
\documentclass[12pt, a4paper]{amsart}%
\usepackage{amscd}
\usepackage{amsmath}
\usepackage{latexsym}
\usepackage{graphicx}
\usepackage{amsfonts}
\usepackage{amssymb}%
\setcounter{MaxMatrixCols}{30}
\providecommand{\U}[1]{\protect\rule{.1in}{.1in}}
\providecommand{\U}[1]{\protect\rule{.1in}{.1in}}
\nonstopmode
\newcounter{fig}

\theoremstyle{plain}
\newtheorem{Theorem}{Theorem}[section]

\newtheorem{Remarks}[Theorem]{Remarks}

\newtheorem{theorem}[Theorem]{Theorem}

\newtheorem{proposition}[Theorem]{Proposition}
\newtheorem{lemma}[Theorem]{Lemma}
\theoremstyle{definition}

\newtheorem{defn}[equation]{Definition}
\newtheorem{remark}[Theorem]{Remark}
\theoremstyle{remark}

\newtheorem{ex}[equation]{Example}

\begin{document}
\title{Twisted bundles and twisted $K$-theory}
\author[ ]{Max~Karoubi}
\address{Universit\'{e} Denis Diderot- Paris 7, UFR de Math\'{e}matiques. Case 7012,
175, rue du Chevaleret. 75205 Paris cedex 13}
\email{max.karoubi@gmail.com}
\thanks{}
\date{23 October 2010}
\maketitle







\pagestyle{myheadings}
\setcounter{section}{-1}%

\section{Introduction}

Many papers have been devoted recently to twisted $K$-theory as originally
defined in \cite{DK} and \cite{Rosenberg}. See for instance the references
\cite{Atiyah and Segal}, \cite{Karoubi} and the very accessible paper
\cite{Schochet}. We offer here a more direct approach based on the notion of
\textquotedblleft twisted vector bundles\textquotedblright. This is not an
entirely new idea, since we find it in \cite{Gorokhovsky}, \cite{Carey},
\cite{Breen Messing}, \cite{Tsygan} and \cite{Caldararu} for instance, under
different names and from various viewpoints. However, a careful look at this
notion shows that we may interpret such bundles as modules over suitable
algebra bundles. More precisely, the category of twisted vector bundles is
equivalent to the category of vector bundles which are modules over algebra
bundles with fibre $\mathrm{End}(V),$ where $V$ is a finite dimensional vector
space. This notion was first explored in \cite{DK} in order to define twisted
$K$-theory. In the same vein, twisted Hilbert bundles may be used to define
extended twisted $K$-groups, following \cite{Dixmier Douady} and
\cite{Rosenberg}.

More generally, we also analyse the notion of \textquotedblleft twisted
principal bundles\textquotedblright\ with structural group $G$. Under
favourable circumstances, we show that the associated category is equivalent
to the category of locally trivial fibrations, with an action of a bundle of
groups with fibre $G,$ which is simply transitive on each fibre. Such bundles
are classically called \textquotedblleft torsors\textquotedblright\ in the
literature. When the bundle of groups is trivial, we recover the usual notion
of principal $G$-bundle.\ 

As is well known, twisted $K$-theory is a graded group, indexed essentially by
the third cohomology\footnote{More precisely, it is indexed by 3-cocycles. Two
cohomologous cocycles give twisted $K$-groups which are isomorphic (non
canonically). This technical point is discussed in Appendix 8.3.} of the base
space $X$, namely $H^{3}(X;\mathbb{Z}).$ The twisted vector bundles we define
in this paper are also indexed by elements of the same group up to
isomorphism. Roughly speaking, twisted $K$-theory appears as the Grothendieck
group of the category of twisted vector bundles. This provides a geometric
description of this theory, very close in spirit to Steenrod's definition of
coordinate bundles\ \cite{Steenrod}. The more subtle notion of \underline
{graded} twisted $K$-theory, indexed by $H^{1}(X;\mathbb{Z}/2)\times
H^{3}(X;\mathbb{Z})$, may also be analyzed in this framework.

The usual operations on vector bundles (exterior powers, Adams operations...)
are easily extended to twisted vector bundles, in a way parallel to the
operations defined in \cite{Atiyah and Segal}. We have also added a section on
cup-products, in order to show that the various ways to define them coincide
up to isomorphism. This is essentially relevant in the last section of the
paper, where we define an analog of the Chern character.

In this section, we define connections on twisted vector bundles in the finite
and infinite dimensional cases, very much in the spirit of \cite[pg.
78]{Kobayashi-Nomizy}, \cite{Carey}, \cite[Chapitre 1]{Karoubi CRAS}, in a
quite elementary way. It is also described in \cite{Gorokhovsky} and
\cite{Carey-Wang2} with a different method. From this analog of Chern-Weil
theory, we deduce a \textquotedblleft Chern character\textquotedblright\ from
twisted $K$-theory to twisted cohomology. This character is defined in a much
more elaborate way in \cite{Atiyah and Segal 2}, \cite{Tsygan}, \cite{Mathai
Stevenson} and \cite{Tu et Xu} in the general framework of the
\textquotedblleft Connes-Karoubi Chern character\textquotedblright%
\ \cite{Connes}, \cite{Karoubi CRAS}, except in \cite{Atiyah and Segal 2}. In
the paper of Atiyah and Segal \cite{Atiyah and Segal 2}, classical topology
tools are used to show that the twisted Chern character is essentially unique.
Therefore, it coincides with the character defined by our elementary approach
in this paper.

Finally, in a detailed appendix divided into three subsections, we study
carefully the relation between \v{C}ech cohomology with coefficients in
$S^{1}$ and de Rham cohomology. We also discuss more deeply multiplicative
structures and the functorial aspects of twisted $K$-theory and of the Chern character.

\textbf{Aknowledgments}. We thank very much A. Carey, A. Gorokhovsky, J.
Rosenberg, and especially L Breen, C. Schochet and Bai-Ling Wang for their
very relevant comments on preliminary versions of this paper.

\medskip

\textbf{CONTENTS}

1) Twisted principal bundles

2) Relation with torsors

3) Twisted vector bundles

4) Various definitions of twisted $K$-theory

5) Multiplicative structures

6) Thom isomorphism and operations in twisted $K$-theory

7) Connections and the Chern homomorphism

8) Appendix

8.1. Relation between \v{C}ech cohomology with coefficients in $S^{1}$ and de
Rham cohomology

8.2. Some key isomorphisms between various definitions of twisted $K$-groups

8.3. Functoriality of twisted $K$-theory and of the Chern character.

\section{Twisted principal bundles}

Let $G$ be a topological group and let $\mathcal{U=}(U_{i}),i\in I,$ be an
open covering of a topological space $X.$ The \v{C}ech cohomology set
$H^{1}(\mathcal{U};G)$ is well known (see \cite{Steenrod}, \cite{Hirzebruch}
for instance). One starts with \textquotedblleft non abelian\textquotedblright%
\ 1-cocycles $g$, i.e. a set of continuous maps (also called \textquotedblleft
transition functions\textquotedblright)%
\[
g_{ji}:U_{i}\cap U_{j}\longrightarrow G,
\]
such that $g_{kj}\cdot g_{ji}=g_{ki}$ over $U_{i}\cap U_{j}\cap U_{k}.$ Two
cocycles $g$ and $h$ are equivalent if there are continuous maps%
\[
u_{i}:U_{i}\longrightarrow G,
\]
such that%
\[
u_{j}\cdot g_{ji}=h_{ji}\cdot u_{i}.\qquad( 1)\label{morphisms}%
\]
The set of equivalence classes is denoted by $H^{1}(\mathcal{U};G).$ A
covering $\mathcal{V=}(V_{s}),s\in S,$ is a refinement of $\mathcal{U}$ if
there is a map $\tau:S\longrightarrow I$ such that $V_{s}\subset U_{\tau(s)}.
$ We then have a \textquotedblleft restriction map\textquotedblright%
\[
R_{\tau}:H^{1}(\mathcal{U};G)\longrightarrow H^{1}(\mathcal{V};G),
\]
assigning to the $g^{\prime}s$ the functions $k=\tau^{\ast}(g)$ defined by%
\[
k_{s,r}=g_{\tau(s),\tau(r)}.
\]
It is shown in \cite[pg. 48]{Hirzebruch} for instance that the map $R_{\tau}$
is in fact independent of the choice of $\tau.$ We then define%
\[
H^{1}(X;G)=\,\underset{\mathcal{U}}{\text{\textrm{Colim}}}\text{\textrm{\ }%
}H^{1}(\mathcal{U};G),
\]
where $\mathcal{U}$ runs over the \textquotedblleft set\textquotedblright\ of
coverings of $X.$

\medskip

Now let $Z$ be a subgroup of the centre of $G$ and let $\lambda=(\lambda
_{kji})$ be a completely normalized $2$-cocycle of $\mathcal{U}$ with values
in $Z.$ This means that $\lambda=1$ if two of the three indices $k,j,i$ are
equal and that%
\[
\lambda_{\sigma(k)\sigma(j)\sigma(i)}=(\lambda_{kji})^{\varepsilon(\sigma)},
\]
where $\sigma$ is a permutation of the indices $(k,j,i),$ with signature
$\varepsilon(\sigma)$.

\begin{remark}
One can prove (see \cite{Karoubi symetric} for instance) that a \v{C}ech
cocycle in any dimension is cohomologous to a completely normalized one.
Moreover, if every open subset of $X$ is paracompact, any cohomology class may
be represented by a completely normalized \v{C}ech cocycle.
\end{remark}

A $\lambda$-twisted $1$-cocycle (simply called twisted cocycle if $\lambda$ is
implicit) is then given by transition functions $g=(g_{ji})$ as above, such
that%
\[
g_{ii}=1,g_{ji}=(g_{ij})^{-1}%
\]
and
\[
g_{kj}\cdot g_{ji}=g_{ki}\cdot\lambda_{kji}%
\]
over $U_{i}\cap U_{j}\cap U_{k}.$ If we compute the product $g_{lk}\cdot
g_{kj}\cdot g_{ji}$ in two different ways using associativity, we indeed find
that $\lambda$ should be a $2$-cocycle. On the other hand, one can easily show
that the function $g_{ij}\cdot g_{jk}\cdot g_{ki}$ is invariant under a
circular permutation of the indices and is changed to its inverse if we
permute $i$ and $k.$ Since we have $\lambda_{kjk}=1,$ the cocycle $\lambda$
should be completely normalized.

Two twisted cocycles $g$ and $h$ are equivalent if there are continuous maps
$u_{i}:U_{i}\longrightarrow G,$ such that we have a condition analogous to the
above
\[
u_{j}\cdot g_{ji}=h_{ji}\cdot u_{i}\text{ }\qquad(\ref{morphisms})
\]
We define the twisted (non abelian) cohomology $H_{\lambda}^{1}(\mathcal{U}%
;G)$ as the set of equivalence classes.

\begin{proposition}
Let $\mu$ be a $2$-cocycle cohomologous to $\lambda$, i.e. such that we have
the relation%
\[
\mu_{kji}=\lambda_{kji}\cdot\eta_{ji}\cdot\eta_{ki}^{-1}\cdot\eta_{kj},
\]
for some $\eta=(\eta_{ji})$ with $\eta_{ji}=(\eta_{ij})^{-1}$ and $\eta
_{ii}=1.$ Then the map%
\[
\Theta:H_{\lambda}^{1}(\mathcal{U};G)\longrightarrow H_{\mu}^{1}%
(\mathcal{U};G),
\]
sending $(g)$ to the twisted cocycle $(g^{\prime})$ given by $g_{ji}^{\prime
}=g_{ji}\cdot\eta_{ji}$, is an isomorphism.
\end{proposition}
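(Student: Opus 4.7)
The plan is to verify three things: that $\Theta$ sends $\lambda$-twisted cocycles to $\mu$-twisted cocycles, that it descends to the equivalence relation (1), and that it admits an explicit two-sided inverse obtained from the symmetric construction. All the arguments are straightforward computations; the only essential ingredient is that $\eta_{ji}$ must take values in the central subgroup $Z$ (which is forced, since $\lambda$ and $\mu$ are $Z$-valued), so that each $\eta_{ji}$ commutes with every element of $G$.

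First I would verify well-definedness on cocycles. The normalizations $g'_{ii}=1$ and $g'_{ji}=(g'_{ij})^{-1}$ are immediate from $\eta_{ii}=1$ and $\eta_{ji}=\eta_{ij}^{-1}$. For the twisted cocycle identity, using centrality of $\eta$ freely, I would compute
\[ g'_{kj}\cdot g'_{ji} \;=\; g_{kj}\eta_{kj}\cdot g_{ji}\eta_{ji} \;=\; g_{kj}g_{ji}\cdot\eta_{kj}\eta_{ji} \;=\; g_{ki}\cdot\lambda_{kji}\eta_{kj}\eta_{ji}, \]
and on the other hand
\[ g'_{ki}\cdot\mu_{kji} \;=\; g_{ki}\eta_{ki}\cdot\lambda_{kji}\eta_{ji}\eta_{ki}^{-1}\eta_{kj} \;=\; g_{ki}\cdot\lambda_{kji}\eta_{ji}\eta_{kj}, \]
after cancelling $\eta_{ki}\eta_{ki}^{-1}$ in $Z$. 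The two expressions agree because $\eta_{kj}$ and $\eta_{ji}$ lie in the abelian group $Z$.

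Next I would show that $\Theta$ preserves the equivalence (1). If $(u_i)$ realizes an equivalence $u_j g_{ji}=h_{ji}u_i$, then
\[ u_j g'_{ji} \;=\; u_j g_{ji}\eta_{ji} \;=\; h_{ji}u_i\eta_{ji} \;=\; h_{ji}\eta_{ji}u_i \;=\; h'_{ji}u_i, \]
so the same family $(u_i)$ witnesses the equivalence between $\Theta(g)$ and $\Theta(h)$.

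Finally, to produce an inverse I would rewrite the cohomology relation as $\lambda_{kji}=\mu_{kji}\cdot\eta_{ji}^{-1}\cdot\eta_{ki}\cdot\eta_{kj}^{-1}$, which is of the same form with $\lambda,\mu$ swapped and $\eta$ replaced by $\eta^{-1}$ (this family still satisfies $\eta^{-1}_{ii}=1$ and $\eta^{-1}_{ji}=(\eta^{-1}_{ij})^{-1}$). The preceding construction therefore yields a map $\Theta':H_\mu^{1}(\mathcal{U};G)\to H_\lambda^{1}(\mathcal{U};G)$ sending $(g')$ to $(g'_{ji}\eta_{ji}^{-1})$, and $\Theta'\circ\Theta$, $\Theta\circ\Theta'$ are visibly the identity. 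There is no real obstacle; the only point to watch is that centrality of $\eta$ is invoked whenever an $\eta$ must be moved past a $g$ or a $u$, which is precisely the reason the statement is formulated for $Z\subset Z(G)$.
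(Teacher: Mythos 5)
Your proof is correct and takes essentially the same approach as the paper: the same centrality computation verifies the twisted cocycle identity, and the inverse is the same map $(g'_{ji})\mapsto(g'_{ji}\eta_{ji}^{-1})$. One small expository quibble: the fact that $\eta$ is $Z$-valued is part of the hypothesis (what ``cohomologous'' means for $Z$-valued cocycles), not a consequence of $\lambda$ and $\mu$ being $Z$-valued, since only the coboundary $\eta_{ji}\eta_{ki}^{-1}\eta_{kj}$ is forced into $Z$ by that alone.
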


\begin{proof}
If we compute $g_{kj}^{\prime}\cdot g_{ji}^{\prime}$ we indeed find%
\[
g_{kj}^{\prime}\cdot g_{ji}^{\prime}=g_{ki}^{\prime}\cdot\lambda_{kji}%
\cdot\eta_{kj}\cdot\eta_{ji}\cdot(\eta_{ki})^{-1}=g_{ki}^{\prime}\cdot
\mu_{kji}.
\]
This shows that the map $\Theta$ is well defined. The inverse map is of course
given by the correspondence $(g_{ji}^{\prime})\longmapsto(g_{ji}^{\prime}%
\cdot\eta_{ji}^{-1}).$
\end{proof}

\smallskip

From the previous considerations one may define the following category. The
objects are $\lambda$-twisted bundles on a covering $\mathcal{U}$, the
morphisms between $(g_{ji})$ and $(h_{ji})$ being continuous maps $(u_{i})$,
with the compatibility condition (\ref{morphisms}). In this category the
covering $\mathcal{U}$ is fixed together with the $2$-cocycle $\lambda$.

However, this category is too rigid for our purposes, since we want to
consider covering refinements. The covering $\mathcal{V=}$ $(V_{s}),s\in S,$
is a refinement of $\mathcal{U}$ $=(U_{i}),i\in I$ if there is a map
$\tau:S\longrightarrow I$ such that $V_{s}\subset U_{\tau(s)}.$This map $\tau$
induces a morphism%
\[
\Theta_{\tau}:H_{\lambda}^{1}(\mathcal{U};G)\longrightarrow H_{\mu}%
^{1}(\mathcal{V};G)
\]
which is not necessarily an isomorphism. Starting with a twisted cocycle
$(g_{ji})$, its image by $\Theta_{\tau}$ is the cocycle $(h_{sr})$ given by
the formula%
\[
h_{sr}=g_{\tau(s)\tau(r)}.
\]
The $2$-cocycle associated to $h$ is
\[
\mu_{tsr}=g_{\tau(t)\tau(s)}\cdot g_{\tau(s)\tau(r)}\cdot g_{\tau(r)\tau
(t)}=\lambda_{\tau(t)\tau(s)\tau(r)}.
\]

\begin{proposition}
Let $\tau$ and $\tau^{\prime}$ be two maps from $S$ to $I$ such that
$V_{s}\subset U_{\tau(s)}$ and $V_{s}\subset U_{\tau^{\prime}(s)}$ and let $x$
be an element of the set $H_{\lambda}^{1}(\mathcal{U};G).$ Then $\Theta_{\tau
}(x)$ and $\Theta_{\tau^{\prime}}(x)$ are related through an isomorphism%
\[
H_{\mu}^{1}(\mathcal{V};G)\cong H_{\mu^{\prime}}^{1}(\mathcal{V};G),
\]
made explicit in the proof below. This isomorphism does not depend on $x$ and
depends only on $\tau,\tau^{\prime}$ and the $2$-cocycle $\lambda.$
\end{proposition}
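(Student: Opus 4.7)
The plan is to produce an explicit $Z$-valued $1$-cochain $\eta$ on $\mathcal{V}$, built only from $\lambda,\tau,\tau'$, that makes $\mu=\tau^{\ast}\lambda$ and $\mu'=(\tau')^{\ast}\lambda$ cohomologous as $Z$-valued $2$-cocycles, and then apply Proposition 1.2. The desired isomorphism will be the one attached to $\eta$, and we will recognize $\Theta_{\tau'}(x)$ as its image of $\Theta_{\tau}(x)$.

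Fix a $\lambda$-twisted cocycle $g=(g_{ji})$ representing $x$, and put $h_{sr}=g_{\tau(s)\tau(r)}$, $h'_{sr}=g_{\tau'(s)\tau'(r)}$. On $V_{s}\cap V_{r}$ all four indices $\tau(s),\tau'(s),\tau(r),\tau'(r)$ of $\mathcal{U}$ are defined. I would apply the twisted cocycle relation for $g$ to the two triples $(\tau(s),\tau'(s),\tau'(r))$ and $(\tau(s),\tau(r),\tau'(r))$: each expresses $g_{\tau(s)\tau'(r)}$ in two ways, giving
\[
g_{\tau(s)\tau'(s)}\cdot g_{\tau'(s)\tau'(r)}\cdot\lambda_{\tau(s)\tau'(s)\tau'(r)}^{-1}
=g_{\tau(s)\tau(r)}\cdot g_{\tau(r)\tau'(r)}\cdot\lambda_{\tau(s)\tau(r)\tau'(r)}^{-1}.
\]
Set $u_{s}=g_{\tau(s)\tau'(s)}\in G$ on $V_{s}$ (this is legitimate since $V_{s}\subset U_{\tau(s)}\cap U_{\tau'(s)}$), and
\[
\eta_{sr}=\lambda_{\tau(s)\tau(r)\tau'(r)}^{-1}\cdot\lambda_{\tau(s)\tau'(s)\tau'(r)}\in Z.
\]
Because $\eta_{sr}$ is central, the identity above rearranges to
\[
u_{s}\cdot h'_{sr}=(h_{sr}\cdot\eta_{sr})\cdot u_{r},
\]
which is precisely the equivalence relation (1) witnessing $[h\cdot\eta]=[h']$ in $H_{\mu'}^{1}(\mathcal{V};G)$.

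It then remains to check that $\eta=(\eta_{sr})$ is a $Z$-valued $1$-cochain with
\[
\mu'_{tsr}=\mu_{tsr}\cdot\eta_{sr}\cdot\eta_{tr}^{-1}\cdot\eta_{ts}.
\]
This is a combinatorial identity in the abelian group $Z$ obtained by writing out each $\eta$ in terms of $\lambda$ and invoking the $2$-cocycle condition $(\delta\lambda)_{abcd}=1$ for suitable quadruples drawn from $\{\tau(t),\tau(s),\tau(r),\tau'(t),\tau'(s),\tau'(r)\}$; this is the step where one recognizes the classical prism/chain-homotopy formula for two simplicial maps that agree up to $V_{s}\subset U_{\tau(s)}\cap U_{\tau'(s)}$. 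Granting this, Proposition 1.2 yields the isomorphism
\[
\Theta_{\eta}:H_{\mu}^{1}(\mathcal{V};G)\longrightarrow H_{\mu'}^{1}(\mathcal{V};G),\qquad [h]\longmapsto[h\cdot\eta],
\]
which depends only on $\eta$ and hence only on $\tau,\tau',\lambda$. Combined with the equivalence established above, this gives $\Theta_{\eta}(\Theta_{\tau}(x))=\Theta_{\tau'}(x)$ for every $x$, and independence of $x$ is automatic from the construction of $\eta$.

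The main obstacle I anticipate is the bookkeeping in the middle verification: checking the coboundary formula $\mu'/\mu=\delta\eta$ requires carefully choosing and expanding the normalized $2$-cocycle identity for $\lambda$ on several quadruples, and using the symmetry properties $\lambda_{\sigma(k)\sigma(j)\sigma(i)}=\lambda_{kji}^{\varepsilon(\sigma)}$ from the complete normalization hypothesis to line up terms. Once that identity is in place, the rest of the proof is a direct application of Proposition 1.2 together with the computation above.
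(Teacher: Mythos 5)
Your approach is essentially the paper's, condensed: you introduce $u_s=g_{\tau(s)\tau'(s)}$, extract the central $1$-cochain $\eta_{sr}=\lambda_{\tau(s)\tau(r)\tau'(r)}^{-1}\cdot\lambda_{\tau(s)\tau'(s)\tau'(r)}$, and invoke Proposition~1.2. In fact your $\eta$ coincides with the paper's $\sigma_{sr}=\lambda_{\tau'(s)\tau(r)\tau(s)}\cdot\lambda_{\tau'(s)\tau'(r)\tau(r)}$ (apply the $2$-cocycle identity for $\lambda$ on the quadruple $(\tau(s),\tau'(s),\tau(r),\tau'(r))$ together with complete normalization). The paper merely separates the step into an intermediate cocycle $\overline{h}_{sr}=g_{\tau'(s)\tau(s)}\cdot h_{sr}\cdot g_{\tau(r)\tau'(r)}$ of twist $\mu$, and then applies Proposition~1.2 between $\overline{h}$ and $h'$; you fold these two steps into one.

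The one place you wave your hands is the verification that $\mu'_{tsr}=\mu_{tsr}\cdot\eta_{sr}\cdot\eta_{tr}^{-1}\cdot\eta_{ts}$, which you defer to ``the prism formula.'' You do not actually need to invoke that machinery: your own identity $u_s\cdot h'_{sr}=(h_{sr}\cdot\eta_{sr})\cdot u_r$ already hands you the relation. Take the cyclic triple product over $(t,s),(s,r),(r,t)$; the $u$'s telescope since $\mu'$ is central, and on the other side $\eta$ pulls out to give $\mu_{tsr}\cdot\eta_{ts}\eta_{sr}\eta_{rt}$. (This is precisely what the paper does via $\overline{h}_{ts}\overline{h}_{sr}\overline{h}_{rt}$.) You should also confirm $\eta_{sr}^{-1}=\eta_{rs}$ and $\eta_{ss}=1$ so that Proposition~1.2 applies; the second is immediate from normalization, and the first follows in the same style. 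Finally, note that this derivation of the coboundary relation for $\eta$ in terms of $\lambda$ alone goes through the $1$-cocycle $g$ representing $x$; the paper's Remark~1.4 addresses how to bypass $g$ if one wants the identity purely as a statement about $\lambda$, which is only relevant to the last sentence of the proposition and is not needed once $x$ is given.
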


\begin{proof}
Let $h^{\prime}$ be the following transition functions%
\[
h_{sr}^{\prime}=g_{\tau^{\prime}(s)\tau^{\prime}(r)}.
\]
We may write%
\[
h_{sr}^{\prime}=g_{\tau^{\prime}(s)\tau(s)}\cdot h_{sr}\cdot g_{\tau
(r)\tau^{\prime}(r)}\cdot\sigma_{sr},
\]
where
\[
\sigma_{sr}=\lambda_{\tau^{\prime}(s)\tau(r)\tau(s)}\cdot\lambda_{\tau
^{\prime}(s)\tau^{\prime}(r)\tau(r)}.
\]

Since we have $h_{rs}=(h_{sr})^{-1},h_{rr}=1$ and the same properties for
$h^{\prime},$ it follows that $\sigma_{rs}=(\sigma_{sr})^{-1}$ and
$\sigma_{rr}=1.$ Therefore,

1) the twisted 1-cocycles $(h_{sr})$ and $(\overline{h}_{sr}),$ where
\[
\overline{h}_{sr}=g_{\tau^{\prime}(s)\tau(s)}\cdot h_{sr}\cdot g_{\tau
(r)\tau^{\prime}(r)},
\]
are isomorphic in the category of twisted bundles over $\mathcal{V}$ with the
same twist.

2) the twisted bundles defined by the 1-cocycles $(\overline{h}_{sr})$ and
$(h_{sr}^{\prime})$ are also isomorphic through the isomorphism
\[
H_{\mu}^{1}(\mathcal{V};G)\cong H_{\mu^{\prime}}^{1}(\mathcal{V};G)
\]
defined in the previous proposition.

We note that $\mu^{\prime}$ is the following $2$-cocycle with values in $Z$%
\[
\mu_{tsr}^{\prime}=\overline{h}_{ts}\cdot\overline{h}_{sr}.\overline{h}%
_{rt}=h_{ts}\cdot h_{sr}\cdot h_{rt}\cdot\sigma_{ts}\cdot\sigma_{sr}%
\cdot\sigma_{rt}=\mu_{tsr}\cdot\sigma_{ts}\cdot\sigma_{sr}\cdot\sigma_{rt},
\]
which is of course cohomologous to $\mu.$

It remains to show that the isomorphism
\[
H_{\mu}^{1}(\mathcal{V};G)\cong H_{\mu^{\prime}}^{1}(\mathcal{V};G)
\]
depends only of $\tau$ and $\tau^{\prime}$ and not of the specific element
$x.$ The previous identity shows indeed that the $2$-cocycles $\mu$ and
$\mu^{\prime}$ are cohomologous through the completely normalized 1-cochain
$\sigma$ which is a function of $\lambda$ only.
\end{proof}

\begin{remark}
Although we don't need it in the proof, this computation showing that $\mu$
and $\mu^{\prime}$ are cohomologous is based on the existence of a twisted
$1$-cocycle $(g_{ji})$ associated to a $2$-completely normalized cocycle
$\lambda.$ Unfortunately, this is not true in general. However, when $X$ has
the homotopy type of a CW-complex, we may also argue as follows in greater
generality. First me may assume that $X$ is pathwise connected, so that we can
choose a base point on $X.$ Now let $PX$ be the path space of $X$ and let
\[
\pi:PX\rightarrow X
\]
be the canonical map associating to a path starting at the base point its end
point. In order to check that $\sigma_{ts}\cdot\sigma_{sr}\cdot\sigma_{rt}%
=\mu_{tsr}^{\prime}\cdot(\mu_{tsr})^{-1},$ we consider the covering of $PX$
defined by the pull-back $\pi^{\ast}(\mathcal{U)}$ of the covering
$\mathcal{U}$ of $X.$ Since the nerve of $\pi^{\ast}(\mathcal{U)}$ is
contractible, there is a completely normalized 1-cochain $\overline{g}$, with
values in the subgroup $Z$ of $G,$ such that $\lambda_{tsr}$ is the associated
twist, i.e. its coboundary. This enables us to perform the previous
computations on $PX$ (with $\overline{g}$ as our 1-twisted cocycle) and hence
on $X,$ since the pull-back of functions from $X$ to $PX$ by the map $\pi$ is injective.
\end{remark}

\section{Relation with torsors}

\medskip

There is another interpretation of twisted principal bundles in some
favourable circumstances and which is more familiar. For this, we observe that
$G$ acts on itself by inner automorphisms and that the kernel of the map%
\[
G\rightarrow Aut(G)
\]
is the centre of $G.$ We now assume that the induced map $G/Z\rightarrow
Aut(G)$ is a homeomorphism on its image for the quotient topology and that the
map%
\[
G\rightarrow G/Z
\]
is a locally trivial fibration. In the applications we have in mind, $G$ is a
Lie group or a Banach Lie group and it is well known that these conditions are
fulfilled if $Z$ is a closed subgroup of the centre.

On the other hand, we notice that if $P$ is a twisted principal bundle
associated to a covering $\mathcal{U}$ with transition functions $g_{ji},$ we
may define a bundle of groups $\mathrm{AUT}(P)$ as follows. Its transition
functions are defined over $U_{i}\cap U_{j}$ by%
\[
g\longmapsto g_{ji}\cdot g\cdot(g_{ji})^{-1}=g_{ji}\cdot g\cdot g_{ij}.
\]

\begin{proposition}
\label{AUT}Let $\widetilde{G}$ be a bundle of groups with fibre $G$ and with
structural group $G/Z,$ acting by inner automorphisms on $G$. Then, if the
covering $\mathcal{U}\mathbb{\ }=(U_{i})$ is fine enough, there is a twisted
principal bundle $P$ such that $\widetilde{G}$ is isomorphic to the bundle of
groups $\mathrm{AUT}(P)$ defined above.
\end{proposition}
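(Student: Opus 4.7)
The plan is to start from the transition data of $\widetilde{G}$ and lift them through the fibration $G \to G/Z$. Since $\widetilde{G}$ has structural group $G/Z$ acting by inner automorphisms, after possibly refining the covering $\mathcal{U}$ we can represent $\widetilde{G}$ by transition functions $\alpha_{ji}: U_i \cap U_j \to G/Z$ satisfying the strict cocycle condition $\alpha_{kj}\cdot\alpha_{ji}=\alpha_{ki}$ over triple intersections. Using the hypothesis that $G \to G/Z$ is a locally trivial fibration, and assuming the $U_i$ (and their finite intersections) are small enough to admit a section of this fibration over each $\alpha_{ji}$, I lift each $\alpha_{ji}$ to a continuous map $g_{ji}: U_i \cap U_j \to G$. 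By adjusting lifts pointwise by elements of $Z$ (the fiber of $G \to G/Z$), I normalize so that $g_{ii}=1$ and $g_{ij}=(g_{ji})^{-1}$.

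Next I would define
\[
\lambda_{kji} = g_{kj}\cdot g_{ji}\cdot (g_{ki})^{-1}
\]
over $U_i \cap U_j \cap U_k$. Since each side projects to $\alpha_{kj}\cdot\alpha_{ji}\cdot(\alpha_{ki})^{-1}=1$ in $G/Z$, the value $\lambda_{kji}$ lies in $Z$. A routine associativity computation (identical to the one preceding the definition of twisted cocycles in Section~1) shows that $\lambda$ is a $2$-cocycle with values in $Z$, while the normalizations $g_{ii}=1$ and $g_{ij}=(g_{ji})^{-1}$ force $\lambda$ to be completely normalized. Thus $(g_{ji},\lambda_{kji})$ is precisely the data of a $\lambda$-twisted principal $G$-bundle $P$ in the sense of Section~1.

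Finally I need to identify $\mathrm{AUT}(P)$ with $\widetilde{G}$. By the definition recalled just before the proposition, the bundle of groups $\mathrm{AUT}(P)$ has fibre $G$ and transition functions $g \longmapsto g_{ji}\cdot g\cdot (g_{ji})^{-1}$, which are inner automorphisms of $G$ determined by the images $\alpha_{ji}$ of $g_{ji}$ in $G/Z$. Since these are exactly the transition functions of $\widetilde{G}$, the identity map on fibres glues to a global isomorphism $\widetilde{G}\cong \mathrm{AUT}(P)$. The main obstacle in this plan is ensuring that the covering is fine enough to allow the simultaneous normalized lifts $g_{ji}$ of $\alpha_{ji}$; this is where the hypothesis that $G\to G/Z$ is a locally trivial fibration (together with the freedom to refine $\mathcal{U}$) is essential, and where the qualification "if the covering is fine enough" in the statement enters the proof.
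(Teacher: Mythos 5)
Your proof follows essentially the same path as the paper's: lift the $G/Z$-valued transition functions of $\widetilde{G}$ to normalized $G$-valued functions $g_{ji}$ using local triviality of $G\to G/Z$ on a sufficiently fine covering, observe that $\lambda_{kji}=g_{kj}g_{ji}g_{ik}$ lands in $Z$ and is a completely normalized $2$-cocycle, and note that $\mathrm{AUT}(P)$ for the resulting twisted bundle $P$ has the same inner-automorphism transition functions as $\widetilde{G}$. Your argument is correct and matches the paper's, with slightly more detail on why $\lambda_{kji}$ lies in $Z$.
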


\begin{proof}
The bundle of groups $\widetilde{G}$ is given by transition functions%
\[
\gamma_{ji}:U_{i}\cap U_{j}\rightarrow G/Z,
\]
where we may assume without loss of generality that%
\[
\gamma_{ii}=Id\text{ }\quad\text{and\quad}\gamma_{ij}=(\gamma_{ji})^{-1}.
\]
According to our assumptions, the fibration $G\rightarrow G/Z$ is locally
trivial. Therefore, if the covering $\mathcal{U}$ is fine enough, we can find
continuous functions%
\[
g_{ji}:U_{i}\cap U_{j}\rightarrow G
\]
such that the class of $g_{ji}$ is $\gamma_{ji}$, and moreover $g_{ii}=Id$,
$g_{ij}=(g_{ji})^{-1}.$ From these identities, it follows that the following
continuous function defined on $U_{i}\cap U_{j}\cap U_{k}$%
\[
\lambda_{kji}=g_{kj}\cdot g_{ji}\cdot g_{ik}%
\]
is a completely normalized $2$-cocycle with values in $Z.$ Therefore, it
defines a twisted principal bundle $P$ with transition functions $(g_{ji}).$
Moreover, according to the previous considerations, the bundle of groups
$\widetilde{G}$ is canonically isomorphic to $\mathrm{AUT}(P),$ with
transition functions%
\[
u\longmapsto\overline{g}_{ji}(u)=g_{ji}\cdot u\cdot g_{ij}.
\]

\end{proof}

This proposition enables us to relate the category of twisted principal
bundles to more classical mathematical objects. We notice that if $P$ and $Q$
are twisted principal bundles with transition functions $g_{ji}$ and $h_{ji}$
respectively (with the same twist $\lambda),$ we can define a locally trivial
bundle $\mathrm{ISO}(P,Q)$ with fibre $G,$ the transition functions being
automorphisms of the underlying space $G$ defined by%
\[
u\longmapsto h_{ji}\cdot u\cdot g_{ij}=\theta_{ji}(u).
\]
Since we have $g_{kj}\cdot g_{ji}\cdot g_{ik}=h_{kj}\cdot h_{ji}\cdot
h_{ik}=\lambda_{kji},$ the 1-cocycle condition is satisfied for the bundle
$\mathrm{ISO}(P,Q),$ i.e. we have the relation%
\[
\theta_{kj}\cdot\theta_{ji}=\theta_{ki}.
\]
In particular, if $P=Q,$ we get the previous bundle of groups $\mathrm{AUT}%
(P).$

\bigskip Moreover, there is a bundle map%
\[
\mathrm{ISO}(P,Q)\times\mathrm{AUT}(P)\rightarrow\mathrm{ISO}(P,Q).
\]
It is defined by
\[
(u,v)\longmapsto u\circ v,
\]
or by $(u_{i},v_{i})\mapsto u_{i}\circ v_{i}$ in local coordinates. Therefore,
the bundle $\mathrm{ISO}(P,Q)$ inherits a right fibrewise $\mathrm{AUT}%
(P)$-action which is simply transitive on each fibre. In classical
terminology\footnote{It is not the purpose of this paper to develop the theory
of torsors. Roughly speaking, this notion is a generalization of the
definition of a principal bundle $P$. Instead of having a topological group
$G$ acting on $P$ as usual, we have a bundle of groups $\widetilde{G}$ acting
fiberwise on $P$ in a way which is simply transitive on each fiber. In our
situation, the structural group of $\widetilde{G}$ is $G/Z,$ acting on $G$ by
inner automorphisms.}, the bundle $\mathrm{ISO}(P,Q)$ is a
\textrm{\textquotedblleft}torsor\textquotedblright\ over the bundle of groups
$\mathrm{AUT}(P),$ acting on the right.

\begin{theorem}
Let $\widetilde{G}$ be a bundle of groups with fibre $G$ and structural group
$G/Z$ acting on $G$ by inner automorphisms. We assume the existence of a
covering $\mathcal{U}=(U_{i})$ such that $\widetilde{G}$ may be written as
$\mathrm{AUT}(P),$ where $P$ is a $\lambda$-twisted principal bundle. Then,
any torsor $M$ over $\widetilde{G}$ may be written as\textrm{\ }%
$\mathrm{ISO}(P,Q),$ where $Q$ is a $\lambda$-twisted principal bundle. More
precisely, the correspondence $Q\longmapsto\mathrm{ISO}(P,Q)$ induces an
equivalence between the category of $\lambda$-twisted principal bundles and
the category of $\widetilde{G}$-torsors.
\end{theorem}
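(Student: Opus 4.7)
The plan is to exhibit the correspondence $Q\mapsto\mathrm{ISO}(P,Q)$ as a functor between the two categories and prove it is both essentially surjective and fully faithful.

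For essential surjectivity, given a torsor $M$ over $\widetilde{G}$, I would pass to a common refinement of $\mathcal{U}$ over which $M$ is also trivialized (using Proposition 1.5 to transfer $P$ and $\mathrm{AUT}(P)$ to the refined cover without changing the twist class). By simple transitivity of the $\widetilde{G}$-action, the transitions of $M$ are multiplications by sections of $\widetilde{G}$, whose expressions in the local trivializations of $\widetilde{G}$ yield continuous maps $m_{ji}:U_{i}\cap U_{j}\to G$. Setting $h_{ji}=m_{ji}\cdot g_{ji}$ and inserting $g_{jk}\cdot g_{kj}=1$ in the middle of $h_{kj}\cdot h_{ji}$, the $\widetilde{G}$-torsor cocycle relation $m_{kj}\cdot(g_{kj}\cdot m_{ji}\cdot g_{jk})=m_{ki}$ combined with $g_{kj}\cdot g_{ji}=g_{ki}\cdot\lambda_{kji}$ gives
\[
h_{kj}\cdot h_{ji}=m_{ki}\cdot g_{ki}\cdot\lambda_{kji}=h_{ki}\cdot\lambda_{kji},
\]
so $Q=(h_{ji})$ is a $\lambda$-twisted principal bundle, and a direct comparison of transition formulas identifies $\mathrm{ISO}(P,Q)$ with $M$.

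For functoriality and fully faithfulness, a morphism $(u_{i}):Q\to Q'$ in the category of $\lambda$-twisted bundles is characterized by $u_{j}\cdot h_{ji}=h'_{ji}\cdot u_{i}$. Fibrewise left multiplication by $u_{i}$ on $U_{i}\times G$ commutes with the right $\mathrm{AUT}(P)|_{U_{i}}$-action and patches into a torsor morphism, since cancelling the common right factor $g_{ij}$ in the transition-compatibility $\Phi_{j}\circ\theta_{ji}=\theta'_{ji}\circ\Phi_{i}$ produces exactly the same identity. Conversely, any torsor morphism is fibrewise left multiplication by $\mathrm{AUT}(P)$-equivariance, so the same cancellation recovers a family $(u_{i})$ with the required compatibility. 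This establishes the bijection on morphism sets.

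The main technical obstacle is bookkeeping: an element of $\widetilde{G}$ over $U_{i}\cap U_{j}$ has two distinct expressions in the $i$- and $j$-trivializations, related by the inner automorphism $u\mapsto g_{ji}\cdot u\cdot g_{ij}$. Writing out the torsor cocycle requires first conjugating all the $m_{ji}$'s into a common trivialization, and it is precisely this conjugation, combined with the $\lambda$-defect in the composition of the $g_{ji}$'s, that generates the $\lambda$-twist on the $h_{ji}$'s. Once the conventions are fixed, the verifications become straightforward computations.
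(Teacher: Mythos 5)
Your proof is correct and follows essentially the same route as the paper: extracting $m_{ji}=\gamma_{ji}(1)$ from the torsor transitions, setting $h_{ji}=m_{ji}\cdot g_{ji}$, verifying the twisted cocycle relation, and characterizing torsor morphisms as fibrewise left multiplications whose transition-compatibility reduces, after cancelling $g_{ij}$, to the $\lambda$-twisted morphism condition. One small slip: there is no ``Proposition 1.5'' in the paper (Section 1 has only Propositions 1.2 and 1.3), so the citation for the refinement step should be dropped or replaced, though the step itself is sound and in fact addresses a point left implicit in the paper's proof.
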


\begin{proof}
Let $\gamma_{ji}$ be the transition functions of $M$ with fibre $G$ and let
$g_{ji}$ be the transition functions of $P.$ Then the transition functions of
$AUT(P)$ are given by $\overline{g}_{ji}(u)=g_{ji}\cdot u\cdot g_{ij}.$ Now we
claim that the transition functions of $M$ should be of type%
\[
\gamma_{ji}(u)=h_{ji}\cdot u\cdot g_{ij},
\]
for some continuous functions $h_{ji}.$ In order to prove this, we use the
action of $\widetilde{G}$ on the right by writing%
\[
\gamma_{ji}(u)=\gamma_{ji}(1\cdot u)=\gamma_{ji}(1)\cdot\overline{g}%
_{ji}(u)=\gamma_{ji}(1)\cdot g_{ji}.u\cdot g_{ij}.
\]
We then put $h_{ji}=\gamma_{ji}(1)\cdot g_{ji}.$ The fact that $\gamma
_{kj}\cdot\gamma_{ji}=\gamma_{ki}$ implies the identity%
\[
h_{kj}\cdot h_{ji}\cdot u\cdot g_{ij}\cdot g_{jk}=h_{ki}\cdot u\cdot g_{ik}.
\]
Since $g_{ij}\cdot g_{jk}=g_{ik}\cdot\lambda_{ijk},$ this implies that
$h_{kj}.h_{ji}=h_{ki}\cdot(\lambda_{ijk})^{-1}=h_{ki}\cdot\lambda_{kji}$;
therefore the $(h_{ji})$ are the transition functions of a $\lambda$-twisted
principal bundle.

We have to check the coherence of the action of $\widetilde{G}$ on the right,
i.e. the identity%
\[
\gamma_{ji}(u\cdot v)=\gamma_{ji}(u)\cdot\overline{g}_{ji}(v).
\]
This follows from the simple calculation in local coordinates%
\[
\gamma_{ji}(u\cdot v)=h_{ji}\cdot(u\cdot v)\cdot g_{ij}=(h_{ji}\cdot u\cdot
g_{ij})\cdot(g_{ji}\cdot v\cdot g_{ij})=\gamma_{ji}(u)\cdot\overline{g}%
_{ji}(v).
\]

The previous computations show that we can define a functor backwards from the
category of $\widetilde{G}$-torsors to the category of $\lambda$-twisted
principal bundles. It remains to prove that the map%
\[
Hom(Q,Q^{\prime})\rightarrow Hom(\mathrm{ISO}(P,Q),\mathrm{ISO}(P,Q^{\prime}))
\]
is an isomorphism. For this, we analyse the morphisms%
\[
\mathrm{ISO}(P,Q)\rightarrow\mathrm{ISO}(P,Q^{\prime})
\]
which are compatible with the structure of $\mathrm{AUT}(P)$-torsor. Such a
morphism%
\[
\mathrm{ISO}(P,Q)\rightarrow\mathrm{ISO}(P,Q^{\prime})
\]
is given in local coordinates by the formula%
\[
\Phi:u\longmapsto\beta_{i}\cdot u\cdot\alpha_{i},
\]
where $(\alpha_{i})$ (resp. $(\beta_{i}))$ is associated to $\mathrm{AUT}(P)$
(resp. $\mathrm{ISO}(Q,Q^{\prime})).$ We notice the formula%
\[
h_{ji}^{\prime}\cdot\beta_{i}\cdot u\cdot\alpha_{i}\cdot g_{ij}=\beta_{j}\cdot
h_{ji}\cdot u\cdot g_{ij}\cdot\alpha_{j},
\]
where $h_{ji}^{\prime}$ are the coordinate functions of $Q^{\prime}.$ In the
same way, an element of $AUT(P)$ is given in local coordinates by%
\[
\Upsilon:g\longmapsto g\cdot\alpha_{i}.
\]
Therefore, the equation%
\[
\Phi(u\cdot g)=\Phi(u)\cdot\Upsilon(g)
\]
may be written%
\[
\beta_{i}\cdot(u\cdot g)\cdot\alpha_{i}=(\beta_{i}\cdot u\cdot\alpha_{i}%
)\cdot(g\cdot\alpha_{i}),
\]
which is only possible if $\alpha_{i}=1.$
\end{proof}

\begin{remark}
An analog of this theorem in the framework of vector bundles will be proved in
the next section (Theorem \ref{twisted=A-modules}).
\end{remark}

\section{Twisted vector bundles}

One of the main aims of this paper is the theory of \textquotedblleft
twisted\textquotedblright\ vector bundles\footnote{For simplicity's sake, we
shall only consider complex vector bundles. The theory for real or
quaternionic vector bundles follows the same pattern. More generally, we may
also consider vector bundles with fibres finitely generated projective modules
over a Banach algebra. This remark will be useful in the next section for
$\mathcal{A}$-bundles.}. We essentially studied it in Section 1, with the
structural group $G=$ $GL_{n}(\mathbb{C}).$ However, to keep track of the
linear structure and because we want the \textquotedblleft
fibres\textquotedblright\ not to have the same dimension on each connected
component of $X$, we change slightly the general definition as follows.

We start as before with a covering $\mathcal{U}$ $=(U_{i}),i\in I,$ together
with a finite dimensional vector space $E_{i}$ \textquotedblleft
over\textquotedblright\ $U_{i}.$ Another piece of information is a completely
normalized $2$-cocycle $\lambda_{kji}$ with values in $\mathbb{C}^{\times}.$ A
$\lambda$-twisted vector bundle $E$ on $X$ is then defined by transition
functions%
\[
g_{ji}:U_{i}\cap U_{j}\rightarrow\mathrm{Iso}(E_{i},E_{j}),
\]
such that%

\[
g_{ii}=1,g_{ji}=(g_{ij})^{-1}%
\]
and
\[
g_{kj}\cdot g_{ji}=g_{ki}\cdot\lambda_{kji},
\]
as in the previous section. There is however a slight change for the
definition of morphisms from a twisted vector bundle $E$ to another one $F,$
with the same twist $\lambda.$ They are defined as continuous maps%
\[
u_{i}:U_{i}\rightarrow\mathrm{Hom}(E_{i},F_{i}),
\]
such that
\[
u_{j}\cdot g_{ji}=h_{ji}\cdot u_{i}.
\]
The point is that we no longer require the $u_{i}$ to be isomorphisms.

More generally, let $E$ be a $\lambda$-twisted vector bundle on a covering
$\mathcal{U}$ with transition functions $(g_{ji})$ and let $F$ be a $\mu
$-twisted vector bundle on the same covering with transition functions
$(h_{ji})$. We define a $\lambda^{-1}\cdot\mu$-twisted vector bundle in the
following way: over each $U_{i}$ we take as \textquotedblleft
fibre\textquotedblright\ $\mathrm{Hom}(E_{i},F_{i})$ and as transition
functions the isomorphisms%
\[
\mathrm{Hom}(E_{i},F_{i})\rightarrow\mathrm{Hom}(E_{j},F_{j}),
\]
defined by%
\[
\theta_{ji}:f_{i}\longmapsto h_{ji}\circ f_{i}\circ g_{ij}=f_{j}.
\]
We denote this twisted vector bundle by $\mathrm{HOM}(E,F).$ An interesting
case is when $E$ and $F$ are associated to the same $2$-cocycle $\lambda.$
Then $\mathrm{HOM}(E,F)$ is a genuine vector bundle associated to $Hom(E,F)$
by the following proposition.

\begin{proposition}
Let $E$ and $F$ be two $\lambda$-twisted vector bundles. Then the vector space
of morphisms from $E$ to $F,$ i.e. $Hom(E,F),$ may be identified canonically
with the vector space of sections of the vector bundle $\mathrm{HOM}(E,F).$
\end{proposition}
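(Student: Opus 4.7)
The plan is to show that the two sets in question are defined by the same cocycle-type compatibility condition, and then observe that this identification respects the natural vector space structure on both sides.

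First I would unpack the two definitions concretely. A morphism $u \in \mathrm{Hom}(E,F)$ is by definition a family of continuous maps $u_i : U_i \to \mathrm{Hom}(E_i,F_i)$ satisfying the compatibility
\[
u_j \cdot g_{ji} = h_{ji} \cdot u_i \qquad \text{on } U_i \cap U_j.
\]
On the other hand, a section $s$ of the locally trivial bundle $\mathrm{HOM}(E,F)$ is a family of continuous maps $f_i : U_i \to \mathrm{Hom}(E_i,F_i)$ with
\[
f_j = \theta_{ji}(f_i) = h_{ji} \circ f_i \circ g_{ij} \qquad \text{on } U_i \cap U_j.
\]
Because $E$ and $F$ share the same twist $\lambda$, the twisted bundle $\mathrm{HOM}(E,F)$ has twist $\lambda^{-1}\cdot\lambda=1$, so it is a genuine (untwisted) vector bundle and the notion of a global section makes honest sense; I would verify this quickly using that the scalar $\lambda_{kji}$ commutes with the $f_i$, so that $\theta_{kj}\circ\theta_{ji}=\theta_{ki}$.

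Next, the identification: given a morphism $(u_i)$, set $f_i=u_i$. Multiplying the morphism relation $u_j\cdot g_{ji} = h_{ji}\cdot u_i$ on the right by $g_{ij}=(g_{ji})^{-1}$ yields exactly $f_j = h_{ji}\cdot f_i\cdot g_{ij}$, so $(f_i)$ is a section. Conversely, starting from a section $(f_i)$, set $u_i=f_i$; multiplying the section relation on the right by $g_{ji}$ recovers the morphism relation. These two assignments are manifestly inverse to each other, and both are linear in the local data $(u_i)$. Hence we obtain a canonical linear isomorphism
\[
\mathrm{Hom}(E,F) \;\xrightarrow{\;\cong\;}\; \Gamma\bigl(X,\mathrm{HOM}(E,F)\bigr).
\]

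The proof is essentially a bookkeeping exercise, so the only mild obstacle is checking that everything is genuinely well-defined on the level of bundles: namely that $\mathrm{HOM}(E,F)$ really is a locally trivial vector bundle (which requires the $\lambda$-factors to cancel in the cocycle condition for $\theta_{ji}$, using centrality of the scalars in the endomorphism spaces), and that the two compatibility conditions are literally related by a single right-multiplication by $g_{ij}$ rather than by something more subtle. Once these points are made explicit, the canonical isomorphism is immediate.
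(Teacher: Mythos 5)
Your proof is correct and follows essentially the same route as the paper: both reduce the claim to the observation that the section condition $f_j = h_{ji}\circ f_i\circ g_{ij}$ and the morphism condition $u_j\cdot g_{ji} = h_{ji}\cdot u_i$ are the same relation up to right-multiplication by $g_{ij}$. The extra remark you include about the $\lambda$-factors cancelling so that $\mathrm{HOM}(E,F)$ is a genuine vector bundle is a point the paper makes in the paragraph immediately preceding the proposition rather than inside the proof.
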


\begin{proof}
A section of this vector bundle is defined by elements $f_{i}$ of
$Hom(E_{i},F_{i})$ such that%
\[
\theta_{ji}(f_{i})=f_{j}.
\]
This relation is translated as%
\[
h_{ji}\circ f_{i}=f_{j}\circ g_{ji},
\]
which is exactly the definition of morphisms from $E$ to $F.$
\end{proof}

An interesting case of the previous proposition is when $E=F,$ so that
$\mathrm{HOM}(E,E)=\mathrm{END}(E)$ is an algebra bundle A. The following
theorem relates algebra bundles to twisted vector bundles.

\begin{theorem}
\label{A=End(E)}Any algebra bundle $A$ with fibre $\mathrm{End}(V),$ where $V
$ is a finite dimensional vector space of positive dimension, is isomorphic to
some $\mathrm{END}(E),$ where $E$ is a twisted vector bundle on a suitably
fine covering of $X.$
\end{theorem}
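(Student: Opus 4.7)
The plan is to invoke Proposition \ref{AUT} in this linear setting. The decisive fact is that every algebra automorphism of $\mathrm{End}(V)$ is inner (Skolem--Noether), with kernel the scalars; thus the structural group of $A$ is canonically $PGL(V) = GL(V)/\mathbb{C}^{\times}$, acting on $\mathrm{End}(V)$ by conjugation. After choosing a trivialising covering $\mathcal{U} = (U_{i})$, this gives transition functions $\gamma_{ji}: U_{i} \cap U_{j} \to PGL(V)$, which we normalise so that $\gamma_{ii} = 1$ and $\gamma_{ij} = \gamma_{ji}^{-1}$.

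Since $GL(V) \to PGL(V)$ is a locally trivial principal $\mathbb{C}^{\times}$-bundle, after refining $\mathcal{U}$ if necessary I can choose continuous lifts $g_{ji}: U_{i} \cap U_{j} \to GL(V)$ with $g_{ii} = 1$ and $g_{ij} = g_{ji}^{-1}$. Setting $\lambda_{kji} = g_{kj} \cdot g_{ji} \cdot g_{ik}$, the image in $PGL(V)$ is $\gamma_{kj} \gamma_{ji} \gamma_{ik} = 1$, so $\lambda_{kji} \in \mathbb{C}^{\times}$; associativity in $GL(V)$ makes $\lambda$ a $2$-cocycle, and the chosen normalisation of the $g_{ji}$ makes it completely normalised. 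The data $(g_{ji}, \lambda)$ then define a $\lambda$-twisted vector bundle $E$ with $E_{i} = V$, exactly as in the proof of Proposition \ref{AUT}.

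It remains to verify that $\mathrm{END}(E) \cong A$. The bundle $\mathrm{END}(E)$ has fibre $\mathrm{End}(V)$ and transition functions $f \mapsto g_{ji} \circ f \circ g_{ij}$; because the twist $\lambda$ is central, these descend unambiguously to the classes $\gamma_{ji} \in PGL(V)$ acting by conjugation on $\mathrm{End}(V)$, which are precisely the transition functions of $A$. Hence the two algebra bundles are defined by the same $PGL(V)$-cocycle and are canonically isomorphic. The genuinely delicate step is the lifting $\gamma_{ji} \leadsto g_{ji}$: continuous sections of $GL(V) \to PGL(V)$ exist only locally, so one really must refine the covering, and some attention is required to preserve the symmetry $g_{ij} = g_{ji}^{-1}$ simultaneously over all pairs. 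This is the whole technical content of Proposition \ref{AUT} in the present context; once the lifts are in hand, the rest is a mechanical identification of cocycles.
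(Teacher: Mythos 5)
Your proof is correct and follows the same route as the paper's: invoke Skolem--Noether to realise the structural group of $A$ as $PGL(V)$, normalise the transition functions $\gamma_{ji}$, lift them after refinement to $g_{ji}$ in $GL(V)$ using local triviality of $GL(V)\to PGL(V)$, extract the completely normalised $\mathbb{C}^{\times}$-valued $2$-cocycle $\lambda$, and identify $\mathrm{END}(E)$ with $A$ via the induced inner automorphisms. One small remark: the symmetry $g_{ij}=g_{ji}^{-1}$ can always be arranged without further refinement by choosing the lift on one ordering of each unordered pair and defining the other by inversion, so that step is less delicate than you suggest.
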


\begin{proof}
Let $V=\mathbb{C}^{n}.$ According to the Skolem-Noether Theorem, the
structural group of $A$ is $\mathrm{PGL}_{n}(\mathbb{C})=\mathrm{GL}%
_{n}(\mathbb{C})/\mathbb{C}^{\times},$ where $\mathrm{PGL}_{n}(\mathbb{C})$
acts on $M_{n}(\mathbb{C})$ by inner automorphisms. We may describe this
bundle $A $ by transition functions%
\[
\gamma_{ji}:U_{i}\cap U_{j}\rightarrow\mathrm{PGL}_{n}(\mathbb{C}),
\]
for a suitable covering $\mathcal{U}=(U_{i})$ of $X.$ Without loss of
generality, we may assume that $\gamma_{ii}=1$ and that $\gamma_{ji}%
=(\gamma_{ij})^{-1}.$ On the other hand, the principal fibration%
\[
\mathrm{GL}_{n}(\mathbb{C})\rightarrow\mathrm{PGL}_{n}(\mathbb{C})
\]
admits local continuous sections. Therefore, if we choose the covering
$\mathcal{U}$ $=(U_{i})$ fine enough, we can lift these $\gamma_{ji}$ to
continuous functions%
\[
g_{ji}:U_{i}\cap U_{j}\rightarrow\mathrm{GL}_{n}(\mathbb{C}).
\]
Moreover, we may choose the $g_{ji}$ such that $g_{ii}=1,g_{ij}=(g_{ji})^{-1}
$. Therefore, we have the identity $g_{kj}\cdot g_{ji}=g_{ki}\cdot
\lambda_{kji}$, where%
\[
\lambda_{kji}:U_{i}\cap U_{j}\cap U_{k}\rightarrow\mathbb{C}^{\times}%
\]
is de facto a completely normalized $2$-cocycle. If $E$ is the twisted vector
bundle associated to the $g^{\prime}s,$ we see that the algebra bundle
$\mathrm{END}(E)$ has transition functions which are%
\[
f\longmapsto g_{ji}\circ f\circ(g_{ji})^{-1},
\]
i.e. the inner automorphisms associated to the $g_{ji}.$
\end{proof}

\begin{remark}
\label{fine}We shall assume from now on that the coverings $\mathcal{U}$ we
are considering are \textquotedblleft good\textquotedblright. This means that
$\mathcal{U}$ has a finite number of elements and that all possible
intersections of elements of $\mathcal{U}$ are either empty or contractible.
This is always possible if $X$ is a compact manifold as shown for instance in
\cite{Bott-Tu} and \cite{Karoubi-Leruste}. In the previous theorem, we are
then able to replace the words \textquotedblleft suitably
fine\textquotedblright\ by \textquotedblleft good\textquotedblright\ since the
fibration%
\[
\mathrm{GL}_{n}(\mathbb{C})\rightarrow\mathrm{PGL}_{n}(\mathbb{C})
\]
has the homotopy lifting property. In this case, we also have
\[
H^{\ast}(X)\cong H^{\ast}(N(\mathcal{U})),K(X)\cong K(N(\mathcal{U})),
\]
etc., where $N(\mathcal{U})$ is the nerve of the covering $\mathcal{U}.$ Note
that its geometric realization has the homotopy type of $X.$
\end{remark}

\begin{remark}
For most spaces we are considering, good coverings are cofinal: any open
covering as a good refinement. This is the case for finite polyedra and, more
geometrically, for compact riemannian manifolds with open geodesic coverings
\cite{Bott-Tu}.
\end{remark}

The previous considerations also show that the cohomology class in%
\[
H^{2}(X;\mathbb{C}^{\times})\cong H^{3}(X;\mathbb{Z})
\]
associated to a twisted vector bundle is a torsion class (assuming that the
covering is good as in Remark \ref{fine}). To prove this, we consider the
commutative diagram%
\[%
\begin{array}
[c]{ccccccc}%
1\rightarrow & \mu_{n} & \rightarrow & \mathrm{U}(n) & \rightarrow &
\mathrm{PU}(n) & \rightarrow1\\
& \downarrow &  & \downarrow &  & \downarrow & \\
1\rightarrow & \mathbb{C}^{\times} & \rightarrow & \mathrm{GL}_{n}%
(\mathbb{C}) & \rightarrow & \mathrm{PGL}_{n}(\mathbb{C}) & \rightarrow1
\end{array}
\]
The non abelian cohomologies $H^{1}(X;PU(n))$ and $H^{1}(X;PGL_{n}(C))$ are
isomorphic and the coboundary map%
\[
H^{1}(X;\mathrm{PU}(n))\cong H^{1}(X;\mathrm{PGL}_{n}(\mathbb{C}))\rightarrow
H^{2}(X;\mathbb{C}^{\times})\cong H^{3}(X;\mathbb{Z})
\]
factors through $H^{2}(X;\mu_{n})$ (also see Appendix 8.1). Therefore the
cocycle $(\lambda_{kji})$ defines a torsion class in $H^{3}(X;\mathbb{Z}).$ It
is a theorem of Serre \cite{Grothendieck Bourbaki} that such an element comes
from an algebra bundle as we have described. Later on, we shall show how we
can recover the full cohomology group $H^{3}(X;\mathbb{Z})$ from algebra
bundles of infinite dimension, as it was observed by Rosenberg
\cite{Rosenberg}.

The following theorem is important for our dictionary relating twisted vector
bundles to modules over suitable algebra bundles.

\begin{theorem}
\label{twisted=A-modules}Let A be an algebra bundle which may be written as
$\mathrm{END}(E),$ where $E$ is a twisted vector bundle associated to a
covering $\mathcal{U}$, transition functions $g_{ji}$ and a completely
normalized $2$-cocycle $\lambda$ with values in $\mathbb{C}^{\times}.$ Let
$\mathcal{E}_{\lambda}(\mathcal{U})$ be the category of $\lambda$-twisted
vector bundles and $\mathcal{E}^{A}(\mathcal{U})$ be the category of finite
dimensional vector bundles trivialized by the covering $\mathcal{U}$, which
are right A-modules. Then the functor%
\[
\psi:\mathcal{E}_{\lambda}(\mathcal{U})\rightarrow\mathcal{E}^{A}(\mathcal{U})
\]
defined by%
\[
F\mapsto\mathrm{HOM}(E,F)
\]
is an equivalence of categories.
\end{theorem}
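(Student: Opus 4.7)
The strategy is to produce a quasi-inverse functor $\phi: \mathcal{E}^{A}(\mathcal{U}) \to \mathcal{E}_{\lambda}(\mathcal{U})$ via a fibrewise Morita construction. In each fibre, $A_{i}=\mathrm{End}(E_{i})$ is a matrix algebra, so the functor $N \mapsto N \otimes_{A_{i}} E_{i}$ from finite-dimensional right $A_{i}$-modules to vector spaces is a Morita equivalence, with quasi-inverse $V \mapsto \mathrm{Hom}(E_{i},V)$ (where the right $A_{i}$-action on $\mathrm{Hom}(E_{i},V)$ is $f \cdot a = f \circ a$). The task is to globalize this equivalence compatibly with the transition data of $E$ and $A$.

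Explicitly, given $M \in \mathcal{E}^{A}(\mathcal{U})$ with transition functions $\mu_{ji}: M_{i}\to M_{j}$, the compatibility of $\mu_{ji}$ with the $A$-action reads $\mu_{ji}(m \cdot a) = \mu_{ji}(m)\cdot(g_{ji}\circ a \circ g_{ij})$, since the transition functions of $A=\mathrm{END}(E)$ are inner conjugation by the $g_{ji}$. I would then set $\phi(M)_{i} := M_{i}\otimes_{A_{i}} E_{i}$ with transition maps
\[
\nu_{ji}(m\otimes v) := \mu_{ji}(m)\otimes g_{ji}(v).
\]
The $A$-equivariance of $\mu_{ji}$ is precisely what makes $\nu_{ji}$ descend to the balanced tensor product over $A_{i}$, and the computation
\[
\nu_{kj}\circ \nu_{ji}(m\otimes v) = \mu_{ki}(m)\otimes g_{kj}g_{ji}(v) = \lambda_{kji}\cdot\bigl(\mu_{ki}(m)\otimes g_{ki}(v)\bigr)
\]
shows that $\phi(M)$ satisfies the $\lambda$-twisted cocycle identity, so indeed $\phi(M) \in \mathcal{E}_{\lambda}(\mathcal{U})$.

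It then remains to exhibit natural isomorphisms $\phi\psi \cong \mathrm{Id}$ and $\psi\phi \cong \mathrm{Id}$. For $\phi\psi(F) \cong F$, the evaluation $f \otimes v \mapsto f(v)$ gives a fibrewise isomorphism $\mathrm{Hom}(E_{i},F_{i})\otimes_{A_{i}} E_{i} \to F_{i}$; for $\psi\phi(M) \cong M$, the coevaluation $m \mapsto (v \mapsto m\otimes v)$ gives a fibrewise isomorphism $M_{i}\to \mathrm{Hom}(E_{i}, M_{i}\otimes_{A_{i}} E_{i})$. Both maps are the standard unit and counit of Morita equivalence for the progenerator $E_{i}$. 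Checking that these fibrewise maps assemble into morphisms of (twisted) bundles reduces, after unwinding the transition rules, to the identity $g_{ji}\circ g_{ij} = 1$, which follows from the complete normalization $\lambda_{jij} = 1$. I expect the only real obstacle to be notational: tracking the left-versus-right $A$-actions under $\mathrm{HOM}$ and $\otimes$, and confirming that the twist on $E$ cancels correctly against the twist on $F$ whenever both bundles share $\lambda$. No ingredient beyond fibrewise Morita theory and the given cocycle data should be required.
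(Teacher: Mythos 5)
Your proposal is correct and follows essentially the same route as the paper: both construct the quasi-inverse $M \mapsto M\otimes_{A}E$ and invoke the unit/counit isomorphisms of the fibrewise Morita equivalence for the progenerator $E_{i}$ (the paper packages this slightly differently, first checking full faithfulness via $\mathrm{Hom}(N,P)\cong\mathrm{Hom}_{\Lambda}(\mathrm{Hom}(M,N),\mathrm{Hom}(M,P))$ and then essential surjectivity via $\mathrm{Hom}(M,N)\otimes_{A}M\cong N$, which amounts to the same thing). Your version is, if anything, more explicit about verifying the $\lambda$-twisted cocycle identity for the transition maps $\nu_{ji}$, a detail the paper leaves to the reader under the phrase ``translated into the framework of twisted vector bundles.''
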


\begin{proof}
We first notice that if $M,N$ and $P$ are finite dimensional vector spaces
with $M\neq0$ and if $\Lambda=End(M)$, the obvious map%
\[
Hom(N,P)\rightarrow Hom_{\Lambda}(Hom(M,N),Hom(M,P))
\]
is an isomorphism. Since $N$ is a direct summand of some $M^{r},$ it is enough
to check the statement for $N=M,$ in which case it is obvious. This functorial
isomorphism at the level of vector spaces may be translated into the framework
of twisted vector bundles by the isomorphism%
\[
Hom(F,G)\overset{\cong}{\longrightarrow}Hom_{A}(\mathrm{HOM}(E,F),\mathrm{HOM}%
(E,G)).
\]
This shows that the functor $\Psi$ is fully faithful.

On the other hand, we have a canonical isomorphism of vector spaces%
\[
Hom(M,N)\otimes_{A}M\rightarrow N,
\]
defined by $(f,x)\longmapsto f(x)$ which can also be translated in the
framework of twisted vector bundles. This shows that if we start with a bundle
$L$ which is a right $A$-module, where $A$ is some $\mathrm{END}(E),$ we can
associate to it a twisted vector bundle $F$ by the formula%
\[
F=L\otimes_{A}E=\Psi^{\prime}(L).
\]
Since\textrm{\ }$\mathrm{HOM}(E,F)\otimes_{A}E$ is canonically isomorphic to
$F,$ $\psi^{\prime}$ induces a functor going backwards%
\[
\psi^{\prime}:\mathcal{E}^{A}(\mathcal{U})\rightarrow\mathcal{E}_{\lambda
}(\mathcal{U}).
\]
Finally, there is an obvious isomorphism%
\[
L\rightarrow\mathrm{HOM}(E,L\otimes_{A}E)=\Psi(\Psi^{\prime}(L))
\]
This shows that the functor $\Psi$ is essentially surjective.
\end{proof}

This module interpretation enables us to prove the following Theorem.

\begin{theorem}
Let $\mathcal{U}$ $=(U_{i}),i\in I,$ be a good\ covering of $X$ as in Remark
\ref{fine} and let $\mathcal{V}$ $=(V_{s}),$ $s\in S,$ be a refinement of
$\mathcal{U}$ which is also good. Then for any $\tau:S\rightarrow I$ such that
$V_{s}\subset U_{\tau(s)},$ the associated restriction map%
\[
R_{\tau}:H_{\lambda}^{1}(\mathcal{U};GL_{n}(\mathbb{C}))\rightarrow
H_{\tau^{\ast}(\lambda)}^{1}(\mathcal{V};GL_{n}(\mathbb{C}))
\]
is a bijection.
\end{theorem}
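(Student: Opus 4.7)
The plan is to reinterpret both sides of $R_\tau$ as isomorphism classes of modules over a single globally defined algebra bundle on $X$ via Theorem \ref{twisted=A-modules}, and then exploit the fact that on a good covering the trivialization data is automatic.

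Choose any $\lambda$-twisted $1$-cocycle $(g_{ji})$ on $\mathcal{U}$ and let $E$ be the corresponding reference $\lambda$-twisted vector bundle; set $A := \mathrm{END}(E)$. Because $\lambda$ takes values in the centre $\mathbb{C}^{\times}$, the adjoint transition functions $f \longmapsto g_{ji}\cdot f\cdot g_{ij}$ of $A$ satisfy an \emph{honest} (untwisted) $1$-cocycle identity, so $A$ is a genuine algebra bundle on $X$ with fibre $\mathrm{End}(\mathbb{C}^n)$. The pullback cocycle $(g_{\tau(s)\tau(r)})$ defines a $\tau^{\ast}(\lambda)$-twisted bundle $\tau^{\ast}(E)$ on $\mathcal{V}$, and $\mathrm{END}(\tau^{\ast}(E))$ is the same algebra bundle $A$, only now presented via its trivialization on $\mathcal{V}$.

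Applying Theorem \ref{twisted=A-modules} on both coverings yields equivalences of categories
\[
\psi_{\mathcal{U}} : \mathcal{E}_{\lambda}(\mathcal{U}) \longrightarrow \mathcal{E}^{A}(\mathcal{U}), \qquad \psi_{\mathcal{V}} : \mathcal{E}_{\tau^{\ast}(\lambda)}(\mathcal{V}) \longrightarrow \mathcal{E}^{\tau^{\ast}(A)}(\mathcal{V}),
\]
sending $F\mapsto\mathrm{HOM}(E,F)$ and $G\mapsto\mathrm{HOM}(\tau^{\ast}(E),G)$ respectively. The manifest functorial identity $\mathrm{HOM}(\tau^{\ast}(E),\tau^{\ast}(F))=\tau^{\ast}(\mathrm{HOM}(E,F))$ shows that, through $\psi_{\mathcal{U}}$ and $\psi_{\mathcal{V}}$, the restriction map $R_{\tau}$ corresponds to the obvious restriction functor that sends an $A$-module bundle trivialized by $\mathcal{U}$ to the same $A$-module bundle viewed as trivialized by the finer cover $\mathcal{V}$.

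Now invoke the goodness of both coverings (Remark \ref{fine}): each $U_{i}$ and each $V_{s}$ is contractible, so every vector bundle over such an open set is trivial, and the condition ``trivialized by the cover'' is automatic for any finite-rank $A$-module bundle on $X$. Hence both $\mathcal{E}^{A}(\mathcal{U})$ and $\mathcal{E}^{\tau^{\ast}(A)}(\mathcal{V})$ are equivalent to the single category of right $A$-module bundles on $X$, and the restriction functor between them is an equivalence. Restricting to those isomorphism classes which correspond to rank-$n$ twisted cocycles (i.e.\ $A$-modules that are fibrewise free of rank one over $A$) produces the claimed bijection $R_{\tau}$. The main obstacle I anticipate is the careful verification that the equivalences $\psi_{\mathcal{U}}$ and $\psi_{\mathcal{V}}$ genuinely intertwine the upstairs cocycle restriction with the downstairs module-bundle restriction; once that compatibility is spelled out, the rest is a change-of-cover triviality.
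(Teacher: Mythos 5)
Your proof follows essentially the same route as the paper's: translate $\lambda$-twisted bundles on each cover into $A$-module bundles for a single algebra bundle $A=\mathrm{END}(E)$ via Theorem~\ref{twisted=A-modules}, and use the goodness of both covers to make the ``trivialized by the cover'' condition vacuous, so that $R_\tau$ becomes the identity on a common set of $A$-module isomorphism classes. The one small divergence is that the paper takes the reference bundle $E$ to be of an arbitrary rank $m$ rather than $n$, which avoids tacitly assuming $H^1_\lambda(\mathcal{U};GL_n(\mathbb{C}))$ is nonempty when choosing $(g_{ji})$; you may want to either note the empty case separately or drop the rank restriction on $E$.
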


\begin{proof}
Since $\mathcal{U}$ is good, for any completely normalized cocycle $\lambda,$
one can find a twisted vector bundle $E$ of rank $m$ on $\mathcal{U}$ and
$\mathcal{V}$, such that $A=\mathrm{END}(E)$ is a bundle of algebras
associated to $\lambda.$ According to the previous equivalence of categories,
the sets $H_{\lambda}^{1}(\mathcal{U};GL_{n}(\mathbb{C})$) and $H_{\tau^{\ast
}(\lambda)}^{1}(\mathcal{V};GL_{n}(\mathbb{C}))$ are in bijective
correspondence with the set of $A$-modules which are locally of type
$Hom(\mathbb{C}^{m},\mathbb{C}^{n}).$ With this identification, the
restriction map $R$ is just an automorphism of this set.
\end{proof}

\begin{remark}
We may prove the homotopy invariance of the category of twisted vector bundles
thanks to this dictionary (at least if $X$ is compact): a twisted vector
bundle may be interpreted as a bundle of $A$-modules, or as a finitely
generated projective module over the Banach algebra $\Lambda=\Gamma(X,A)$ of
continuous sections of $A.$ It is easy to show that modules over
$\Lambda\left[  0,1\right]  $ can be extended from $\Lambda$ (see e.g.
\cite{Karoubi livre}).
\end{remark}

\bigskip

\section{Twisted $K$-theory}

\bigskip

Let $\mathcal{U}$ be a good covering (Remark \ref{fine}) of a space $X$ and
let $\lambda_{kji}$ be a completely normalized $2$-cocycle with values in
$\mathbb{C}^{\times}.$ We consider the category of twisted vector bundles
associated to $\mathcal{U}$ and to the cocycle $\lambda$. This is clearly an
additive category which is moreover pseudo-abelian (every projection operator
has a kernel). We denote by $K_{\lambda}(\mathcal{U})$ its Grothendieck group,
which is also the $K$-group of the category of $A$-modules over $X,$ where
$A\mathcal{=}$ $\mathrm{END}(E)$, as explained at the end of the previous
section. Since this definition is independent from $\mathcal{U}$ up to a non
canonical isomorphism (see Appendix 8.3), we shall also call it $K_{\lambda
}(X)$: this is the classical definition of (ungraded) twisted $K$-theory as
detailed in many references, e.g. \cite{DK}, \cite{Atiyah and Segal},
\cite{Karoubi}.

In this situation, the cocycle $\lambda$ has a cohomology class $\left[
\lambda\right]  $ in the torsion subgroup of
\[
H^{2}(X;\mathbb{C}^{\times})\cong H^{3}(X;\mathbb{Z}).
\]
as we saw in Section 2. When $\left[  \lambda\right]  $ is not necessarily a
torsion class, we should consider \textquotedblleft twisted Hilbert
bundles\textquotedblright\ which are defined in the same way as twisted vector
bundles but with a fibre which is an infinite dimensional Hilbert
space\footnote{For simplicity's sake, we assume $H$ to be separable, i.e.
isomorphic to the classical $l^{2}$ space.} $H.$ It is also more convenient to
use the unitary group $U(H)$ instead of the general linear group as our basic
structural group. In other words, the $(g_{ji})$ in Sections $1$ and $2$ are
now elements of $U(H).$ The 2-cocycle $(\lambda_{kji})$ takes its values in
the topological group $S^{1}.$

From the fibration%
\[
S^{1}\rightarrow U(H)\rightarrow PU(H)
\]
and the contractibility of $U(H)$ (Kuiper's theorem), we see that $PU(H)$ is a
model of the Eilenberg-Mac Lane space $K(\mathbb{Z},3).$ On the other hand,
since $PU(H)$ acts on $\mathcal{L}(H)=\mathrm{End}(H)$ by inner automorphisms,
we deduce that any $2$-cocycle $\lambda=(\lambda_{kji})$ defines an algebra
bundle \underline{$\mathcal{L}$}$_{\lambda}$ with fibre $\mathcal{L}(H)$ which
is well defined up to isomorphism. Therefore, as in the finite dimensional
case, we have the following theorem.

\begin{theorem}
Let \underline{$\mathcal{L}$}$_{\lambda}$ be the bundle of algebras with fibre
$\mathcal{L}(H)$ associated to the cocycle $\lambda.$ Then, if the covering
$\mathcal{U}$ is good as in Remark \ref{fine}, \underline{$\mathcal{L}$%
}$_{\lambda}$ may be written as $\mathrm{END}(E),$ where $E$ is a $\lambda
$-twisted Hilbert bundle$.$
\end{theorem}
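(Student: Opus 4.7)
The plan is to follow the proof of Theorem \ref{A=End(E)} line by line, substituting $U(H)$ for $\mathrm{GL}_n(\mathbb{C})$ and $PU(H)$ for $\mathrm{PGL}_n(\mathbb{C})$. The algebra bundle $\underline{\mathcal{L}}_{\lambda}$ has structural group $PU(H)$ acting on $\mathcal{L}(H)$ by inner automorphisms, and its Dixmier--Douady class in $H^3(X;\mathbb{Z})$ agrees with the class of $\lambda$. First I would choose completely normalized transition functions $\gamma_{ji}\colon U_i\cap U_j\to PU(H)$ for the underlying principal $PU(H)$-bundle, with $\gamma_{ii}=1$ and $\gamma_{ij}=\gamma_{ji}^{-1}$.

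The central step is to lift each $\gamma_{ji}$ to a continuous map $g_{ji}\colon U_i\cap U_j\to U(H)$. Here I would use that $\mathcal{U}$ is good, so every $U_i\cap U_j$ is contractible, together with the fact that $U(H)\to PU(H)$ is a locally trivial principal $S^1$-bundle; such lifts therefore exist, and one can arrange $g_{ii}=1$ and $g_{ij}=(g_{ji})^{-1}$. The element $\mu_{kji}=g_{kj}\cdot g_{ji}\cdot g_{ik}$ then automatically lies in $\ker(U(H)\to PU(H))=S^1$, and the same formal argument as in Section~1 shows that $\mu$ is a completely normalized $2$-cocycle. By the standard obstruction-theoretic interpretation, $[\mu]$ equals the Dixmier--Douady class of $\underline{\mathcal{L}}_{\lambda}$, hence $[\mu]=[\lambda]$ in $H^2(X;S^1)\cong H^3(X;\mathbb{Z})$.

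The last step is to replace $\mu$ by $\lambda$ on the nose, using the first Proposition of Section~1: writing $\lambda_{kji}=\mu_{kji}\cdot\eta_{ji}\cdot\eta_{ki}^{-1}\cdot\eta_{kj}$ for some $S^1$-valued $1$-cochain $\eta$, I would set $g'_{ji}=g_{ji}\cdot\eta_{ji}$. This modification does not affect the image in $PU(H)$ and converts the cocycle relation into the $\lambda$-twisted form $g'_{kj}\cdot g'_{ji}=g'_{ki}\cdot\lambda_{kji}$. The resulting $\lambda$-twisted Hilbert bundle $E$ then satisfies $\mathrm{END}(E)\cong\underline{\mathcal{L}}_{\lambda}$ by the very form of its transition maps $a\mapsto g'_{ji}\,a\,(g'_{ji})^{-1}$. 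The main obstacle is the lifting step: one must know that $U(H)\to PU(H)$ admits continuous local sections in the chosen operator topology on $U(H)$, so that it is a genuinely locally trivial principal $S^1$-bundle and not merely a Serre fibration. This is the only real infinite-dimensional input, and it is standard for the Banach--Lie group structure on $U(H)$.
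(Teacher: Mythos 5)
Your proof follows the paper's argument exactly: over a good covering, lift the $PU(H)$-valued transition functions of $\underline{\mathcal{L}}_{\lambda}$ to $U(H)$ using contractibility of the intersections, observe that the resulting coboundary is an $S^{1}$-valued $2$-cocycle, and conclude $\underline{\mathcal{L}}_{\lambda}\cong\mathrm{END}(E)$. Your last step — adjusting the lift by an $S^{1}$-cochain $\eta$ via Proposition 1.2 so that the twist is $\lambda$ on the nose rather than merely cohomologous to it — is a genuine refinement that the paper's proof leaves implicit in the phrase ``the proof now ends as the proof of Theorem 3.4,'' and it is worth making explicit since the statement prescribes $\lambda$ in advance.
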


\begin{proof}
We just copy the proof of Theorem \ref{A=End(E)} in the infinite dimensional
case. In a more precise way, the structural group of \underline{$\mathcal{L}$%
}$_{\lambda}$ is $PU(H)=U(H)/S^{1}$ acting on $\mathcal{L}(H)$ by inner
automorphisms. Therefore, we may describe the principal bundle by transition
functions%
\[
\gamma_{ji}:U_{i}\cap U_{j}\rightarrow PU(H)
\]
for a good covering $\mathcal{U}=(U_{i})$ of $X$ and we may assume that
$\gamma_{ii}=1,\gamma_{ji}=(\gamma_{ij})^{-1}.$ Since the principal fibration%
\[
\pi:U(H)\rightarrow PU(H)
\]
is locally trivial and the $U_{i}\cap U_{j}$ are contractible (if non empty),
there are continuous maps%
\[
g_{ji}:U_{i}\cap U_{j}\rightarrow U(H),
\]
such that $\pi\circ g_{ji}=\gamma_{ji}.$ The proof now ends as the proof of
Theorem \ref{A=End(E)}.
\end{proof}

\begin{theorem}
Let \underline{$\mathcal{L}$}$_{\lambda}$ be the algebra bundle $\mathrm{END}%
(E),$ where $E$ is a $\lambda$-twisted Hilbert bundle on a covering
$\mathcal{U}.$ Let $\mathcal{E}_{\lambda}(\mathcal{U})$ be the category of
$\lambda$-twisted Hilbert bundles with fibre $H$ and, finally, let
$\mathcal{E}^{\underline{\mathcal{L}}_{\lambda}}(\mathcal{U})$ be the category
of bundles which are right \underline{$\mathcal{L}$}$_{\lambda}$%
-modules\footnote{More precisely, we assume that locally the module is
isomorphic to $\mathcal{L}(H),$ with its standard $\mathcal{L}(H)$-module
structure.}, trivialized over the elements of $\mathcal{U}$. Then, the functor%
\[
\Psi:\mathcal{E}_{\lambda}(\mathcal{U})\rightarrow\mathcal{E}^{\underline
{\mathcal{L}}_{\lambda}}(\mathcal{U}),
\]
defined by the formula%
\[
F\longmapsto\mathrm{HOM}(E,F).
\]
is an equivalence of categories.
\end{theorem}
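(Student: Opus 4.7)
My plan is to mimic the proof of Theorem \ref{twisted=A-modules} verbatim, the novelty being only that all linear-algebra identities must now be verified for bounded operators on a separable Hilbert space. The footnote restricting attention to bundles of right $\underline{\mathcal{L}}_\lambda$-modules that are locally isomorphic to the standard bimodule $\mathcal{L}(H)$ has the effect of reducing each pointwise check to the Morita equivalence of $\mathcal{L}(H)$ with $\mathbb{C}$ at its simplest ``rank one'' level, so no general structure theory for $\mathcal{L}(H)$-modules is required.

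First I would establish the two pointwise Morita identities. For Hilbert spaces $N$ and $P$ isomorphic to $H$ and $\Lambda = \mathcal{L}(H)$, I need the canonical maps
$$
\mathrm{Hom}(N,P) \longrightarrow \mathrm{Hom}_\Lambda\bigl(\mathrm{Hom}(H,N),\mathrm{Hom}(H,P)\bigr), \qquad f \longmapsto (\varphi \mapsto f\circ\varphi),
$$
and
$$
\mathrm{Hom}(H,N)\otimes_\Lambda H \longrightarrow N, \qquad \varphi\otimes x \longmapsto \varphi(x),
$$
to be bijective. When $N\cong P\cong H$, $\mathrm{Hom}(H,N)$ is isomorphic to $\Lambda$ as a right $\Lambda$-module, the first map reduces to $\mathrm{End}_\Lambda(\Lambda)\cong\Lambda$ (the commutant of the right regular representation is the left regular representation), and the second becomes $\Lambda\otimes_\Lambda H \cong H$. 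Both are immediate.

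Next I would globalize these over the covering. For full faithfulness, I note that the space of morphisms $F\to G$ of $\lambda$-twisted Hilbert bundles is the space of sections of a natural bundle $\mathrm{HOM}(F,G)$ (the infinite-dimensional analog of the earlier proposition), while the space of right $\underline{\mathcal{L}}_\lambda$-linear maps $\mathrm{HOM}(E,F)\to\mathrm{HOM}(E,G)$ is the space of sections of the bundle whose fibre over $x$ is $\mathrm{Hom}_\Lambda(\mathrm{Hom}(E_x,F_x),\mathrm{Hom}(E_x,G_x))$; the first pointwise identity identifies these two bundles, and the twist on both sides cancels in the same way as in the finite-dimensional proof (via the transition functions $g_{ji}$ for $E$ appearing symmetrically). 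For essential surjectivity, I would introduce the inverse functor
$$
\Psi'(L) = L\otimes_{\underline{\mathcal{L}}_\lambda} E,
$$
and use the second pointwise identity, together with the local trivializations, to produce from each $\underline{\mathcal{L}}_\lambda$-module bundle $L$ a $\lambda$-twisted Hilbert bundle $\Psi'(L)$ whose transition functions are precisely the $g_{ji}$; the unit and counit of the adjunction $\Psi\dashv\Psi'$ are then isomorphisms fibrewise.

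The main obstacle is not the categorical manipulation but the topological well-definedness of $L\otimes_{\underline{\mathcal{L}}_\lambda} E$ in the Hilbert setting: one must ensure that the algebraic tensor product, carried out fibre by fibre, assembles into an honest Hilbert bundle, and that the local identifications $\mathcal{L}(H)\otimes_{\mathcal{L}(H)} H = H$ glue coherently through the $\mathrm{PU}(H)$-valued transition cocycle. This is where the assumption that $L$ is locally isomorphic to the standard module $\mathcal{L}(H)$ is essential: it lets us bypass any general discussion of topological tensor products and simply transport the standard fibre $H$ across the cocycle, producing a genuine $\lambda$-twisted Hilbert bundle. Once this verification is done, the rest of the argument is a purely formal rewriting of the finite-dimensional proof.
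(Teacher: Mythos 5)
Your approach matches the paper's, which likewise transposes the finite-dimensional proof of Theorem~\ref{twisted=A-modules} by observing that in the Hilbert setting all the relevant fibres are isomorphic to $H$, so the key isomorphism $\mathrm{Hom}(F,G)\to\mathrm{Hom}_A(\mathrm{HOM}(E,F),\mathrm{HOM}(E,G))$ reduces to the commutant identity $\mathrm{End}_{\mathcal{L}(H)}(\mathcal{L}(H))\cong\mathcal{L}(H)$, exactly as you say, and the inverse functor is again $L\mapsto L\otimes_{\underline{\mathcal{L}}_\lambda}E$.

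One slip to correct in your description of $\Psi'$: the transition functions of $\Psi'(L)=L\otimes_{\underline{\mathcal{L}}_\lambda}E$ cannot be ``precisely the $g_{ji}$'', for then $\Psi'$ would send every $L$ to a bundle isomorphic to $E$ and could not be an inverse to $\Psi$. Compatibility of the local trivializations of $L$ with the right $\underline{\mathcal{L}}_\lambda$-module structure (whose transition functions are $a\mapsto g_{ji}\,a\,g_{ij}$) forces the transition functions of $L$ to have the form $\ell\mapsto h_{ji}\cdot\ell\cdot g_{ij}$ for some $h_{ji}$; computing the induced transformation of $\ell\otimes x\mapsto\ell(x)$ shows that $\Psi'(L)$ has transition functions $h_{ji}$, which depend on $L$ and satisfy the $\lambda$-twisted cocycle condition. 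This is a local-coordinate misstatement rather than a flaw in your strategy; the rest of your outline, including the remark on why the pointwise Morita reduction bypasses any delicate topological tensor product discussion, is sound and mirrors the paper's (very terse) proof.
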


\begin{proof}
It is also completely analogous to the proof of Theorem
\ref{twisted=A-modules}. In a more precise way, instead of considering all
finite dimensional vector spaces, we take Hilbert spaces $M,N,P,etc.$ of the
same cardinality, i.e. isomorphic to the classical $l^{2}$-space. For
instance, the isomorphism used in the proof of Theorem \ref{twisted=A-modules}%
\[
Hom(F,G)\overset{\cong}{\longrightarrow}Hom_{A}(\mathrm{HOM}(E,F),\mathrm{HOM}%
(E,G))
\]
is a consequence of the fact that it is true at the level of Hilbert spaces
since $Hom(M,N)$ is isomorphic to $End(M)=\mathcal{L}(H).$ The proof of the
theorem again ends as in the case of finite dimensional vector spaces.
\end{proof}

For $\left[  \lambda\right]  \in H^{3}(X;\mathbb{Z}\mathrm{)}=H^{1}(X;PU(H))$
which is not necessarily a torsion class, we may define the associated twisted
$K$-theory in many ways. The first definition is due to Rosenberg
\cite{Rosenberg}: the class $\left[  \lambda\right]  $ is represented up to
isomorphism by a principal bundle $P$ with structural group $PU(H).$ Since
$PU(H)$ is acting on the ideal of compact operators $\mathcal{K}$ in
$\mathcal{L=L}(H)$ by inner automorphisms, we get an associated bundle
\underline{$\mathcal{K}$}$_{\lambda}$ of C*-algebras. The twisted $K$-theory
is then the usual $K$-theory of the algebra of sections of \underline
{$\mathcal{K}$}$_{\lambda}.$An equivalent way to define \underline
{$\mathcal{K}$}$_{\lambda}$ is to consider a twisted Hilbert bundle $E$
associated to the cocycle $\lambda$ (it is unique up to isomorphism). Then,
\underline{$\mathcal{K}$}$_{\lambda}$ is the subalgebra of the sections of the
bundle \underline{$\mathcal{L}$}$_{\lambda}=END(E)$ which belong to
$\mathcal{K}(H)$ over each open set of $\mathcal{U}.$

One unpleasant aspect of this definition is the non existence of a unit
element in \underline{$\mathcal{K}$}$_{\lambda},$ which makes its $K$-theory
slightly complicated to handle. However, we may replace $\mathcal{K}$ by the
subalgebra $\mathcal{A}$ of $\mathcal{L\times L}$ consisting of couples of
operators $(f,g)$ such that $f-g$ $\in\mathcal{K}.$ The group $PU(H)$ is
acting on $\mathcal{A}$, so that we may also twist the algebra $\mathcal{A}$
by $\lambda$ in order to get an algebra bundle \underline{$\mathcal{A}$%
}$_{\lambda}.$ The obvious exact sequence of C*-algebras%
\[
0\rightarrow\mathcal{K}\rightarrow\mathcal{A}\rightarrow\mathcal{L}%
\rightarrow0
\]
induces an exact sequence of algebra bundles%
\[
0\rightarrow\underline{\mathcal{K}}_{\lambda}\rightarrow\underline
{\mathcal{A}}_{\lambda}\rightarrow\underline{\mathcal{B}}_{\lambda}%
\rightarrow0.
\]
Here and elsewhere, using a variation of the Serre-Swan theorem, we shall
often use the same terminology for an algebra bundle and its associated
algebra of continuous sections. In particular the $K$-theory of \underline
{$\mathcal{A}$}$_{\lambda}$ is canonically isomorphic to the $K$-theory of
$\underline{\mathcal{K}}_{\lambda}$ since $\underline{\mathcal{B}}_{\lambda}$
is a flabby algebra\footnote{A Banach algebra $A$ is called flabby if there is
a topological $A$-bimodule $M$ which is projective of finite type as a right
module, such that $M\oplus A$ is isomorphic to $M.$ This is equivalent to
saying that the Banach category $\mathcal{C=P}(A)$ is flabby: there is a
linear continuous functor $\tau$ from $\mathcal{C}$ to itself such that
$\tau\oplus Id_{\mathcal{C}}$ is isomorphic to $\tau.$} (in particular its
$K$-groups are trivial).

A comment is in order to make our previous definition more functorial: the
$\lambda$-twisted $K$-theory is defined precisely as the $K$-theory of bundles
with fibres $\mathcal{A}$-modules which are finitely generated and projective
but twisted by the cocycle $\lambda.$ How this depends only on the cohomology
class $\left[  \lambda\right]  $ is discussed in Appendix 8.3. Our Section $3$
on twisted vector bundles may now be rewritten by replacing the field of
complex numbers $\mathbb{C}$ by the C*-algebra $\mathcal{A}$ and the finite
dimensional bundles by \textquotedblleft$\mathcal{A}$%
-bundles\textquotedblright\ as above. Theorem $3.5$ adapted to this situation
shows that the category of $\lambda$-twisted $\mathcal{A}$-bundles is
equivalent to the category of \underline{$\mathcal{A}$}$_{\lambda}$-modules if
the covering $\mathcal{U}$ of $X$ is good. This shows in particular that the
theory of twisted $\mathcal{A}$-bundles is homotopically invariant (at least
if $X$ is compact).

However, one has to point out a main difference between $\mathbb{C}$-modules
and $\mathcal{A}$-modules: a priori, the fibres of $\mathcal{A}$-bundles are
not necessarily free\footnote{However, we shall show in Section 7 that the
fibres are free modules if the restriction of the cohomology class of
$\lambda$ to every connected component of $X$ is of infinite order.}. However,
since $K(\mathcal{A})$ is canonically isomorphic to $\mathbb{Z}$, each
$\mathcal{A}$-bundle $E$ induces a locally constant function (called the
\textquotedblleft rank\textquotedblright)%
\[
Rk:X\rightarrow\mathbb{Z}\mathrm{,}%
\]
obtained by applying the $K$-functor to each fibre. This correspondence
defines a group map%
\[
Ch_{(0)}:K(\underline{\mathcal{A}}_{\lambda})\rightarrow H^{0}(X;\mathbb{Z}).
\]
In Section 7 we shall see how to define \textquotedblleft higher Chern
characters\textquotedblright\ $Ch_{(m)},$ starting from this elementary step.

In the spirit of Section 1, we may also consider twisted principal
$G$-bundles, where $G$ is the group of invertible elements in the algebra
$\mathcal{A}$. We note that the elements of $G$ are couples of invertible
operators $(g,h)$ in a Hilbert space such that $g-h$ is compact. We get
elements in the centre by considering $g=h\in\mathbb{C}^{\times}.$ More
accurately, one should replace $\mathcal{A}$ by the sub-algebra $\mathrm{End}%
(P),$ where $P$ is a finitely generated projective $\mathcal{A}$-module which
is the fibre of the bundles we are considering (assuming the base is
connected; otherwise the fibre $P$ may vary). Then $G$ is not exactly
$\mathcal{A}^{\times}$ but the subgroup $\mathrm{Aut}(P)$ of $\mathcal{A}%
^{\ast}.$ This point of view will be exploited in Section 7 for the definition
of the Chern character, whose target is twisted cohomology.

Finally, there is a third definition of twisted $K$-theory in terms of
Fredholm operators, following the ideas in \cite{Atiyah Book}, \cite{Janich}
and \cite{DK}. We consider the set of homotopy classes of triples%
\[
(E_{0},E_{1},D),
\]
where $E_{0}$ and $E_{1}$ are $\lambda$-twisted Hilbert bundles on a good
covering $\mathcal{U}$ and $D$ is a family of Fredholm operators\footnote{We
note that $\mathrm{HOM}(E,F)$ is an ordinary bundle with fibre $Hom(H,H)=$
$\mathcal{L}(H).$ The space of Fredholm operators "from $E$ to $F"$ is the
subspace of the sections of $\mathrm{HOM}(E,F),$ which are Fredholm over each
point of $X.$} from $E_{0}$ to $E_{1}.$ With the operation induced by the
direct sum of triples, we get a group denoted by $K_{\lambda}(\mathcal{U}).$
We note that $K_{\lambda}(\mathcal{U})$ is a module over $K(\mathcal{U}). $
Here $K(\mathcal{U})$ is a short notation for the usual $K$-theory of the
nerve of $\mathcal{U}.$ If $\mathcal{U}$ is good as in Remark \ref{fine}, it
is isomorphic to the classical topological $K$-group $K(X).$

In order to prove that this last definition is consistent with the previous
ones, we consider the Banach category of $\lambda$-twisted Hilbert bundles. It
is equivalent to the category of bundles of \underline{$\mathcal{L}$%
}$_{\lambda}$-modules, where \underline{$\mathcal{L}$}$_{\lambda}$ is the
algebra bundle above with fibre $\mathcal{L}(H)$ twisted by $\lambda$. Let
\underline{$\mathcal{L}$}$_{\lambda}/\underline{\mathcal{K}}_{\lambda}$ be the
quotient bundle with fibre the Calkin algebra $\mathcal{L}(H)/\mathcal{K}(H).$

\begin{lemma}
Let $\overline{D}$ be the class of $D$ as a morphism between the associated
\underline{$\mathcal{L}$}$_{\lambda}/\underline{\mathcal{K}}_{\lambda}%
$-modules. Then two triples $(E_{0},E_{1},D)$ and $(E_{0}^{\prime}%
,E_{1}^{\prime},D^{\prime})$ are homotopic if and only if the associated
triples $(E_{0},E_{1},\overline{D})$ and $(E_{0}^{\prime},E_{1}^{\prime
},\overline{D}^{\prime})$ are homotopic.
\end{lemma}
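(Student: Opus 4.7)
The ``only if'' direction is immediate: if $(\mathcal{E}_{0},\mathcal{E}_{1},\mathcal{D})$ is a triple over $X\times[0,1]$ realizing the homotopy between $(E_{0},E_{1},D)$ and $(E_{0}^{\prime},E_{1}^{\prime},D^{\prime})$, then applying the quotient functor $\underline{\mathcal{L}}_{\lambda}\to\underline{\mathcal{L}}_{\lambda}/\underline{\mathcal{K}}_{\lambda}$ fibrewise produces a homotopy of the Calkin triples. So the real content is the converse: given a homotopy of the triples $(E_{0},E_{1},\overline{D})$ and $(E_{0}^{\prime},E_{1}^{\prime},\overline{D}^{\prime})$, lift it to a homotopy of Fredholm triples.

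My plan is as follows. Start with a homotopy consisting of twisted Hilbert bundles $\mathcal{E}_{0},\mathcal{E}_{1}$ on $X\times[0,1]$ (with prescribed restrictions at $t=0,1$) and an isomorphism $\overline{\mathcal{D}}$ of the associated $(\underline{\mathcal{L}}_{\lambda}/\underline{\mathcal{K}}_{\lambda})$-modules. First I would lift $\overline{\mathcal{D}}$ to a section $\widetilde{\mathcal{D}}$ of $\mathrm{HOM}(\mathcal{E}_{0},\mathcal{E}_{1})$. Such a lift exists because the surjection $\underline{\mathcal{L}}_{\lambda}\to\underline{\mathcal{L}}_{\lambda}/\underline{\mathcal{K}}_{\lambda}$ admits continuous local sections (over contractible pieces of a good covering one can trivialize) and these are glued by a partition of unity, using that the kernel bundle $\underline{\mathcal{K}}_{\lambda}$ has linear fibres. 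Second, by Atkinson's theorem applied fibrewise, $\widetilde{\mathcal{D}}(x,t)$ is automatically Fredholm for every $(x,t)$, since its Calkin image $\overline{\mathcal{D}}(x,t)$ is invertible.

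The third and main step is to correct $\widetilde{\mathcal{D}}$ so that it restricts to $D$ and $D^{\prime}$ at the endpoints. The difference $\widetilde{\mathcal{D}}|_{t=0}-D$ (and analogously at $t=1$) is a continuous section of the compact-operator sub-bundle $\underline{\mathcal{K}}_{\lambda}$ over $X$, because both sections have the same image $\overline{D}$ in the Calkin quotient. Extend these two sections to a single continuous section $K$ of $\underline{\mathcal{K}}_{\lambda}$ over $X\times[0,1]$ taking the prescribed values at $t=0,1$; this is a Tietze-type extension for sections of a Banach-space bundle over the closed subset $X\times\{0,1\}$ of the normal space $X\times[0,1]$, which is standard for a good covering. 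Setting $\mathcal{D}:=\widetilde{\mathcal{D}}-K$ gives a section which is still Fredholm (since compact perturbations of Fredholm operators are Fredholm) and whose restrictions at $t=0,1$ are $D$ and $D^{\prime}$, as required.

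The step I expect to be the main obstacle is the compatibility between the lifting and the prescribed endpoint data, i.e. the extension of a compact-valued section from $X\times\{0,1\}$ to $X\times[0,1]$ through the bundle $\underline{\mathcal{K}}_{\lambda}$. Once one grants the Tietze-type extension theorem for continuous sections of Banach bundles over a normal space (which is routine given our goodness assumption on $\mathcal{U}$), the rest is just Atkinson's theorem plus the elementary fact that $\mathrm{Fred}$ is stable under compact perturbation.
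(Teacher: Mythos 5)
Your proof is correct and follows essentially the same route as the paper. The paper packages your three steps---lifting the Calkin data (via a continuous section of the Banach quotient, i.e.\ Bartle--Graves, which you reproduce by gluing local lifts with a partition of unity), invoking Atkinson's theorem for automatic Fredholmness, and correcting the endpoints through a contractible fibre---into the single statement that $\mathcal{F}(E_{0},E_{1})\rightarrow\mathrm{Iso}(\overline{E}_{0},\overline{E}_{1})$ is a trivial fibration with contractible fibre (the Banach space of sections of $\underline{\mathcal{K}}_{\lambda}$), from which it says ``the proposition follows immediately''; your Tietze-type endpoint correction simply makes explicit what that last phrase compresses.
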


\begin{proof}
In general, let us denote also by $\overline{M}$ the class of $M$ as an
\underline{$\mathcal{L}$}$_{\lambda}/\underline{\mathcal{K}}_{\lambda}%
$-module. We have a continuous map%
\[
\mathcal{F}(E_{0},E_{1})\rightarrow\mathrm{Iso}(\overline{E}_{0},\overline
{E}_{1}),
\]
where the notation $\mathcal{F}$ stands for continuous families of Fredholm
maps. According to a classical theorem on Banach spaces, this map admits a
continuous section. Therefore, we get a trivial fibration with contractible
fibre which is the Banach space of sections of the bundle $\underline
{\mathcal{K}}_{\lambda}$. The proposition follows immediately.
\end{proof}

The philosophy of the lemma is that our third definition of twisted $K$-theory
is equivalent to the Grothendieck group of the Banach functor%
\[
\varphi:\mathcal{P}^{\prime}\text{(}\underline{\mathcal{L}}_{\lambda
}\mathcal{)\rightarrow P}^{\prime}\text{(}\underline{\mathcal{L}}_{\lambda
}\text{/}\underline{\mathcal{K}}_{\lambda}\mathcal{)},
\]
as defined in \cite[Section II]{Karoubi livre}. Here the category
$\mathcal{P}^{\prime}$(\underline{$\mathcal{L}$}$_{\lambda}\mathcal{)}$ (resp.
$\mathcal{P}^{\prime}$(\underline{$\mathcal{L}$}$_{\lambda}$/\underline
{$\mathcal{K}$}$_{\lambda}\mathcal{))}$ is equivalent to the category of free
modules over \underline{$\mathcal{L}$}$_{\lambda}$ (resp. \underline
{$\mathcal{L}$}$_{\lambda}/\underline{\mathcal{K}}_{\lambda}\mathcal{)}.$
Since $K_{0}(\mathcal{P}^{\prime}(\underline{\mathcal{L}}_{\lambda}%
\mathcal{)})=0,$ this Grothendieck group is canonically isomorphic to
$K_{0}(\underline{\mathcal{K}}_{\lambda}\mathcal{)}$ which is precisely our
first definition since, as already mentioned, \underline{$\mathcal{K}$%
}$_{\lambda}$ is the algebra bundle with fibre $\mathcal{K}(H)$ associated to
the cocycle $\lambda.$

\begin{remark}
Instead of the Grothendieck group of the functor $\varphi,$ we could as well
consider the group $K_{1}(\underline{\mathcal{L}}_{\lambda}$/$\underline
{\mathcal{K}}_{\lambda})$ which is isomorphic to $K(\varphi),$ since
$\underline{\mathcal{L}}_{\lambda}$ is a flabby ring. We shall use this
equivalent description of twisted $K$-theory in Appendix 8.2.
\end{remark}

\begin{Remarks}
If $\lambda$ is of finite order, the Fredholm definition of twisted $K$-theory
is detailed in \cite[pg. 18]{DK}. If $\lambda=1,$ we recover the theorem of
Atiyah and Janich \cite{Atiyah Book}, \cite{Janich}, in a slightly weaker form.
\end{Remarks}

\medskip

As it is shown in \cite{DK} and \cite{Karoubi}, there is a $\mathbb{Z}%
$/2-graded version of twisted $K$-theory. This version is needed for the Thom
isomorphism in the general case of an arbitrary real vector bundle $V$ (which
is not necessarily oriented). It is also needed for the Poincar\'{e} pairing
applied to arbitrary manifolds. We shall concentrate on the case of non
torsion classes $\left[  \lambda\right]  $ in the third cohomology group of
$X$. The case when $\left[  \lambda\right]  $ is a torsion class in
$H^{3}(X;\mathbb{Z})$ has been extensively studied in \cite{DK}.

The essential idea is to replace the previous structural group $U(H)$ by the
group $\Gamma(H)$ of matrices in $U(H\oplus H)$ of type%
\[
\left(
\begin{array}
[c]{cc}%
g_{1} & 0\\
0 & g_{2}%
\end{array}
\right)
\]
or%
\[
\left(
\begin{array}
[c]{cc}%
0 & h_{1}\\
h_{2} & 0
\end{array}
\right)  .
\]
The point here is that $\Gamma(H)$ acts by inner automorphisms on
$\mathcal{L}(H\oplus H)$ with a degree shift which is either $0$ or $1,$ the
first copy of $H$ being of degree 0 and the second one of degree $1.$ As in
the previous Section, we may give a $\mathbb{Z}/2$-graded module
interpretation of twisted Hilbert bundles modelled on $\Gamma(H)$. If $E$ is
such a graded twisted Hilbert bundle, $A$ $=$ $\mathrm{END}(E)$ is a bundle of
graded algebras with fibre $\mathcal{L}(H\oplus H).$ Conversely, for any
bundle of graded algebras $A$ with fibre $\mathcal{L}(H\oplus H),$ there is a
twisted Hilbert bundle $E$ with structural group $\Gamma(H)$ such that $A$ is
isomorphic to $\mathrm{END}(E).$ According to \cite{DK}, \cite{Karoubi} and
our previous computations, these graded algebras are classified by the
following cohomology group%
\[
H^{1}(X;\mathbb{Z}/2)\times H^{3}(X;\mathbb{Z}),
\]
with a twisted addition rule, as explained in \cite[p. 10]{DK}. The first
invariant in $H^{1}(X;\mathbb{Z}/2)$ is induced by the map
\[
\Gamma(H)\rightarrow\mathbb{Z}/2,
\]
which describes the type of matrices in $\Gamma(H)$ (diagonal or
antidiagonal). The second invariant is defined as before for the underlying
ungraded twisted Hilbert bundle.

If we consider the graded tensor product of the twisted Hilbert bundle $E$ by
the Clifford algebra $C^{0,1}=\mathbb{C}\left[  x\right]  /(x^{2}-1),$ we get
another type of structural group we might call $\Gamma_{1}(H)$ which is simply
$U(H)\times U(H).$ The elements of degree $0$ are of type $(g,g),$ while the
ones of degree $1$ are of type $(g,-g).$ Algebraicallly, this reflects the
fact that over the complex numbers there are two types of $\mathbb{Z}%
/2$-graded Azumaya algebra, up to graded Morita equivalence, which are
$\mathbb{C}$ and $\mathbb{C}\times\mathbb{C}\mathrm{.}$ For simplicity's sake,
in the following discussion, we shall restrict ourselves to the first case
which is the group $\Gamma(H)$ above. We note however that for real graded
vector bundles, there are eight types of graded algebras (up to graded Morita
equivalence) to consider instead of two$,$ as noticed in \cite{DK}. They
correspond to the Clifford algebras $C^{0,n}$ for $n=0,1...,7,$ over the real numbers.

If $E$ and $F$ are two graded twisted Hilbert bundles of structural group
$\Gamma(H)$, a morphism $(g_{i})$ is of degree $0$ (resp. $1)$ if it is
represented locally by a matrix of type%
\[
\left(
\begin{array}
[c]{cc}%
u_{i} & 0\\
0 & v_{i}%
\end{array}
\right)  \qquad\text{ resp.\qquad}\left(
\begin{array}
[c]{cc}%
0 & u_{i}\\
v_{i} & 0
\end{array}
\right)  .
\]
From the previous category equivalences and the definitions in \cite{Karoubi},
we deduce the following theorem.

\begin{theorem}
Let $\lambda$ be a graded twist defined by two cocycles, with classes in
$H^{1}(X;\mathbb{Z}/2)$ and $H^{3}(X;\mathbb{Z})$ respectively. We consider
the set of homotopy classes of couples $(E,\nabla),$ where $E$ is a $\lambda
$-twisted graded Hilbert bundle and $\nabla$ a family of self-adjoint Fredholm
operators on $E$ which are of degree one. With the operation given by the
direct sum of couples, the group obtained is isomorphic to the $\lambda
$-twisted graded $K$-theory defined in \cite{Karoubi}.
\end{theorem}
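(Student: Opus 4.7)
The plan is to reduce the graded statement to a combination of the ungraded Fredholm description established earlier (triples $(E_0,E_1,D)$ and the lemma about passing to the Calkin quotient) and the module dictionary for graded twisted Hilbert bundles outlined just before the theorem. The key structural observation is that, since $\nabla$ is self-adjoint and of degree $1$, with respect to the grading $E=E_0\oplus E_1$ it is forced to have the off-diagonal shape
\[
\nabla=\begin{pmatrix} 0 & D^{\ast}\\ D & 0\end{pmatrix},
\]
where $D\colon E_0\to E_1$ is a Fredholm family between ordinary $\lambda$-twisted Hilbert bundles (both $E_0$ and $E_1$ are twisted by the ungraded part of $\lambda$). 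Conversely, any such $D$ produces a self-adjoint degree-one Fredholm $\nabla$ by the same formula. So, as sets, homotopy classes of couples $(E,\nabla)$ are in bijection with homotopy classes of triples $(E_0,E_1,D)$, and direct sum corresponds to direct sum.

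First I would make the above correspondence functorial and check that it is compatible with homotopies: a continuous path $\nabla_t$ of self-adjoint degree-one Fredholm operators is the same datum as a continuous path $D_t$ of Fredholm operators from $E_0$ to $E_1$, by the off-diagonal decomposition. This identifies the group defined in the statement with the group of homotopy classes of triples $(E_0,E_1,D)$ used earlier in this section. The lemma just proved then says that two such triples are homotopic iff the associated triples over $\underline{\mathcal{L}}_{\lambda}/\underline{\mathcal{K}}_{\lambda}$ are homotopic, so this group is precisely the Grothendieck group of the Banach functor
\[
\varphi\colon\mathcal{P}'(\underline{\mathcal{L}}_{\lambda})\longrightarrow\mathcal{P}'(\underline{\mathcal{L}}_{\lambda}/\underline{\mathcal{K}}_{\lambda}),
\]
in the graded sense, which is $K_0(\underline{\mathcal{K}}_{\lambda})$ for the graded algebra bundle.

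Next I would match this with the definition given in \cite{Karoubi}. That definition also proceeds via graded modules over a graded Azumaya-type algebra bundle representing the twist $(w_1,\lambda)\in H^1(X;\mathbb{Z}/2)\times H^3(X;\mathbb{Z})$; the category equivalence between $\lambda$-twisted graded Hilbert bundles and bundles of graded $\underline{\mathcal{L}}_{\lambda}$-modules (the graded analogue of the equivalence proved for the ungraded structural group $U(H)$) turns Karoubi's definition into the Grothendieck group of exactly the same functor $\varphi$ in the graded setting. Combining the two identifications gives the claimed isomorphism.

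The only genuine obstacle is the first step, namely verifying that the off-diagonal form of $\nabla$ behaves well under homotopy and direct sum within the framework of \emph{graded} twisted Hilbert bundles modelled on $\Gamma(H)$: one must check that the morphisms of degree $0$ (the diagonal matrices) are exactly the morphisms under which both $E_0$ and $E_1$ transform as ordinary $\lambda$-twisted Hilbert bundles, so that the operator $D$ is really a section of $\mathrm{HOM}(E_0,E_1)$ in the sense of Section~3. Once this compatibility with the graded structure is in place, everything else is a direct translation of the ungraded Fredholm picture together with the lemma on the Calkin quotient.
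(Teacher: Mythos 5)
Your first step contains a genuine gap. You write that, with respect to the grading $E=E_0\oplus E_1$, the self-adjoint degree-one operator $\nabla$ is forced to have the off-diagonal form
\[
\nabla=\begin{pmatrix} 0 & D^{\ast}\\ D & 0\end{pmatrix},\qquad D\colon E_0\to E_1,
\]
with $E_0,E_1$ \emph{globally defined} $\lambda$-twisted Hilbert bundles and $D$ an ordinary Fredholm family between them. But this presupposes a global splitting $E=E_0\oplus E_1$, which exists exactly when the $H^{1}(X;\mathbb{Z}/2)$ component of the graded twist vanishes. In general, the transition functions of a graded twisted Hilbert bundle modelled on $\Gamma(H)$ include antidiagonal matrices of the form $\begin{pmatrix}0&h_1\\h_2&0\end{pmatrix}$, i.e.\ elements of degree $1$; indeed, the class in $H^{1}(X;\mathbb{Z}/2)$ is precisely the obstruction recorded by the map $\Gamma(H)\to\mathbb{Z}/2$ applied to the transition cocycle. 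When that class is nontrivial some transition functions swap the two summands, so $E_0$ and $E_1$ exist only locally and the operator $D$ is not a global section of any $\mathrm{HOM}(E_0,E_1)$. Consequently your identification of couples $(E,\nabla)$ with ungraded triples $(E_0,E_1,D)$ fails in exactly the cases the theorem is meant to cover, and the reduction to the ungraded Fredholm picture (and to the Lemma on the Calkin quotient) collapses the graded group onto the ungraded one, which is a different group when the $\mathbb{Z}/2$-twist is nonzero.

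The worry you flag at the end — checking that degree-$0$ morphisms act componentwise — is not where the problem sits; the issue is the transition data itself, not the morphisms. The paper's argument sidesteps this entirely: it does not attempt an off-diagonal reduction but applies the \emph{graded} module dictionary directly (graded twisted Hilbert bundles $\leftrightarrow$ bundles of graded $\underline{\mathcal{L}}_{\lambda}$-modules, with the bundle of graded algebras $\mathrm{END}(E)$ having fibre $\mathcal{L}(H\oplus H)$), after which a pair $(E,\nabla)$ becomes exactly a graded module bundle with a self-adjoint degree-one Fredholm family — which is, verbatim, the definition of graded twisted $K$-theory in \cite{Karoubi}. Your final paragraph gestures towards this graded equivalence, and that part is on the right track; but it cannot be grafted onto the ungraded off-diagonal reduction of the first part, and the proof should instead run through the graded dictionary from the start without ever invoking a global $E_0\oplus E_1$ splitting.
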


\begin{remark}
One should point out that there is a variant of this Fredholm definition of
twisted $K$-theory on a base $X$ which is locally compact: the family of
Fredholm operators $\nabla$ must be an isomorphism outside a compact set (see
e.g. \cite{Atiyah Book} or \cite{Karoubi livre}). This remark will be
important for the definition of the Thom isomorphism in Section $6$.
\end{remark}

\begin{remark}
Whatever definition of graded or ungraded twisted $K$-theory we choose, the
group we obtain, denoted by $K_{\lambda}(X)$ in all cases, may be
\textquotedblleft derived\textquotedblright. One nice way to see this is to
notice that we are considering a $K$-group of special Banach algebras (or
$\mathbb{Z}$/2-graded Banach algebras, see \cite{Karoubi}), for instance
$A=\underline{\mathcal{K}}_{\lambda}.$ We then define $K_{\lambda}^{-n}(X)$ as
$K_{n}(A).$ By Bott periodicity for complex Banach algebras, we have
$K_{\lambda}^{-n}(X)\cong K_{\lambda}^{-n-2}(X).$ According to general
theorems on $K$-theory, one shows that%
\[
K_{\lambda}^{-n}(X)\cong Coker(K_{\lambda}(X)\rightarrow K_{\pi^{\ast}\lambda
}(X\times S^{n})),
\]
where $\pi:X\times S^{n}\rightarrow X$ is the canonical projection. We note
here that the smash product $X\wedge S^{n}$ cannot be used to define
$K_{\lambda}^{-n}(X)$, since there is no associated twist in the cohomology of
$X\wedge S^{n}$ in general.
\end{remark}

As a consequence, we may apply Mayer-Vietoris arguments to the direct sum
$K_{\lambda}(X)\oplus K_{\lambda}^{-1}(X),$ as for the $K$-theory of general
Banach algebras.

\section{Multiplicative structures}

Since we have defined twisted $K$-theory in three ways (at least in the non
graded case), we should investigate the possible multiplicative structure from
these different viewpoints and show that they coincide up to isomorphism.
These multiplicative structures were also investigated in a more general
framework in \cite{Laurent Tu Xu}.

The end result is a \textquotedblleft cup-product\textquotedblright%
\[
K_{\lambda}(X)\times K_{\mu}(X)\rightarrow K_{\lambda\mu}(X),
\]
where $\lambda$ and $\mu$ are two $2$-cocycles\footnote{See Appendix 8.3 for a
possible pairing if we replace $\lambda$ and $\mu$ by their cohomology classes
in $H^{2}(X;S^{1})\cong H^{3}(X;\mathbb{Z}).$} with values in $S^{1}$. Since
$K_{\lambda}(X)$ is the $K$-theory of the Banach algebra \underline
{$\mathcal{K}$}$_{\lambda}$ in general, it is enough to define a continuous
bilinear pairing between nonunital Banach algebras%
\[
\varphi:\underline{\mathcal{K}}_{\lambda}\times\underline{\mathcal{K}}_{\mu
}\rightarrow\mathcal{K}_{\lambda\mu},
\]
such that $\varphi(aa^{\prime},bb^{\prime})=\varphi(a,b)\varphi(a^{\prime
},b^{\prime}).$ The implication that such a $\varphi$ induces a pairing
between $K$-groups is not completely obvious and relies on excision in $K$-theory.

To define the pairing $\varphi$, we observe that if $E_{\lambda}$ is a twisted
Hilbert bundle with twist $\lambda$ and $F_{\mu}$ another one with twist
$\mu,E\widehat{\otimes}F$ is a twisted Hilbert bundle with twist $\lambda\mu.$
Here, the fibres of $E_{\lambda}\widehat{\otimes}F_{\mu}$ are the Hilbert
tensor product of the fibres of $E$ and $F$ respectively (we implicitly
identify the Hilbert tensor product of $H\otimes H$ with $H$ since it is
infinite dimensional). Therefore, we have a pairing between Banach bundles%
\[
\mathrm{END}(E_{\lambda})\times\mathrm{END}(F_{\mu})\rightarrow\mathrm{END}%
(E_{\lambda}\widehat{\otimes}F_{\mu}),
\]
which is bilinear and continuous. If we take continuous sections, we deduce
the map $\varphi$ required. We note that $\varphi$ also induces a ring map%
\[
\underline{\mathcal{K}}_{\lambda}\widehat{\otimes}\underline{\mathcal{K}}%
_{\mu}\rightarrow\mathcal{K}_{\lambda\mu}.
\]
The symbol $\widehat{\otimes}$ now denotes the completed projective tensor
product of Grothendieck. The inclusion of Banach algebras
\[
\underline{\mathcal{K}}_{\lambda}\widehat{\otimes}\underline{\mathcal{K}}%
_{\mu}\subset\mathcal{K}_{\lambda\mu}%
\]
is not an isomorphism. However, when $X$ varies, both functors define a
(twisted) cohomology theory which is the usual $K$-theory when $X$ is
contractible. Therefore, by a standard Mayer-Vietoris argument and Bott
periodicity, this inclusion induces an isomorphism on $K$-groups.

We should note that this cup-product is much simpler to define if $\left[
\lambda\right]  $ and $\left[  \mu\right]  $ are torsion classes in the
cohomology. According to Section $3,$ we may then assume that $E$ and $F$ are
finite dimensional twisted vector bundles. The cup-product is simply the usual
one\footnote{As often, we underline the algebra of sections of the algebra
bundles involved.}%
\[
K(\underline{A})\times K(\underline{B})\rightarrow K(\underline{A\otimes B})
\]
where $A=\mathrm{END}(E_{\lambda})$ and $B=\mathrm{END}(F_{\mu})$ are bundles
of finite dimensional algebras, with matrix algebras as fibres.

Coming back to the general case, we now use our second definition of twisted
$K$-theory in order to get a cup-product between $K$-groups of unital rings.
According to Section $4,$ we have exact sequences of Banach algebras%
\begin{align*}
0  &  \rightarrow\underline{\mathcal{K}}_{\lambda}\rightarrow\underline
{\mathcal{A}}_{\lambda}\rightarrow\underline{\mathcal{B}}_{\lambda}%
\rightarrow0\\
0  &  \rightarrow\underline{\mathcal{K}}_{\mu}\rightarrow\underline
{\mathcal{A}}_{\mu}\rightarrow\underline{\mathcal{B}}_{\mu}\rightarrow0,
\end{align*}
which split as exact sequences of Banach spaces. Therefore, we deduce another
exact sequence by taking completed projective tensor products of Banach
algebras%
\[
0\rightarrow\underline{\mathcal{K}}_{\lambda}\widehat{\otimes}\underline
{\mathcal{K}}_{\mu}\rightarrow\underline{\mathcal{A}}_{\lambda}\widehat
{\otimes}\underline{\mathcal{A}}_{\mu}\rightarrow\mathcal{D}_{\lambda,\mu
}\rightarrow0.
\]
The Banach algebra $\mathcal{D}_{\lambda,\mu}$ is the following fibre product%
\[%
\begin{array}
[c]{ccc}%
\mathcal{D}_{\lambda,\mu} & \rightarrow & \underline{\mathcal{A}}_{\lambda
}\widehat{\otimes}\underline{\mathcal{B}}_{\mu}\\
\downarrow &  & \downarrow\\
\underline{\mathcal{B}}_{\lambda}\widehat{\otimes}\underline{\mathcal{A}} &
\rightarrow & \underline{\mathcal{B}}_{\lambda}\widehat{\otimes}%
\underline{\mathcal{B}}_{\mu}%
\end{array}
\]
Since the algebras $\underline{\mathcal{B}}_{\lambda}$ and $\underline
{\mathcal{B}}_{\mu}$ are flabby, the algebra $\mathcal{D}_{\lambda,\mu}$ has
trivial $K$-groups. It follows that the map%
\[
\underline{\mathcal{K}}_{\lambda}\widehat{\otimes}\underline{\mathcal{K}}%
_{\mu}\rightarrow\underline{\mathcal{A}}_{\lambda}\widehat{\otimes}%
\underline{\mathcal{A}}_{\mu}%
\]
is a $K$-theory equivalence. Therefore, we may also define a cup-product%
\[
K(\underline{\mathcal{A}}_{\lambda})\times K(\underline{\mathcal{A}}_{\mu
})\rightarrow K(\underline{\mathcal{A}}_{\lambda\mu}),
\]
as the following composition%
\[
K(\underline{\mathcal{A}}_{\lambda})\times K(\underline{\mathcal{A}}_{\mu
})\rightarrow K(\underline{\mathcal{A}}_{\lambda}\widehat{\otimes}%
\underline{\mathcal{A}}_{\mu})\cong K(\underline{\mathcal{K}}_{\lambda
}\widehat{\otimes}\underline{\mathcal{K}}_{\mu})\overset{\cong}{\rightarrow
}K(\underline{\mathcal{K}}_{\lambda\mu})\cong K(\underline{\mathcal{A}%
}_{\lambda\mu}).
\]
If we identity $K(\underline{\mathcal{A}}_{\lambda}\widehat{\otimes}%
\underline{\mathcal{A}}_{\mu})$ with $K(\underline{\mathcal{A}}_{\lambda\mu})
$ by the previous sequence of isomorphisms, we may take as the definition for
our cup-product the pairing%
\[
K(\underline{\mathcal{A}}_{\lambda})\times K(\underline{\mathcal{A}}_{\mu
})\rightarrow K(\underline{\mathcal{A}}_{\lambda}\widehat{\otimes}%
\underline{\mathcal{A}}_{\mu}).
\]

We now come to the third definition of the cup-product in terms of Fredholm
operators. As is well known (see e.g. \cite{Atiyah Book}, \cite{DK} or
\cite{Karoubi}), one advantage of this definition of twisted $K$-theory (for
$\left[  \lambda\right]  $ of finite or infinite order) is a handy description
of the cup-product. In the ungraded case, it is more convenient still to view
$E=E_{1}\oplus E_{1}$ as a $\mathbb{Z}/2$-graded twisted bundle and
replace\footnote{More correctly, we should write $D$ as a section of the
bundle $\mathrm{HOM}(E_{0},E_{1}).$} $D:E_{0}\rightarrow E_{1}$ by the
following operator $\nabla$ which is self-adjoint and of degree 1:%
\[
\nabla=\left(
\begin{array}
[c]{cc}%
0 & D^{\ast}\\
D & 0
\end{array}
\right)  .
\]
The cup-product of $(E,\nabla)$ with another couple of the same type
$(E^{\prime},\nabla^{\prime})$ is simply defined by the formula%
\[
(E,\nabla)\smallsmile(E^{\prime},\nabla^{\prime})=(E\widehat{\otimes}%
E^{\prime},\nabla\widehat{\otimes}1+1\widehat{\otimes}\nabla^{\prime}).
\]
Here the symbol $\widehat{\otimes}$ denotes the graded and Hilbert tensor
product. We notice that if $E$ is associated to the twist $\lambda,$
$E^{\prime}$ to the twist $\lambda^{\prime},$ the cup-product is associated to
the twist $\lambda\cdot\lambda^{\prime}$, a cocycle whose cohomology class is
the sum of the two related cohomology classes in $H^{2}(X;S^{1})$.

It is not completely obvious that this third definition of the cup-product is
equivalent to the previous one with the bundles \underline{$\mathcal{K}$%
}$_{\lambda}$ or \underline{$\mathcal{A}$}$_{\lambda}$. In order to prove this
technical point, we use the results of Appendix 8.2 describing explicitly the
isomorphism between $K(\underline{\mathcal{K}}_{\lambda})$ and $K_{1}%
(\underline{\mathcal{B}}_{\lambda}/\underline{\mathcal{K}}_{\lambda}).$ In
fact, any element of $K_{1}(\underline{\mathcal{B}}_{\lambda}/\underline
{\mathcal{K}}_{\lambda})$ is the cup-product of an element $u$ of
$K(\underline{\mathcal{K}}_{\lambda})$ by a generator $\tau$ of
\[
K_{1}(\mathcal{L}/\mathcal{K})\cong\mathbb{Z}.
\]
This generator is classically defined by the shift (as a Fredholm operator).
Moreover, we may assume that $u$ is induced by a self-adjoint involution on
$M_{2}((\underline{\mathcal{K}}_{\lambda})^{+}),$ where $(\underline
{\mathcal{K}}_{\lambda})^{+}$ is the algebra $\underline{\mathcal{K}}%
_{\lambda}$ with a unit added. On the other hand, both $K_{\ast}%
(\underline{\mathcal{K}}_{\lambda})$ and $K_{1+\ast}(\underline{\mathcal{B}%
}_{\lambda}/\underline{\mathcal{K}}_{\lambda})$ may be considered as (twisted)
cohomology theories on $X.$ Therefore, again by a Mayer-Vietoris argument, the
formula above for the cup-product with Fredholm operators has to be compared
with the previous one only when $X$ is reduced to a point, a case which is obvious.

\bigskip

This Fredholm multiplicative setting has the advantage that it may be extended
to the graded version of twisted $K$-theory by the same formula%
\[
(E,\nabla)\smallsmile(E^{\prime},\nabla^{\prime})=(E\widehat{\otimes}%
E^{\prime},\nabla\widehat{\otimes}1+1\widehat{\otimes}\nabla^{\prime}).
\]
If $(\left[  \lambda_{1}\right]  ,\left[  \lambda_{3}\right]  )$ and $(\left[
\lambda_{1}^{\prime}\right]  ,\left[  \lambda_{3}^{\prime}\right]  )$ are the
twists of $E$ and $E^{\prime}$ respectively, the twist of $E\widehat{\otimes
}E^{\prime}$ in cohomology is ($\left[  \mu_{1}\right]  ,\left[  \mu
_{3}\right]  ),$ where%
\[
\left[  \mu_{1}\right]  =\left[  \lambda_{1}\right]  +\left[  \lambda
_{1}^{\prime}\right]
\]
and%
\[
\left[  \mu_{3}\right]  =\left[  \lambda_{3}\right]  +\left[  \lambda
_{3}^{\prime}\right]  +\beta(\left[  \lambda_{1}\right]  \cdot\left[
\lambda_{1}^{\prime}\right]  ).
\]
Here $\beta:H^{2}(X;\mathbb{Z}/2)\rightarrow H^{3}(X;\mathbb{Z})$ is the
Bockstein homomorphism (compare with \cite[p. 10]{DK}). Thanks to the Thom
isomorphism which is proved in \cite{Karoubi} (see also the next section and
\cite{Carey-Wang}), this graded cup-product is compatible with the ungraded
one defined on the Thom space of the orientation bundle determined by the
graded twist.

\section{Thom isomorphism and operations in twisted $K$-theory}

This Section is just a short rewriting of the corresponding sections 4 and 7
of \cite{Karoubi}, with the point of view of twisted Hilbert bundles. It is
added here for completeness' sake.

\medskip

In order to define the Thom isomorphism in twisted $K$-theory, as in
\cite{Karoubi} and \cite{Carey-Wang} with our new point of view, we need to
consider twisted Hilbert bundles $E$ with a Clifford module structure. Such a
structure is given by a finite dimensional real vector bundle $V$ on $X,$
provided with a positive metric $q$ and an action of $V$ on $E,$ such that
$(v)^{2}=q(v).1$. Now let $\lambda$ be a graded twist, given by a covering
$\mathcal{U}\mathbb{\ }$= $(U_{i})$ together with a couple $(\lambda
_{1},\lambda_{3})$ consisting of a $1$-cocycle with values in $\mathbb{Z}/2$
and a $2$-cocycle with values in $S^{1}.$ We define the Grothendieck group
$K_{\lambda}^{V}(X)$ from the set of homotopy classes of couples%
\[
(E,\nabla),
\]
as follows: $E$ is a $\mathbb{Z}/2$-graded twisted Hilbert bundle which is
also a graded $C(V)$-module, $V$ acting by self-adjoint endomorphisms of
degree $1$. Moreover, the family of Fredholm operators $\nabla$ must satisfy
the following properties

1) $\nabla$ is self-adjoint and of degree $1,$ as in the previous section,

2) $\nabla$ anticommutes with the elements $v$ in $V.$

This group is not entirely new. Using our dictionary relating twisted Hilbert
bundles and module bundles, we described it in great detail in \cite[\S \ 4]%
{Karoubi}$.$ We should also notice that this structure of $C(V)$-module may be
integrated into the twist $\lambda$: if $w_{1}=w_{1}(V)$ and $w_{2}=w_{2}(V)$
are the first two Stiefel-Whitney classes of $V,$ one has to replace $\lambda$
by the sum of $\lambda$ and $C(V)$ in the graded Brauer group (this was one of
the main motivations for the paper \cite{DK}). More precisely, the resulting
cohomology classes are%
\[
\left[  \lambda_{1}\right]  +w_{1}(V)
\]
in degree one and%
\[
\left[  \lambda_{3}\right]  +\beta(\left[  \lambda_{1}\right]  \cdot
w_{1})+\beta(w_{2})
\]
in degree $3.$

\bigskip

Using our previous reference \cite{Karoubi}, we are now able to define the
Thom isomorphism%

\[
t:K_{\lambda}^{V}(X)\rightarrow K_{\pi^{\ast}\lambda}(V)
\]
in simpler terms. If $\pi$ denotes the projection $V\rightarrow X,$ and if
$(E,\nabla)$ defines an element of the group $K_{\lambda}^{V}(X),$ we define
$t(E,\nabla)$ as the couple $(\pi^{\ast}(E),\nabla^{\prime}),$ where
$\nabla^{\prime}$ is defined over a point $v$ of $V$, with projection $x,$ by
the formula%
\[
\nabla_{v}^{\prime}=v+\nabla_{x}.
\]
We recognize here the formula already given in \cite{Karoubi}: we have just
replaced module bundles by twisted Hilbert bundles.

\bigskip\bigskip

Operations on twisted $K$-theory have already been defined in many references
\cite{DK}, \cite{Atiyah and Segal}, \cite{Karoubi}. Twisted Hilbert bundles
give a nice framework to redefine them. For simplicity's sake, we restrict
ourselves to ungraded twisted $K$-groups.

If we start with an element $(E,\nabla)$ defining an element of $K_{\lambda
}(X)$ as at the end of Section 4, its $k^{th}$-power\footnote{where the symbol
$\widehat{\otimes}$ denotes again the graded Hilbert tensor product.}%
\[
(E^{\widehat{\otimes}k},\nabla\widehat{\otimes}...\widehat{\otimes
}1+...+1\widehat{\otimes}...\widehat{\otimes}\nabla)
\]
has an obvious action of the symmetric group $S_{k}.$ We should notice that
the twist of the $k^{th}$-power is $\lambda^{k}.$ According to Atiyah's
philosophy \cite{Atiyah Book}, the $k^{th}$-power defines a map%
\[
K_{\lambda}(X)\rightarrow K_{\lambda^{k}}(X)\otimes_{\mathbb{Z}}R(S_{k}),
\]
where $R(S_{k})$ denotes the complex representation ring of $S_{k}.$
Therefore, any $\mathbb{Z}$\textrm{-}homomorphism%
\[
R(S_{k})\rightarrow\mathbb{Z}%
\]
gives rise to an operation in twisted $K$-theory. In particular, the
Grothendieck exterior powers and the Adams operations may be defined in
twisted $K$-theory, using Atiyah's method.

As an interesting $\mathbb{Z}$-homomorphism from $R(S_{k})$ to $\mathbb{Z},$
one may choose the map which associates to a complex representation $\rho$ the
trace of $\rho(c_{k}),$ where $c_{k}$ is the cycle $(1,2,...,k),$ a trace
which is in fact an integer. The resulting homomorphism%
\[
K_{\lambda}(X)\rightarrow K_{\lambda^{k}}(X)
\]
is quite explicit. We associate to $F=(E,\nabla)$ the Gauss sum
\[%
{\displaystyle\sum}
(F^{\otimes k})_{n}\otimes\omega^{n}%
\]
in the group $K_{\lambda^{k}}(X)\otimes_{\mathbb{Z}}\Omega_{k}$, where
$\Omega_{k}$ is the ring of $k$-cyclotomic integers. In this sum, $\omega$ is
a primitive $k^{th}$-root of unity. The element $(F^{\otimes k})_{n}$ is the
eigenmodule associated to the eigenvalue $\omega^{n}$ of a generator of the
cyclic group $C_{k}$ acting on $F^{\otimes k}$. This sum belongs in fact to
$K_{\lambda^{k}}(X),$ as a subgroup of $K_{\lambda^{k}}(X)\otimes\Omega_{k}.$
As shown by Atiyah \cite{Atiyah Book}, we get this way a nice alternative
definition of the Adams operation $\Psi^{k}.$

\begin{remark}
If the class of $\lambda$ in $H^{3}(X;\mathbb{Z}\mathrm{)}$ is of finite
order, it is not necessary to consider twisted Hilbert bundles and Fredholm
operators. One just deal with finite dimensional twisted vector bundles as in
Section $3.$
\end{remark}

\begin{remark}
One should notice that operations are much more delicate to define in graded
twisted $K$-theory, even for coefficients $\left[  \lambda\right]  $ of finite
order in $H^{3}(X;\mathbb{Z})$. This was pointed out in \cite{DK} and recalled
in \cite{Karoubi}. Fredholm operators were already introduced in \cite{DK} in
order to deal with this problem, before subsequent works on twisted $K$-theory.
\end{remark}

\section{Connections and the Chern homomorphism}

Let us now assume that $X$ is a manifold. The previous definitions make sense
in the differential category. The fact that we get the same $K$-groups is more
or less standard and relies on arguments going back to Steenrod
\cite{Steenrod}. As an illustrative example, the \v{C}ech cohomologies
$H^{1}(X;GL_{n}(\mathbb{C}))$ and $H^{1}(X;PGL_{n}(\mathbb{C}))$ may be
computed with differential cochains. Therefore the classification of
topological algebra bundles (with fibre $M_{n}(C))$ is the same in the
differential category. The same general result is true for module bundles and
therefore for twisted $K$-theory, if we choose differential $2$-cocycles
$\lambda$ with values in $S^{1}$ to parametrize the twisted $K$-groups.

In the differential category, the definition of the Chern homomorphism between
twisted $K$-theory and \textquotedblleft twisted cohomology\textquotedblright%
\ was given in many papers \cite{Atiyah and Segal 2}, \cite{Mathai Stevenson},
\cite{Tsygan}, \cite{Tu et Xu}, \cite{Carey-Wang2}, and \cite{Gorokhovsky}.
Our method is more elementary and is based on the classical definitions of
Chern-Weil theory applied to twisted bundles\footnote{For the classical
computations, we refer to the books \cite[pg. 78]{Kobayashi-Nomizy} and
\cite{Karoubi CRAS} for instance.}. We start with twisted finite dimensional
bundles which are easier to handle. However, as we shall see later on, the
same method may be applied to infinite dimensional bundles in the spirit of
Section 4.

Let $E$ be a twisted vector bundle of rank\footnote{The rank may vary above
different connected component of $X.$} $n,$ defined on a covering
$\mathcal{U}$ $=(U_{i})$ by transition functions $(g_{ji}),$ with the twisted
cocycle condition%
\[
g_{ki}=g_{kj}\cdot g_{ji}\cdot\lambda_{kji},
\]
as in Section $3.$ We assume that all functions are of class $C^{\infty},$
which does not change the classification problem for twisted bundles as we
have seen previously.

\begin{defn}
A connection $\Gamma$ on $E$ is given by $(n\times n)$-matrices $\Gamma_{i}$
of $1$-differential forms on $U_{i}$ such that on $U_{i}\cap U_{j}$ we have
the relation%
\[
\Gamma_{i}=g_{ji}^{-1}\cdot\Gamma_{j}\cdot g_{ji}+g_{ji}^{-1}\cdot
dg_{ji}+\omega_{ji}.1.
\]
Here $\omega_{ji}$ is a $1$-differential form related to the $\lambda_{kji}$
by the following relation%
\[
\omega_{ji}-\omega_{ki}+\omega_{kj}=\lambda_{kji}^{-1}\cdot d\lambda_{kji}.
\]

\end{defn}

\bigskip

Moreover, from the relation above with the $\Gamma^{\prime}$s, we deduce that
$\omega_{ij}=-\omega_{ji}.$ If we take the differential of the previous
relation, we also get%
\[
d\omega_{ji}-d\omega_{ki}+d\omega_{kj}=0.
\]
In the applications below, $\omega$ will be a differential form with values in
$i\mathbb{R},$ where\footnote{The two different meanings of the symbol $"i"$
are clear from the context.} $i=\sqrt{-1}$ (if the $g_{ji}$ are unitary operators).

\bigskip

\begin{ex}
(which shows the existence of such connections). Let $(\alpha_{k})$ be a
partition of unity associated to the covering $\mathcal{U}$. We then consider
the \textquotedblleft barycentric connection\textquotedblright\ defined by the
formula%
\[
\Gamma_{i}=\sum_{k}\alpha_{k}\cdot g_{ki}^{-1}\cdot dg_{ki}.
\]
Since $g_{ki}=g_{kj}\cdot g_{ji}\cdot\lambda_{kji},$ we have the following
expansion%
\[
g_{ki}^{-1}\cdot dg_{ki}=g_{ji}^{-1}\cdot(g_{kj}^{-1}\cdot dg_{kj})\cdot
g_{ji}+g_{ji}^{-1}\cdot dg_{ji}+\lambda_{kji}^{-1}\cdot d\lambda_{kji}.
\]
Therefore, on $U_{i}\cap U_{j}$ we have the expected identity%
\[
\Gamma_{i}=g_{ji}^{-1}\cdot\Gamma_{j}\cdot g_{ji}+g_{ji}^{-1}\cdot
dg_{ji}+\omega_{ji}\cdot1,
\]
where%
\[
\omega_{ji}=\sum_{k}\alpha_{k}\cdot\lambda_{kji}^{-1}\cdot d\lambda_{kji}.
\]

\end{ex}

\bigskip

\begin{remark}
It is clear from the definition that the space of connections on $E$ is an
affine space: if $\Gamma$ and $\nabla$ are two connections on $E,$ for any
real number $t,$ $(1-t)\Gamma+t\nabla$ is also a connection$.$
\end{remark}

\bigskip\smallskip

We have choosen a definition of a connexion in terms of \textquotedblleft
local coordinates\textquotedblright. However, we have to check how connections
correspond when we change them. In other terms, let $(\alpha)$ be an
isomorphism from the coordinate bundle $(h)$ to $(g)$ as in Section 1.
According to Formula(\ref{morphisms}), we then have the relation%
\[
g_{ji}\cdot\alpha_{i}=\alpha_{j}\cdot h_{ji}%
\]
Associated to this morphism, we define the pull back $\alpha^{\ast}(\Gamma)$
of the connection $(\Gamma)$ as locally defined on the coordinate bundle $(h)
$ by the formula%
\[
\nabla_{i}=\alpha_{i}^{-1}\cdot\Gamma_{i}\cdot\alpha_{i}+\alpha_{i}^{-1}\cdot
d\alpha_{i}%
\]
In order for this to make sense, we have to check the relation%
\[
\nabla_{i}=h_{ji}^{-1}\cdot\nabla_{j}\cdot h_{ji}+h_{ji}^{-1}\cdot
dh_{ji}+\omega_{ji}.1.
\]
which is slightly tedious. We start from the formula%
\[
\Gamma_{i}=g_{ji}^{-1}\cdot\Gamma_{j}\cdot g_{ji}+g_{ji}^{-1}\cdot
dg_{ji}+\omega_{ji}.1,
\]
where we replace $g_{ji}$ by $\alpha_{j}\cdot h_{ji}\cdot\alpha_{i}^{-1}$. We
also replace $dg_{ji}$ by%
\[
dg_{ji}=d\alpha_{j}\cdot h_{ji}\cdot\alpha_{i}^{-1}+\alpha_{j}\cdot
dh_{ji}\cdot\alpha_{i}^{-1}-\alpha_{j}\cdot h_{ji}\cdot\alpha_{i}^{-1}\cdot
d\alpha_{i}\cdot\alpha_{i}^{-1}.
\]
We then get%
\[
\nabla_{i}=\alpha_{i}^{-1}\cdot(g_{ji}^{-1}\cdot\Gamma_{j}\cdot g_{ji}%
+g_{ji}^{-1}\cdot dg_{ji}+\omega_{ji}.1.)\cdot\alpha_{i}+\alpha_{i}^{-1}\cdot
d\alpha_{i}%
\]%
\[
=\alpha_{i}^{-1}\cdot(g_{ji}^{-1}\cdot\Gamma_{j}\cdot g_{ji})\cdot\alpha_{i}%
\]%
\[
+\alpha_{i}^{-1}\cdot g_{ji}^{-1}\cdot(d\alpha_{j}\cdot h_{ji}\cdot\alpha
_{i}^{-1}+\alpha_{j}\cdot dh_{ji}\cdot\alpha_{i}^{-1}-\alpha_{j}\cdot
h_{ji}\cdot\alpha_{i}^{-1}\cdot d\alpha_{i}\cdot\alpha_{i}^{-1})\cdot
\alpha_{i}%
\]%
\[
+\omega_{ji}.1.
\]%
\[
=h_{ji}^{-1}\cdot(\nabla_{j}-\alpha_{j}^{-1}\cdot d\alpha_{j})\cdot h_{ji}%
\]%
\[
+h_{ji}^{-1}\cdot\alpha_{j}^{-1}\cdot d\alpha_{j}\cdot h_{ji}+h_{ji}^{-1}\cdot
dh_{ji}-\alpha_{i}^{-1}\cdot d\alpha_{i}+\alpha_{i}^{-1}\cdot d\alpha
_{i}+\omega_{ji}.1.
\]%
\[
=h_{ji}^{-1}\cdot\nabla_{j}\cdot h_{ji}+h_{ji}^{-1}\cdot dh_{ji}+\omega
_{ji}.1,
\]
which is the expected formula.

The \textquotedblleft local curvatures\textquotedblright\ $R_{i}$ associated
to the $\Gamma_{i}$ are given by the usual formula\footnote{As in classical
Chern-Weil theory, one may also write $1/2\left[  \Gamma_{i},\Gamma
_{i}\right]  $ instead of $(\Gamma_{i})^{2}.$}%
\[
R_{i}=d\Gamma_{i}+(\Gamma_{i})^{2}.
\]
Unfortunately, the traces of these local curvatures do not agree on $U_{i}\cap
U_{j}$, since a simple computation as above leads to the relation%
\[
R_{i}=g_{ji}^{-1}\cdot R_{j}\cdot g_{ji}+d\omega_{ji}.1.
\]
However, using a partition of unity $(\alpha_{i}),$ as in the case of the
barycentric connection, we may define a family of \textquotedblleft twisted
curvatures\textquotedblright\ by the following formula, where $m=1,2,...$%
\[
R_{(m)}=\sum_{i}\alpha_{i}\cdot(R_{i})^{m}.
\]
We now define a family of \textquotedblleft Chern characters\textquotedblright%
\ $Ch_{(m)}(E,\Gamma)$ as%
\[
Ch_{(m)}(E,\Gamma)=Tr(R_{(m)}).
\]
We should notice that $Ch_{(m)}(E,\Gamma)$ belongs to the vector space of
differential forms with values in $(i)^{m}\mathbb{R},$ since the $g_{kl}$ are
unitary matrices. By convention, we put%
\[
Ch_{(0)}(E,\Gamma)=n.
\]
The differential of $Ch_{(1)}$ is%
\[
d(Ch_{(1)}(E,\Gamma))=\sum_{i}\alpha_{i}\cdot.Tr(dR_{i})+\sum_{i}d\alpha
_{i}\cdot Tr(R_{i}).
\]
It is well known (and easy to prove) that
\[
Tr(dR_{i})=Tr(d\Gamma_{i}\cdot\Gamma_{i}-\Gamma_{i}\cdot d\Gamma_{i})=0.
\]
On the other hand, the relation between $R_{i}$ and $R_{j}$ above leads to the
following identity between differential forms on $U_{j}:$%
\[
\sum_{i}d\alpha_{i}\cdot Tr(R_{i})=(\sum_{i}d\alpha_{i}\cdot Tr(R_{j}%
))+n\sum_{i}d\alpha_{i}\cdot d\omega_{ji}=n\sum_{i}d\alpha_{i}\cdot
d\omega_{ji}.
\]
The $3$-differential form $\theta_{j}=%
{\displaystyle\sum\limits_{k}}
d\alpha_{k}\cdot d\omega_{jk}$ is clearly closed on $U_{j}.$ Moreover, on
$U_{i}\cap U_{j}$ we have%
\[
\theta_{j}-\theta_{i}=%
{\displaystyle\sum\limits_{k}}
d\alpha_{k}\cdot(d\omega_{jk}-d\omega_{ik})=%
{\displaystyle\sum\limits_{k}}
d\alpha_{k}\cdot d\omega_{ji}=0,
\]
according to the relation above between various $d\omega^{\prime}s.$
Therefore, the $\left\{  \theta_{i}\right\}  $ define a global $3$%
-differential form $\theta$ on the manifold $X$ with values in $i\mathbb{R}$.
This $3$-cohomology class is the opposite of the image of $\lambda$ by the
connecting homomorphism\footnote{See Appendix 8.1 for a proof of this
statement..}$:$%
\[
H^{2}(X;S^{1})\rightarrow H^{3}(X;2i\pi\mathbb{Z}),
\]
associated to the classical exact sequence of sheaves:%
\[
0\rightarrow2i\pi\mathbb{Z}\rightarrow i\mathbb{R}\overset{\exp}%
{\mathbb{\rightarrow}}\mathbb{S}^{1}\rightarrow0,
\]
followed by the map\footnote{Here $\mathbb{R}^{\delta}$ denotes now the field
$\mathbb{R}$ with the discrete topology.}
\[
H^{3}(X;2i\pi\mathbb{Z})\rightarrow H^{3}(X;i\mathbb{R}^{\delta}),
\]
deduced from the inclusion $\mathbb{Z\subset R}^{\delta}.$ Summarizing the
above discussion, we get our first relation%
\[
d(Ch_{(1)}(E,\Gamma))=n\cdot\theta.
\]
Analogous computations can be made with $R_{(2)},R_{(3)},$ etc.. For an
arbitrary $m$,
\[
d(Ch_{(m)}(E,\Gamma))=%
{\displaystyle\sum\limits_{i}}
\alpha_{i}\cdot Tr(d(R_{i})^{m})+%
{\displaystyle\sum\limits_{i}}
d\alpha_{i}\cdot Tr(R_{i})^{m}.
\]
Since $Tr(d(R_{i})^{m})=0$ for the same reasons as above, we have%
\[
d(Ch_{(m)}(E,\Gamma))=%
{\displaystyle\sum\limits_{i}}
d\alpha_{i}\cdot Tr(R_{i})^{m}.
\]
On the other hand, from the relation%

\[
Tr(R_{i})^{m}=Tr(R_{j})^{m}+mTr(R_{j})^{m-1}\cdot d\omega_{ji},
\]
we deduce the following identity between differential forms on $U_{j}:$%

\[%
{\displaystyle\sum\limits_{i}}
d\alpha_{i}\cdot Tr(R_{i})^{m}=%
{\displaystyle\sum\limits_{i}}
d\alpha_{i}\cdot Tr(R_{j})^{m}+%
{\displaystyle\sum\limits_{i}}
m\cdot d\alpha_{i}\cdot Tr(R_{j})^{m-1}\cdot.d\omega_{ji}%
\]%
\[
=m\cdot Tr(R_{j})^{m-1}\cdot\theta.
\]
Therefore,
\[%
{\displaystyle\sum\limits_{i}}
d\alpha_{i}\cdot Tr(R_{i})^{m}=%
{\displaystyle\sum\limits_{j}}
\alpha_{j}%
{\displaystyle\sum\limits_{i}}
d\alpha_{i}\cdot Tr(R_{i})^{m}%
\]%
\[
=%
{\displaystyle\sum\limits_{j}}
m\cdot\alpha_{j}\cdot Tr(R_{j})^{m}\cdot\theta=m\cdot Ch_{(m-1)}%
(E,\Gamma)\cdot\theta.
\]
Summarizing again, we get the relation%

\[
d(Ch_{(m)}(E,\Gamma))=m\cdot Ch_{(m-1)}(E,\Gamma)\cdot\theta.
\]
We now define the total Chern character of $(E,\Gamma)$ with values in the
even de Rham forms\footnote{More precisely, $\Omega^{2k}(X)$ is the vector
space of $2k$-differential forms with values in $(i)^{k}\mathbb{R}$.}
\[
\Omega^{0}(X)\oplus\Omega^{2}(X)\oplus...\oplus\Omega^{2m}(X)\oplus...
\]
by the following formula:%
\[
Ch(E,\Gamma)=Ch_{(0)}(E,\Gamma)+Ch_{(1)}(E,\Gamma)+\frac{1}{2!}Ch_{(2)}%
(E,\Gamma)+...
\]%
\[
+\frac{1}{m!}Ch_{(m)}(E,\Gamma)+...
\]
We have choosen the coefficients in front of the $Ch_{(m)}$ such that
$Ch(E,\Gamma)$ is a cycle in the even/odd de Rham complex with the
differential given by $D=d-.\theta,$ where $.\theta$ is the map defined by the
cup-product with $\theta.$

In Appendix 8.3, we prove by classical considerations that this total Chern
character is well defined as a twisted cohomology class and does not depend on
the connection $\Gamma$ and on the partition of unity. This remark is also
valid in the infinite dimensional case which will be studied later on.

For the time being, since we consider finite dimensional bundles, the class
$\theta$ in $H^{3}(X;i\mathbb{R}^{\delta})$ is reduced to $0.$ Therefore, by
classical considerations on complexes using exponentials of even forms
\cite{Atiyah and Segal 2}, we see that the target of this special Chern
character reduces to the classical one. Moreover, we may also consider twisted
bundles over $X\times S^{1},$ which enables us to define a Chern character
from odd twisted $K$-groups to odd twisted cohomology. From standard
Mayer-Vietoris arguments and Bott periodicity, we deduce that the Chern
character induces an isomorphism between $K_{\lambda}(X)\otimes_{\mathbb{Z}%
}\mathbb{R}$ and $H^{even}(X;\mathbb{R}).$

\medskip

We want to extend the previous considerations to the case when the cohomology
class $\left[  \lambda\right]  $ is of infinite order. For this, we use the
second definition of twisted $K$-theory in terms of twisted principal bundles
associated to the group $G$ of couples $(g,h)$ such that $g $ and $h$ are
invertible operators in $\mathcal{L}(H)$ with $g-h$ compact. However, in order
to be able to take traces, we have to modify slightly this group by assuming
moreover that $g-h$ is a trace class operator, i.e. belongs to $L^{1}.$ By
abuse of notation, we still call $\mathcal{A}\mathbb{\ }$the
algebra\footnote{The norm of an element $(g,h)$ is the sum of the operator
norm on $g$ and the $L^{1}$-norm on $g-h.$} of couples $(g,h)\in
\mathcal{L}\times\mathcal{L}$ such that $g-h\in L^{1}.$ Using the classical
density theorem in topological $K$-theory \cite[pg. 109]{Karoubi livre}, it is
easy to show that we get the same twisted $K$-theory as for $g-h$ compact. We
may also choose the transition functions to be C$^{\infty}$, as we did in the
finite dimensional case.

The computations in the finite dimensional case may now be easily transposed
in this framework if we consider transition functions $(g_{ji},h_{ji})$ in the
group\footnote{More precisely, in the group $Aut(P)\subset G=\mathcal{A}%
^{\ast};$ see below.} $G=\mathcal{A}^{\ast}$ and take \textquotedblleft
supertraces\textquotedblright\ instead of traces. We just have to be careful
that the fibres of our bundles are not necessarily free\footnote{As we
mentioned already in Section 4, the fibres should be free if $\lambda$ does
not define a torsion class in the cohomology of each connected component of
$X;$ see below.}. Concretely, we define a rank map%
\[
Rk=Ch_{(0)}:K_{\lambda}(X)\rightarrow H^{0}(X;\mathbb{Z})
\]
as follows: if $E$ is a finitely generated projective module over
\underline{$\mathcal{A}$}$_{\lambda},$ it is defined by a family of two
projection operators $(p_{0},p_{1})$ in the algebra \underline{$\mathcal{A}$%
}$_{\lambda}$. Then the trace of $p_{0}-p_{1}$ is a locally constant integer,
defining the rank function, since
\[
K(\mathcal{A})\cong K(\mathcal{K})\cong K(\mathbb{C})=\mathbb{Z}\mathrm{.}%
\]
If we look at $E$ as a twisted $\mathcal{A}$-bundle over $X$ with fibre $P$
(which is a finitely generated projective $\mathcal{A}$-module), we may
consider $\mathrm{End}(P)$ as included in $M_{n}(\mathcal{A})\cong\mathcal{A}$
and restrict the supertrace defined on $\mathcal{A}$ to $\mathrm{End}(P).$ For
instance, the suspertrace of the identity on $P$ is just the rank of $P.$ By
abuse of notations, we shall identify $\mathrm{End}(P)$ and its image in
$\mathcal{A}$.

\bigskip

We now define a connection on $E$ as a family of differential forms
$\Gamma_{i}=$ $(\Gamma_{i}^{0},\Gamma_{i}^{1})$ with values in $\mathrm{End}%
(P)\subset\mathcal{A\subset L\times L},$ such that $\Gamma_{i}^{0}-\Gamma
_{i}^{1}$ is a differential form with values in $L^{1},$ satisfying the same
compatibility condition as above
\[
\Gamma_{i}=g_{ji}^{-1}\cdot\Gamma_{j}\cdot g_{ji}+g_{ji}^{-1}\cdot
dg_{ji}+\omega_{ji}.1.
\]
We choose the transition functions $g_{ji}=(g_{ji}^{0},g_{ji}^{1})$ to be in
$\mathrm{Aut}(P)$ rather than\textrm{\ GL}$_{n}(\mathbb{C})$. Such connections
exist, for instance the barycentric connection considered in the finite
dimensional case%
\[
\Gamma_{i}=%
{\displaystyle\sum}
\alpha_{k}\cdot g_{ki}^{-1}\cdot dg_{ki},
\]
where $(\alpha_{k})$ is a partition of unity associated to the covering. The
only difference with the finite dimensional case is that $n$ is replaced by
$Rk(E)=Ch_{(0)}(E)$ and the usual trace by the supertrace\footnote{Note again
that the supertrace of "1" is the rank of $P$ which is positive or negative.}.
If we denote by $str$ this supertrace, we define:%

\[
Ch(E,\Gamma)=Ch_{(0)}(E)+%
{\displaystyle\sum\limits_{m=1}^{\dim(X)/2}}
\frac{1}{m!}str(%
{\displaystyle\sum\limits_{i}}
\alpha_{i}(R_{i})^{m}),
\]
where the $R_{i}$ are the local curvatures as functions of the $\Gamma_{i}$
defined above, and where $(\alpha_{i})$ is a partition of unity associated to
the given covering $\mathcal{U}$.

The computations made before in the finite dimensional case show as well that
$Ch(E,\Gamma)$ is a cocycle for the differential $D=d-.\theta.$ As in the
finite dimensional case, standard homotopy arguments also show that the
cohomology class of $Ch(E,\Gamma)$ is independent from the connection $\Gamma$
and from the partition of unity $(\alpha_{i})$ (see Appendix 8.3 for the details).

Therefore, for any $\lambda,$ the Chern character induces an isomorphism
between $K_{\lambda}(X)\otimes_{Z}\mathbb{R}$ and the twisted cohomology which
is the cohomology of the even part of the even/odd de Rham
complex\footnote{Note that $\Omega^{2k}(X)$ and $\Omega^{2k+1}(X)$ are the
real vector spaces of differential forms of degree $2k$ or $2k+1$ with values
in $(i)^{k}\mathbb{R}.$ The differential is the usual one $d$ on $\Omega
^{2k}(X)$ and $id$ on $\Omega^{2k+1}(X)$.} with the twisted differential
$D=d-.\theta.$ It is proved in \cite{Atiyah and Segal 2}, in a computation
involving again the exponential of even forms, that this twisted cohomology
depends only on the class of $\theta$ in the cohomology group $H^{3}%
(X;i\mathbb{R}).$

\bigskip Summarizing the previous discussion, we get the following theorem:

\begin{theorem}
Let $\mathcal{U}$ be a good covering of $X,$ $\lambda$ be a completely
normalized $2$-cocycle with values in $S^{1}$ associated to this covering. Let
$(\alpha_{i})$ be a partition of unity associated to this covering and let
$\theta$ be the $3$-differential form associated to $-\lambda,$ according to
Appendix 8.1. Then the Chern character%
\[
Ch:K_{\lambda}(X)\rightarrow H_{\theta}^{ev}(X;\mathbb{R})
\]
from twisted $K$-theory to even twisted cohomology induces an isomorphism%
\[
K_{\lambda}(X)\otimes_{Z}\mathbb{R}\cong H_{\theta}^{ev}(X;\mathbb{R}).
\]

\end{theorem}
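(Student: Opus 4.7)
The plan is to reduce the claim to the case of a contractible base via Mayer--Vietoris and Bott periodicity, after first verifying that $Ch$ descends to a well-defined natural transformation
\[
K_{\lambda}(X)\otimes_{\mathbb{Z}}\mathbb{R} \longrightarrow H_{\theta}^{ev}(X;\mathbb{R}).
\]
The two key inputs will be that both sides, in $X$, define twisted cohomology theories in the usual sense, and that on a contractible piece the twist can be trivialised so that we are left with the classical Chern character, whose rationalisation is an isomorphism by Atiyah--Hirzebruch.

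First I would confirm well-definedness, i.e.\ that the cohomology class $[Ch(E,\Gamma)]$ in $H_{\theta}^{ev}(X;\mathbb{R})$ depends neither on the connection $\Gamma$ nor on the partition of unity $(\alpha_i)$, and is additive on direct sums. Independence from $\Gamma$ follows from the standard transgression argument: given two connections $\Gamma_0,\Gamma_1$, the path $\Gamma_t = (1-t)\Gamma_0 + t\Gamma_1$ produces a $1$-parameter family of Chern forms whose $t$-derivative is $D$-exact with a primitive built from $\dot\Gamma_t$ and powers of the curvature, so the difference $Ch(E,\Gamma_0)-Ch(E,\Gamma_1)$ is $D$-exact with $D = d-.\theta$. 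Independence from $(\alpha_i)$ is analogous, and additivity on direct sums is immediate from the block-diagonal structure of connections and curvatures. This material is relegated to Appendix~8.3.

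Next I would observe that both sides of the desired isomorphism are twisted cohomology theories on $X$. On the left, homotopy invariance and $2$-periodicity were arranged in Section~4, and Mayer--Vietoris follows from the corresponding property for $K$-theory of Banach algebras applied to $\underline{\mathcal{K}}_{\lambda}$. On the right, the short exact sequence
\[
0 \to \Omega^{*}(U\cup V) \to \Omega^{*}(U)\oplus \Omega^{*}(V) \to \Omega^{*}(U\cap V)\to 0
\]
remains exact after replacing $d$ by $D=d-.\theta$ (since $.\theta$ is a zero-order operator commuting with restriction), yielding Mayer--Vietoris for $H_{\theta}^{ev}$; homotopy invariance follows from the usual fibre-integration argument, the chain homotopy commuting with $.\theta$. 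Naturality of $Ch$ under restriction is evident from its local formula, and compatibility with the Mayer--Vietoris connecting map reduces to a \v{C}ech--de Rham bicomplex computation. Therefore, by induction on the size of a good covering and Mayer--Vietoris patching, it suffices to establish the isomorphism on a contractible open set.

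On such a contractible $U$ the class of $\lambda$ vanishes, so $K_{\lambda}(U) \cong K(U) = \mathbb{Z}$, and $\theta$ becomes exact so that $H_{\theta}^{ev}(U;\mathbb{R}) \cong H^{ev}(U;\mathbb{R}) = \mathbb{R}$; the Chern character sends the trivial rank-one twisted bundle to $1$, confirming the isomorphism in the base case. The main obstacle will be checking that $Ch$ is genuinely compatible with the Mayer--Vietoris connecting homomorphisms on both sides---i.e., that the $D$-cocycle associated with a $K_\lambda$-class glued from local data indeed represents the image of those local $Ch$-classes under the Mayer--Vietoris boundary. This is the content of a transgression computation applied simultaneously on $U$ and $V$, essentially a refinement of the calculation used for independence from $\Gamma$. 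Once this compatibility is in place, the inductive Mayer--Vietoris argument, combined with Bott $2$-periodicity on both sides, yields the isomorphism in full generality.
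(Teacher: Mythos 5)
Your proof is correct and follows essentially the same route as the paper: both rely on Mayer--Vietoris plus Bott periodicity to reduce the claim to contractible pieces, where it becomes the classical Atiyah--Hirzebruch rational isomorphism, with well-definedness of $Ch$ (independence of connection and partition of unity) deferred to Appendix~8.3. You have merely spelled out the intermediate steps---that both sides are twisted cohomology theories, that the de Rham Mayer--Vietoris sequence survives replacing $d$ by $D=d-.\theta$, and that the twist trivialises on contractible pieces---which the paper compresses into the phrase ``standard Mayer--Vietoris arguments and Bott periodicity.''
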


\begin{remark}
The functoriality of the Chern character is discussed in Appendix 8.3. Its
multiplicative properties will be studied in the next theorem..
\end{remark}

\begin{remark}
One may also normalize the Chern character by putting a factor $(1/2\pi
i)^{r}$ in front of $Ch_{(r)}(E,\Gamma)$ and replace $\theta$ by $\theta/2\pi
i.$ Then we have to work with the usual de Rham complex, contrarily to our
convention in the Note $23.$
\end{remark}

If the space $X$ is formal in the sense of rational homotopy theory
\cite{Felix}, we may replace the de Rham complex by its cohomology viewed as a
graded vector space (with the differential reduced to $0$). In that case, the
(even) twisted cohomology is isomorphic to the even part of the cohomology of
the complex
\[
\left[  \oplus H^{2k}(X;(i)^{k}\mathbb{R})\right]  \oplus\left[  \oplus
H^{2k+1}(X;(i)^{k}\mathbb{R})\right]  ,
\]
with the differential given by the cup-product with the cohomology class of
$\theta$ in $H^{3}(X;i\mathbb{R}).$ By a well known and deep theorem of
Deligne, Griffith, Morgan and Sullivan \cite{Deligne}, this computation is
valid when $X$ is a simply connected compact K\"{a}hler manifold.

In the particular case when $\theta$ is not $0$ in all the cohomology groups
$H^{3}(X_{r};i\mathbb{R}),$ where the $X_{r}$ are the connected composents of
$X,$ we see by a direct computation that $Ch_{(0)}(E,\Gamma)$ is necessarily
$0,$ which implies that the fibres of $E$ should be free $\mathcal{A}%
$-modules. This also implies that $Ch_{(1)}(E,\Gamma)$ is a closed
differential form. Therefore, for any $\lambda,$ one can define the first
Chern character\footnote{which is also a Chern class.} $Ch_{(1)}(E,\Gamma)$ in
the (non twisted) cohomology group $H^{2}(X;i\mathbb{R}).$ However, we need
the twisted differential cycles for the total Chern character of $E.$

Let now $\mathcal{U}=(U_{i})$ and $\mathcal{V}$ $=(V_{j})$ be a covering of
$X$ and $Y$ respectively. Let $(\alpha_{i})$ (resp. $(\beta_{j}))$ be a
partition of unity associated to $\mathcal{U}$ (resp $\mathcal{V}$). The
products $(\alpha_{i}\cdot\beta_{j})$ define a partition of unity associated
to the covering $\mathcal{W}$ $=$ ($U_{i}\times V_{j})$ of $X\times Y.$

\begin{theorem}
Let $E$ be a $\lambda$-twisted $\mathcal{A}$-bundle on $X$ and let $F$ be a
$\mu$-twisted $\mathcal{A}$-bundle on $Y.$ Here $\lambda$ and $\mu$ are
explicit \v{C}ech cocycles $\lambda_{tsr}$ and $\mu_{wvu}$ with values in
$S^{1},$ associated to the coverings $\mathcal{U}$ and $\mathcal{V}$
respectively. Let $\overline{\lambda}$ and $\overline{\mu}$ be the closed
differential forms defined on each $U_{i}\times V_{j}$ by the formulas
\begin{align*}
\overline{\lambda} &  =%
{\displaystyle\sum\limits_{t,s}}
d\alpha_{t}\cdot d\alpha_{s}\cdot\lambda_{tsi}^{-1}\cdot d\lambda_{tsi}\\
\overline{\mu} &  =%
{\displaystyle\sum\limits_{w,v}}
d\beta_{w}\cdot d\beta_{v}\cdot\mu_{wvj}^{-1}\cdot d\mu_{wvj},
\end{align*}
as in Appendix 8.1. Then we have the commutative diagram\footnote{According to
the computations in Section 5, we identify the $K$-theory of $\mathcal{A}%
_{\lambda}\widehat{\otimes}\mathcal{A}_{\mu}$ with the $K$-theory of
$\mathcal{A}_{\lambda\mu}.$ However, in these computations, one has to replace
$\mathcal{K}$ by the ideal $L^{1}$ of trace class operators.}
\end{theorem}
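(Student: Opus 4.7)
The plan is to establish the multiplicativity of the Chern character by constructing an explicit tensor-product connection on $E\widehat{\otimes}F$ and performing a Chern--Weil computation analogous to the classical one. The commutative diagram we wish to verify has the external cup-product $K_{\lambda}(X)\times K_{\mu}(Y)\to K_{\lambda\mu}(X\times Y)$ from Section 5 on top and the cup-product $H^{ev}_{\overline{\lambda}}(X)\times H^{ev}_{\overline{\mu}}(Y)\to H^{ev}_{\overline{\lambda}+\overline{\mu}}(X\times Y)$ on the bottom, with Chern characters on the sides.

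First, I would choose connections $\Gamma^{E}=(\Gamma^E_i)$ on $E$ and $\Gamma^{F}=(\Gamma^F_j)$ on $F$ (the barycentric connections are a convenient choice). On the patch $U_i\times V_j$ of the product covering $\mathcal{W}$, define
\[
\Gamma_{(i,j)} \;=\; \Gamma^E_i\,\widehat{\otimes}\,1 \;+\; 1\,\widehat{\otimes}\,\Gamma^F_j.
\]
The verification that this defines a connection on the $\lambda\mu$-twisted bundle $E\widehat{\otimes}F$, with scalar $1$-form $\omega^{E}_{ti}+\omega^{F}_{wj}$ satisfying the required coboundary relation with respect to $\lambda_{tsr}\cdot\mu_{wvu}$, is a direct calculation using the Leibniz rule applied to $g^E_{ti}\widehat{\otimes}g^F_{wj}$; one uses that $g^E$-factors and $g^F$-factors live on disjoint factors of $X\times Y$ so the transition cocycles multiply cleanly and the $\omega$'s simply add.

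Next, I would compute the local curvature. Because $\Gamma^E_i\,\widehat{\otimes}\,1$ and $1\,\widehat{\otimes}\,\Gamma^F_j$ graded-commute (they are supported on distinct factors of the base, and we are using the graded tensor product), the cross terms in $(\Gamma_{(i,j)})^2$ are handled by signs that cancel when the supertrace is applied, yielding
\[
R_{(i,j)} \;=\; R^E_i\,\widehat{\otimes}\,1 \;+\; 1\,\widehat{\otimes}\,R^F_j.
\]
Since the two summands again graded-commute, the binomial formula gives $R_{(i,j)}^m=\sum_{k=0}^m\binom{m}{k}(R^E_i)^k\widehat{\otimes}(R^F_j)^{m-k}$. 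Using the multiplicativity $\mathrm{str}(A\widehat{\otimes}B)=\mathrm{str}(A)\cdot\mathrm{str}(B)$ of the supertrace on the graded tensor product, and the product partition of unity $(\alpha_i\beta_j)$ on $X\times Y$, one obtains
\[
\frac{1}{m!}\,\mathrm{str}\!\left(\sum_{i,j}\alpha_i\beta_j\,R_{(i,j)}^m\right)
\;=\;\sum_{k+\ell=m}\frac{1}{k!\,\ell!}\,Ch_{(k)}(E,\Gamma^E)\cdot Ch_{(\ell)}(F,\Gamma^F),
\]
which, upon summing in $m$, is exactly $Ch(E,\Gamma^E)\wedge Ch(F,\Gamma^F)$.

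Finally, it must be checked that the total form $Ch(E,\Gamma^E)\wedge Ch(F,\Gamma^F)$ is indeed a cycle for the twisted differential $D=d-\cdot(\overline{\lambda}+\overline{\mu})$, which follows from the Leibniz rule together with the two identities $d\,Ch(E,\Gamma^E)=Ch(E,\Gamma^E)\cdot\overline{\lambda}$ and $d\,Ch(F,\Gamma^F)=Ch(F,\Gamma^F)\cdot\overline{\mu}$ established in the construction of the Chern character earlier in this section, and that the cohomology class of the twist on $E\widehat{\otimes}F$ is $\overline{\lambda}+\overline{\mu}$ (proved via the expressions from Appendix 8.1 for $\overline{\lambda}$ and $\overline{\mu}$ as local $3$-forms built from the coboundaries $\lambda^{-1}d\lambda$ and $\mu^{-1}d\mu$). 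The main obstacle I anticipate is a bookkeeping one: controlling the graded signs in the supertrace together with the scalar correction forms $\omega^E+\omega^F$, and verifying that the \v{C}ech cocycle $\lambda\mu$ corresponds in de Rham cohomology to $\overline{\lambda}+\overline{\mu}$; both follow from the constructions of Appendix 8.1 but must be done carefully enough that the cup-product of forms on the right-hand side lands in the correct twisted complex.
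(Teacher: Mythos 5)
Your proposal matches the paper's proof in all essentials: both construct the tensor-product connection $\Delta=\Gamma\otimes 1+1\otimes\nabla$ on $E\widehat{\otimes}F$, observe that its curvature splits as $R_E\otimes 1 + 1\otimes R_F$, and then apply the binomial identity with the product partition of unity $(\alpha_i\beta_j)$ to get $\tfrac{1}{m!}Ch_{(m)}(E\otimes F,\Delta)=\sum_{p+q=m}\tfrac{1}{p!q!}Ch_{(p)}(E,\Gamma)\,Ch_{(q)}(F,\nabla)$. The additional checks you flag (graded signs under the supertrace, the cocycle condition for $\omega^E+\omega^F$, and the identification of $\lambda\mu$ with $\overline{\lambda}+\overline{\mu}$ in de Rham) are left implicit in the paper's terse proof but are consistent with what is established in Section 7 and Appendix 8.1.
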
%

\[%
\begin{array}
[c]{ccc}%
K_{\lambda}(X)\times K_{\mu}(Y) & \rightarrow & K_{\lambda\mu}(X\times Y)\\
\downarrow &  & \downarrow\\
H_{\overline{\lambda}}^{ev}(X)\times H_{\overline{\mu}}^{ev}(Y) &
\longrightarrow & H_{\overline{\lambda}+\overline{\mu}}^{ev}(X\times Y)
\end{array}
\]

\begin{proof}
Let $\Gamma=(\Gamma_{i})$ (resp. $\nabla=(\nabla_{j}))$ be a connection on $E
$ (resp. $F).$ Then $\Delta=\Gamma\otimes1+1\otimes\nabla$ is a connection on
$E\otimes F.$ Therefore, if $R_{E}$ (resp. $R_{F})$ is the curvature
associated to $\Gamma$ (resp. $\nabla$),%
\[
R_{E\otimes F}=R_{E}\otimes1+1\otimes R_{F}%
\]
is the curvature associated to $\Delta$ over each open subset $U_{i}\times
V_{j}$ of $X\times Y.$Using the partition of unity $(\alpha_{i}\cdot\beta
_{j})$ associated to the covering $(U_{i}\times V_{j})$ and the binomial
identity, we find the relation
\[
\frac{1}{m!}Ch_{(m)}(E\otimes F,\Delta)=%
{\displaystyle\sum\limits_{p+q=m}}
\frac{1}{p!q!}Ch_{(p)}(E,\Gamma)Ch_{(q)}(F,\nabla),
\]
from which the theorem follows.
\end{proof}

Finally, we should add a few words concerning graded twisted $K$-theory which
is indexed essentially by elements%
\[
\left[  \widetilde{\lambda}\right]  \in H^{1}(X;\mathbb{Z}/2)\times
H^{2}(X;S^{1}).
\]
If we apply Theorem 4.4 of \cite{Karoubi}, this group (at least rationally) is
isomorphic to the ungraded twisted $K$-theory of $Y$, where $Y$ is the Thom
space of the orientation real line bundle $L.$ This $L$ corresponds to the
image of $\left[  \widetilde{\lambda}\right]  $ in $H^{1}(X;\mathbb{Z}/2)$. In
more precise terms, the graded twisted $K$-group tensored with the field of
real numbers is isomorphic to the odd twisted relative cohomology group of the
pair $(P,X).$ Here $P=$ $P(L\oplus1)$ denotes the real projective bundle of
$L\oplus1$ (with fibre $P^{1}\cong S^{1}),$ and the 3-dimensional cohomology
twist is induced by the projection $P\rightarrow X$ from the twist in the
cohomology of $X$. This (graded) twisted cohomology is different in general
from the twisted cohomology associated to the image of $\left[  \widetilde
{\lambda}\right]  $ in $H^{3}(X;i\mathbb{R}).$This is not surprising since the
usual real cohomology of a manifold with a coefficient system in
$H^{1}(X;\mathbb{Z}/2)$ also depends on this system.

\begin{remark}
If $A$ is not a commutative Banach algebra, there is no internal product%
\[
K_{n}(A)\times K_{p}(A)\rightarrow K_{n+p}(A)
\]
in general. Therefore, it is remarkable that such a product exists for twisted
$K$-groups which are $K_{\ast}(\underline{\mathcal{K}}_{\lambda}),$ where
$\underline{\mathcal{K}}_{\lambda}$ is a noncommutative Banach algebra..
\end{remark}

\section{Appendix}

\subsection{Relation between \v{C}ech cohomology with coefficients in $S^{1}$
and de Rham cohomology}

This section does not claim any originality. It may be easily deduced from the
classical books \cite{Bott-Tu}, \cite{Karoubi-Leruste} for instance, the basic
ideas going back to Andr\'{e} Weil. It is added for completeness' sake and a
normalization purpose.

Our first task is to make more explicit the cohomology isomorphism
\[
H^{r}(\mathcal{U})\mathbb{\cong}H_{dR}^{r}(X),
\]
where $\mathcal{U}$ is a good covering of $X$. The \v{C}ech and de Rham
cohomologies are here taken with coefficients in a real vector space of finite
dimension $V.$

Let us denote by $\Omega^{r}(X)$ the vector space of differential forms on $X$
with values in $V$ and let $(\alpha_{i})$ be a partition of unity associated
to the covering $\mathcal{U}$. We define a morphism\footnote{With $V$ provided
with the discrete topology.}%
\[
f_{r}:C^{r}(\mathcal{U};V)\rightarrow\Omega^{r}(X)
\]
in the following way. For $r=0,$ we send a cochain $(c_{i})$ to the
C$^{\infty}$ function%
\[
x\longmapsto%
{\displaystyle\sum}
\alpha_{i}(x)\cdot c_{i},
\]
which we simply write $%
{\displaystyle\sum\limits_{i}}
\alpha_{i}\cdot c_{i}.$ For general $r>0,$ we send the $r$-cochain
$(c_{i_{0}i1...ir})$ to the sum%
\[%
{\displaystyle\sum_{(i0,...,ir)}}
\alpha_{i0\cdot}d\alpha_{i1}\cdot...d\alpha_{ir}\cdot c_{i0i1...ir}.
\]
We have to check that this correspondence is compatible with the coboundaries,
i.e. that%
\[
f_{r+1}(\partial c)=d(f_{r}(c)).
\]
The cochain $\partial c,$ which we call $v,$ is defined by the usual formula%
\[
v_{i0i1...i(r-1)}=%
{\displaystyle\sum_{m=0}^{r+1}}
(-1)^{m}c_{i0...\widehat{im}...i(r-1)}.
\]
Therefore,%
\[
f_{r+1}(v)=%
{\displaystyle\sum_{(i0,...,i(r-1))}}
\alpha_{i0\cdot}d\alpha_{i1}\cdot...d\alpha_{i(r-1)}\cdot%
{\displaystyle\sum_{m=0}^{r+1}}
(-1)^{m}c_{i0...\widehat{im}...i(r-1)}.
\]
In the previous sum, the terms corresponding to an index $m>0$ are reduced to
$0$ since the sum of the corresponding $d\alpha$ is $0.$ The previous identity
may then be written%
\[
f_{r+1}(v)=%
{\displaystyle\sum_{(i0,...,i(r-1))}}
\alpha_{i0\cdot}d\alpha_{i1}\cdot...d\alpha_{i(r+1)}\cdot c_{i1...i(r+1)}%
\]%
\[
=%
{\displaystyle\sum_{(i1,...,i(r+1)}}
d\alpha_{i1}\cdot...d\alpha_{i(r+1)}\cdot c_{i1...i(r+1)},
\]
which is $d(f_{r}(c)),$ if we reindex the components of this sum: notice that
the $c^{\prime}s$ are constant functions.

The maps $(f_{r})$ define a morphism of complexes which is a quasi-isomorphism
over any intersection of the $U_{i}$ since the covering $\mathcal{U}$ is good.
Therefore, by a classical Mayer-Vietoris argument, they induce an isomorphism
between the \v{C}ech and de Rham cohomologies.

We take a step further and now compare the \v{C}ech cohomology $H^{r-1}%
(X:S^{1})$ with $H_{dR}^{r}(X)$ via a map%
\[
H^{r-1}(\mathcal{U};S^{1})\rightarrow H^{r}(\mathcal{U};V)\cong H_{dR}^{r}(X).
\]
This is the coboundary map associated to the exact sequence%
\[
0\rightarrow2i\pi\mathbb{Z\rightarrow}i\mathbb{R}\overset{e}{\rightarrow}%
S^{1}\rightarrow0,
\]
where $e$ is the exponential function and $V$ the real vector space
$i\mathbb{R}.$ If $\lambda_{i0i1..i(r+1)}\in Z^{r-1}(\mathcal{U};S^{1}),$
there is a cochain $u=$ $u_{i0i1..i(r+1)}$ such that $e(u)=\lambda$. The
classical definition of the coboundary map%
\[
H^{r-1}(\mathcal{U};S^{1})\rightarrow H^{r}(\mathcal{U};2i\pi\mathbb{Z})
\]
is as follows. We first consider the coboundary of $u$ in $C^{r}(\mathcal{U})
$, which we look as a cocycle with values in $2i\pi\mathbb{Z}$, defined by%
\[
c_{i0i1...ir}=%
{\displaystyle\sum_{m=0}^{r}}
(1)^{m}u_{i0...\widehat{im}...ir}.
\]
According to the previous considerations, the associated de Rham class with
values in $i\mathbb{R}=V$ is defined by%
\[
\omega=%
{\displaystyle\sum_{(i0,...,ir)}}
\alpha_{i0\cdot}d\alpha_{i1}\cdot...d\alpha_{ir}\cdot c_{i0...ir}%
\]%
\[
=%
{\displaystyle\sum_{(i0,...,ir)}}
\alpha_{i0\cdot}d\alpha_{i1}\cdot...d\alpha_{ir\cdot}%
{\displaystyle\sum_{m=0}^{r}}
(1)^{m}u_{i0...\widehat{im}...ir}.
\]
Using the same argument as above, this sum may be written
\[
\omega=%
{\displaystyle\sum_{(i1,...,ir)}}
d\alpha_{i1}\cdot...d\alpha_{ir}\cdot u_{i1...ir}.
\]
We notice that $\omega$ is a closed form since $c_{i0i1...ir}\in
2i\pi\mathbb{Z}.$ On the other hand, it is cohomologous up to the sign
$(-1)^{r}$ to the form
\[
\theta=%
{\displaystyle\sum_{(i1,...,ir)}}
\alpha_{i1}\cdot d\alpha_{i2}...d\alpha_{ir}\cdot du_{i1i2...ir}.
\]
Using again the fact that $c_{i0i1...ir}\in2i\pi\mathbb{Z},$ we see that
$\theta$ is equal on $U_{i0}$ to the following differential form%
\[%
{\displaystyle\sum_{(i1,...,ir)}}
\alpha_{i1}d\alpha_{i2}...d\alpha_{ir}\cdot du_{i1i2...ir}=%
{\displaystyle\sum_{(i2,...,ir)}}
d\alpha_{i2}...d\alpha_{ir}\cdot du_{i0i2...ir}.
\]
We observe that $du_{i0i2...ir}$ is the logarithmic differential of
$\lambda_{i0i2...ir}.$ Therefore, if we change the indices and $r$ to $r+1$,
we get the following theorem.

\begin{theorem}
Let $\lambda_{i0...ir}$ be an $r$-cocycle on a good covering $\mathcal{U}$
with values in $S^{1}$ and let $(\alpha_{i})$ be a partition of unity
associated to $\mathcal{U}$. Then the closed de Rham form $\omega$ of degree
$r+1$ with values in $V=i\mathbb{R}$ which is associated to $\lambda$ by the
coboundary map$\footnote{Notice that $\mathbb{R}$ is provided with the
discrete topology.}$%
\[
H^{r}(\mathcal{U};S^{1})\rightarrow H^{r+1}(\mathcal{U};2i\pi\mathbb{Z}%
)\rightarrow H^{r+1}(\mathcal{U};i\mathbb{R})
\]
is given by the following formula on each open set $U_i{}_0:$%
\[
\omega=(-1)^{r+1}%
{\displaystyle\sum\limits_{(i_{1},...,i_{r})}}
d\alpha_{i1...}d\alpha_{ir}\cdot(\lambda_{i0...ir})^{-1}\cdot d\lambda
_{i0...ir}.
\]

\end{theorem}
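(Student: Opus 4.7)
The plan is to unpack the definition of the coboundary map via the exponential short exact sequence and then push the resulting $2i\pi\mathbb{Z}$-valued \v{C}ech cocycle through the explicit \v{C}ech--de Rham comparison map $f_{r+1}$ constructed just before the theorem. Concretely, because $\mathcal{U}$ is good, the function $\log$ is locally defined on $S^1$-valued cochains, so one can choose a cochain $u_{i_0\dots i_r}$ with $e(u)=\lambda$; its coboundary $c=\partial u$ takes values in $2i\pi\mathbb{Z}$ and represents the image of $[\lambda]$ in $H^{r+1}(\mathcal{U};2i\pi\mathbb{Z})$.

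Next I would feed $c$ into the formula
\[
f_{r+1}(c)=\sum_{(i_0,\dots,i_{r+1})}\alpha_{i_0}\,d\alpha_{i_1}\cdots d\alpha_{i_{r+1}}\cdot c_{i_0\dots i_{r+1}},
\]
which is a closed de Rham form with values in $i\mathbb{R}$ by the lemma proved earlier. Expanding $c_{i_0\dots i_{r+1}}=\sum_m(-1)^m u_{i_0\dots\widehat{i_m}\dots i_{r+1}}$ and using that $\sum_j d\alpha_j=0$ (since the $\alpha_j$ partition unity), every term in which the omitted index is not the one carrying $\alpha_{i_0}$ collapses to zero; this is the same cancellation that appears on p.\,preceding the theorem statement.

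The heart of the argument is then the step that turns the integer-valued cocycle $c$ back into the logarithmic differential $\lambda^{-1}d\lambda$. One rewrites the surviving sum as a boundary of the $r$-cochain $\sum \alpha_{i_1}d\alpha_{i_2}\cdots d\alpha_{i_r}\cdot u_{i_1\dots i_r}$ and exploits $c_{i_0\dots i_{r+1}}\in 2i\pi\mathbb{Z}$ (so that $dc=0$) to shift all $d$'s from the $\alpha$'s onto a single $u$-factor; this produces, up to an overall sign $(-1)^{r+1}$ from reordering, a representative of the form
\[
\sum_{(i_1,\dots,i_r)} d\alpha_{i_1}\cdots d\alpha_{i_r}\cdot du_{i_0 i_1\dots i_r}
\]
on $U_{i_0}$. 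Finally one uses that locally on $U_{i_0}\cap\cdots\cap U_{i_r}$ one has $du_{i_0\dots i_r}=\lambda_{i_0\dots i_r}^{-1}d\lambda_{i_0\dots i_r}$, which is globally defined even though $u$ is only a local branch of the logarithm, to obtain the stated formula.

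The main obstacle is bookkeeping: one must carefully track which index (among $i_0,\dots,i_{r+1}$) is carrying the $\alpha$ factor without a $d$, and verify that the resulting form is independent of the choice of local branch $u$ of $\log\lambda$ and, restricted to $U_{i_0}$, really gives the intrinsic closed form $\omega$ up to the sign $(-1)^{r+1}$. Once the combinatorial bookkeeping and the sign from transposing $d\alpha_{i_0}$ past the other $d\alpha$'s are done, the formula drops out.
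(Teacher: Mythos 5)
Your proposal follows exactly the paper's own route: choose a local logarithm $u$ of $\lambda$, push its integer-valued coboundary $c=\partial u$ through the explicit \v{C}ech--de Rham map $f_{r+1}$, cancel the terms with $m>0$ using $\sum_j d\alpha_j=0$, integrate by parts to move the exterior derivative onto $u$, use $dc=0$ to restrict cleanly to $U_{i_0}$, and finish with $du=\lambda^{-1}d\lambda$. The only rough edge is a small index slip (the $1$-form whose coboundary you take should read $\sum\alpha_{i_1}d\alpha_{i_2}\cdots d\alpha_{i_{r+1}}u_{i_1\dots i_{r+1}}$, with $r+1$ rather than $r$ in the last subscripts), but this is bookkeeping and the argument, including the sign $(-1)^{r+1}$ from commuting the $d$'s, is sound.
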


\bigskip\smallskip

\begin{ex}
If we choose $r=2$ as in our paper, those formulas may be simply written as%
\[
\omega=%
{\displaystyle\sum\limits_{(i,j,k)}}
d\alpha_{i}\cdot d\alpha_{j}\cdot d\alpha_{k}\cdot c_{ijk}%
\]
which is cohomologous to
\[
-\sum_{(i,j,k)}\alpha_{i}\cdot d\alpha_{j}\cdot d\alpha_{k}\cdot dc_{ijk}.
\]
On the other hand, for a fixed $l,$ if we consider the sequence $(l,i,j,k),$
and the fact that
\[
c_{ijk}-c_{ljk}+c_{lik}-c_{lij}\in2i\pi\mathbb{Z},
\]
we may replace $dc_{ijk}$ by $dc_{ljk}-dc_{lik}+dc_{lij}.$ Therefore, the
restriction of $\omega$ to $U_{l}$ may be written as%
\begin{align*}
\omega &  =-\sum_{(i,j,k)}\alpha_{i}\cdot d\alpha_{j}\cdot d\alpha_{k}\cdot
dc_{ljk}+\sum_{(i,j,k)}\alpha_{i}\cdot d\alpha_{j}\cdot d\alpha_{k}\cdot
dc_{lik}\\
&  -\sum_{(i,j,k)}\alpha_{i}\cdot d\alpha_{j}\cdot d\alpha_{k}\cdot dc_{lij}%
\end{align*}
or
\begin{align*}
-\sum_{(i,j,k)}\alpha_{i}\cdot d\alpha_{j}\cdot d\alpha_{k}\cdot dc_{ljk} &
=-\sum_{(j,k)}d\alpha_{j}\cdot d\alpha_{k}\cdot dc_{ljk}\\
&  =-\sum_{(j,k)}d\alpha_{j}\cdot d\alpha_{k}\cdot\lambda_{ijk}^{-1}%
d\lambda_{ljk},
\end{align*}
as a differential form on $U_{l}.$ If we assume the cocycle $\lambda$
completely normalized, we find the explicit formula given in Section 7.
\end{ex}

\subsection{ Some key isomorphisms between various definitions of twisted
$K$-groups}

We want to make more explicit the isomorphisms between the various definitions
of twisted $K$-theory given in Section 4. This is especially relevant to the
proof of the multiplicativity of the Chern character in Section 7.

With the notations of Section 4, the more basic one is probably the following%
\[
K(\underline{\mathcal{K}}_{\lambda})\overset{\cong}{\longrightarrow}%
K_{1}(\underline{\mathcal{B}}_{\lambda}/\underline{\mathcal{K}}_{\lambda}).
\]
We recall that the first group $K(\underline{\mathcal{K}}_{\lambda})$ is the
original definition of Rosenberg \cite{Rosenberg}. The second group may be
interpreted as the Fredholm definition of twisted $K$-theory as in
\cite{Atiyah and Segal} (or \cite{DK} if $\lambda$ defines a torsion class in
$H^{3}(X;\mathbb{Z})).$ More precisely, if $E$ is a $\lambda$-twisted Hilbert
bundle and if $\mathcal{F}(E)$ is the space of Fredholm maps in $\mathrm{END}%
(E),$ the map%
\[
\mathcal{F}(E)\rightarrow(\underline{\mathcal{B}}_{\lambda}/\underline
{\mathcal{K}}_{\lambda})^{\ast}%
\]
is a locally trivial fibration with contractible fibres, as we pointed out in
Section 4. Therefore, we have the identifications%
\[
K_{\lambda}(X)\cong K(\underline{\mathcal{K}}_{\lambda})\cong K_{1}%
(\underline{\mathcal{B}}_{\lambda}/\underline{\mathcal{K}}_{\lambda}).
\]

\begin{theorem}
Let $\tau$ be the generator of $K_{1}(\mathcal{L}/\mathcal{K})\cong%
\mathbb{Z},$ associated to the Fredholm operator given by the shift. Then the
cup-product with $\tau$ induces an isomorphism%
\[
\varphi:K(\underline{\mathcal{K}}_{\lambda})\overset{\cong}{\longrightarrow
}K_{1}(\underline{\mathcal{B}}_{\lambda}/\underline{\mathcal{K}}_{\lambda}).
\]

\end{theorem}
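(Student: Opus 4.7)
The plan is to identify $\varphi$ with the inverse of the connecting homomorphism $\partial$ in the six-term $K$-theory sequence of the extension
\[
0 \rightarrow \underline{\mathcal{K}}_{\lambda} \rightarrow \underline{\mathcal{B}}_{\lambda} \rightarrow \underline{\mathcal{B}}_{\lambda}/\underline{\mathcal{K}}_{\lambda} \rightarrow 0,
\]
and then pin this identification down over a single point. The first step is to recall that $\underline{\mathcal{B}}_{\lambda}$ is a bundle of flabby algebras (locally a copy of $\mathcal{L}(H)$, which satisfies $\mathcal{L}(H) \oplus \mathcal{L}(H) \cong \mathcal{L}(H)$ after identifying $H \oplus H \cong H$), so its section algebra has vanishing $K$-theory. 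The six-term sequence then collapses, giving an isomorphism $\partial: K_{1}(\underline{\mathcal{B}}_{\lambda}/\underline{\mathcal{K}}_{\lambda}) \overset{\cong}{\rightarrow} K_{0}(\underline{\mathcal{K}}_{\lambda})$; it therefore suffices to show $\partial \circ \varphi = \pm \mathrm{id}$.

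The heart of the argument is a projection formula for the external cup-product: for $u \in K(\underline{\mathcal{K}}_{\lambda})$ and $\tau \in K_{1}(\mathcal{L}/\mathcal{K})$, one has $\partial(u \smallsmile \tau) = u \cdot \partial \tau$. Representing $u$ by a pair of projections $(p_{0},p_{1})$ in a matrix algebra over $(\underline{\mathcal{K}}_{\lambda})^{+}$, and $\tau$ by a Fredholm operator $F$, the class $u \smallsmile \tau$ lifts, inside $(\underline{\mathcal{B}}_{\lambda}/\underline{\mathcal{K}}_{\lambda})^{+}$, to a Toeplitz-type invertible obtained by compressing $F$ to the ranges of the $p_{i}$; applying $\partial$ yields the difference of the index bundles of these compressions, which equals $u \cdot \partial \tau$ by the standard index calculus. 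This is the multiplicativity of the boundary map under external tensor products of extensions, and is the least routine ingredient of the proof.

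Since $\partial \tau$ is, by definition of $\tau$ as the shift class, the canonical generator $1 \in K_{0}(\mathcal{K}) = \mathbb{Z}$, the projection formula gives $\partial(\varphi(u)) = u$ for every $u$, hence $\varphi = \partial^{-1}$ is an isomorphism. The main obstacle is the clean formulation of the projection formula in the bundle-theoretic setting. If one prefers to avoid that verification, an alternative is to note that both $X \mapsto K(\underline{\mathcal{K}}_{\lambda})$ and $X \mapsto K_{1}(\underline{\mathcal{B}}_{\lambda}/\underline{\mathcal{K}}_{\lambda})$ are twisted cohomology theories in $X$ (as pointed out at the end of Section 4) and $\varphi$ is a natural transformation between them. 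A Mayer--Vietoris induction on a good covering then reduces the statement to the case of a contractible open set with trivial $\lambda$, where it is exactly the classical Atiyah--J\"anich fact that the shift generates $K_{1}(\mathcal{L}/\mathcal{K}) \cong K_{0}(\mathcal{K}) \cong \mathbb{Z}$.
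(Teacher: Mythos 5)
Your alternative argument at the end is exactly the paper's proof: Karoubi observes that cup-product with $\tau$ is a natural transformation between two twisted cohomology theories of $X$, then uses a standard Mayer--Vietoris argument and Bott periodicity to reduce to the case $X$ contractible, where the statement is the classical Atiyah--J\"anich theorem. Your primary argument, via the vanishing of $K_{*}(\underline{\mathcal{B}}_{\lambda})$ and the projection formula $\partial(u\smallsmile\tau)=u\cdot\partial\tau$, is a genuinely different and more structural route, and it is correct in outline; you are right to flag the projection formula as the non-routine ingredient, since verifying that index-level compatibility in the twisted bundle setting carries essentially the same weight that the Mayer--Vietoris reduction carries in the paper. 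What the projection-formula route buys you is an explicit identification of $\varphi$ with $\partial^{-1}$, and it is worth noting that the paper makes exactly this identification in the paragraph immediately following its proof, where the inverse $\psi$ is described as the connecting homomorphism in the Mayer--Vietoris square for $\underline{\mathcal{A}}_{\lambda}$ and is computed explicitly via the lifting $J=(\varepsilon,\exp(\pi\nabla_{D})\cdot\varepsilon)$; so your two routes and the paper's remarks fit together coherently. The Mayer--Vietoris route is shorter because it offloads the hard work onto the cohomology-theory structure of both sides, which Section 4 has already established.
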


\begin{proof}
In this statement, we implicitly identify the Hilbert tensor product $H\otimes
H$ with $H.$ If we forget the twisting, there is a well defined ring map%
\[
\mathcal{K\otimes L}/\mathcal{K\rightarrow L}/\mathcal{K}.
\]
For the same reasons, there is a ring map%
\[
\underline{\mathcal{K}}_{\lambda}\widehat{\mathcal{\otimes}}\mathcal{L}%
/\mathcal{K\rightarrow}\underline{\mathcal{B}}_{\lambda}/\underline
{\mathcal{K}}_{\lambda}.
\]
When the base space $X$ varies, the cup-product with the element $\tau$
induces a morphism between the (twisted) $K$-theories associated to
$\underline{\mathcal{K}}_{\lambda}$ and $\underline{\mathcal{B}}_{\lambda
}/\underline{\mathcal{K}}_{\lambda}$ respectively (with a shift for the second
one). By a standard Mayer-Vietoris argument and Bott periodicity, we reduce
the theorem to the case when $X$ is contractible, which is obvious.
\end{proof}

\bigskip

Although we don't really need it in this paper, it might be interesting to
define explicitly the isomorphism backwards:%
\[
\psi:K_{1}(\underline{\mathcal{B}}_{\lambda}/\underline{\mathcal{K}}_{\lambda
})\overset{\cong}{\longrightarrow}K(\underline{\mathcal{K}}_{\lambda})\cong
K(\mathcal{A}_{\lambda}).
\]
Such a map $\psi$ is simply the connecting homomorphism in the Mayer-Vietoris
exact sequence in $K$-theory associated to the cartesian square%

\[%
\begin{array}
[c]{ccc}%
\underline{\mathcal{A}}_{\lambda} & \rightarrow & \underline{\mathcal{B}%
}_{\lambda}\\
\downarrow &  & \downarrow\\
\underline{\mathcal{B}}_{\lambda} & \rightarrow & \underline{\mathcal{B}%
}_{\lambda}/\underline{\mathcal{K}}_{\lambda}%
\end{array}
.
\]
In more detail: if $\alpha$ is an invertible element in the ring
$\underline{\mathcal{B}}_{\lambda}/\underline{\mathcal{K}}_{\lambda},$ we
consider the $2\times2$ matrix%
\[
\left(
\begin{array}
[c]{cc}%
\alpha & 0\\
0 & \alpha^{-1}%
\end{array}
\right)  .
\]
By the Whitehead lemma (or analytic considerations: see below), this matrix
may be lifted as an invertible $2\times2$ matrix with coefficients in
$\underline{\mathcal{B}}_{\lambda},$ say $\gamma.$ Let $\varepsilon$ be the
matrix defining the obvious grading%
\[
\varepsilon=\left(
\begin{array}
[c]{cc}%
1 & 0\\
0 & -1
\end{array}
\right)  .
\]
Then the couple $(\varepsilon,\gamma\cdot\varepsilon\cdot\gamma^{-1})$ defines
an involution $J$ on $M_{2}(\underline{\mathcal{A}}_{\lambda})\cong%
\underline{\mathcal{A}}_{\lambda},$ hence a finitely generated projective
module over $\underline{\mathcal{A}}_{\lambda}$ which is simply the image of
$(J+1)/2.$ It is easy to show that the class in $K(\underline{\mathcal{A}%
}_{\lambda})$ is independent from the choice of the lifting $\gamma:$ this is
the classical definition of the connecting homomorphism $\psi$ (see e.g.
\cite{Milnor}).

Instead of working with invertible elements $\alpha,$ we may as well consider
families of Fredholm maps $D$ mapping to $\alpha,$ which are already in
\underline{$\mathcal{L}$}$_{\lambda}.$ Without loss of generality, we may also
assume $\alpha$ unitary which implies that a lifting of $\alpha^{-1}$ may be
choosen to be the adjoint $D^{\ast}.$ We now write the identity%
\[
\left(
\begin{array}
[c]{cc}%
D & 0\\
0 & D^{\ast}%
\end{array}
\right)  =\left(
\begin{array}
[c]{cc}%
0 & D\\
-D^{\ast} & 0
\end{array}
\right)  \cdot\left(
\begin{array}
[c]{cc}%
0 & -1\\
1 & 0
\end{array}
\right)  .
\]
If we define $\nabla_{D}$ as
\[
\nabla_{D}=\left(
\begin{array}
[c]{cc}%
0 & D\\
-D^{\ast} & 0
\end{array}
\right)
\]
in general, we see that we may choose the element $\gamma$ above to be
$\exp(\pi\nabla_{D}/2)\cdot\nabla_{-1}.$ Therefore,%
\begin{align*}
\gamma\cdot\varepsilon\cdot\gamma^{-1} &  =\exp(\pi\nabla_{D}/2)\cdot
\nabla_{-1}.\varepsilon\cdot\nabla_{1}\cdot\exp(-\pi\nabla_{D}/2)\\
&  =-\exp(\pi\nabla_{D}/2).\varepsilon\cdot\exp(-\pi\nabla_{D}/2).
\end{align*}
On the other hand, it is clear that $\nabla_{D}$ and $\varepsilon$
anticommute. Therefore, the previous formula may be written as%
\[
\gamma\cdot\varepsilon\cdot\gamma^{-1}=\exp(\pi\nabla_{D}).\varepsilon.
\]
The couple
\[
J=(\varepsilon,\exp(\pi\nabla_{D}).\varepsilon)
\]
defines the required element of $K(\underline{\mathcal{A}}_{\lambda}).$ By
construction, we see that $J$ also defines an element of the relative group
associated to the augmentation map%
\[
(\underline{\mathcal{K}}_{\lambda})^{+}\rightarrow\mathbb{C}\mathrm{.}%
\]
Here $(\underline{\mathcal{K}}_{\lambda})^{+}$ is the ring $\underline
{\mathcal{K}}_{\lambda}$ with a unit added and the relative $K$-group is the
usual one:%
\[
K(\underline{\mathcal{K}}_{\lambda})=Ker(K((\underline{\mathcal{K}}_{\lambda
})^{+})\rightarrow K(\mathbb{C})=\mathbb{Z}\mathrm{)}%
\]
which is canonically isomorphic to $K(\mathcal{A}_{\lambda}).$

\subsection{Some functorial properties of twisted $K$-theory and of the Chern
character}

In this paper, we have indexed twisted $K$-theory by completely normalized $2
$-cocycles $\lambda$ with values in $S^{1}.$ Of course, such a cocycle
determines a cohomology class $\left[  \lambda\right]  $ in $H^{2}%
(X;S^{1})\cong H^{3}(X;2i\pi\mathbb{Z})$ as we have seen in $8.1$ and we would
like to index twisted $K$-theory by elements of this smaller group. There is
an obstruction to doing so however as we shall see. If we apply Proposition
1.2 to $\mathbb{C}$-bundles (if $\left[  \lambda\right]  $ is a torsion class)
or to $\mathcal{A}$-bundles in general, we see that if $\mu$ is cohomologous
to $\lambda,$ the equivalence $\Theta$ in this last proposition, between the
categories of $\lambda$-twisted bundles and $\mu$-twisted bundles, depends on
the choice of a cochain $\eta$ such that
\[
\mu_{kji}=\lambda_{kji}\cdot\eta_{ji}\cdot\eta_{ki}^{-1}\cdot\eta_{kj}.
\]
If $\eta^{\prime}$ is another choice, $\eta_{ji}.\eta_{ji}^{\prime-1}$ is a
one-dimensional cocycle with values in $S^{1}$. Since a one-dimensional
coboundary does not change $\lambda,$ we see that the ambiguity in the
definition of the previous category equivalence lies in the cohomology
group\footnote{We assume the covering good as in \ref{fine}.} $H^{1}%
(\mathcal{U};S^{1})\cong H^{2}(X;2i\pi\mathbf{Z})\cong H^{2}(X;\mathbf{Z}).$
In particular, the definition of twisted $K$-theory with coefficients in
$H^{3}(X;\mathbf{Z})$ has a well-defined meaning only if $H^{2}(X;\mathbf{Z}%
)=0.$

This remark is also important for the definition of the product%
\[
K_{\lambda}(X)\times K_{\mu}(X)\rightarrow K_{\lambda\mu}(X)
\]
which is detailed in many ways in Section 5. The Hilbert bundle $E_{\lambda},
$ defined at the beginning of this section, depends on the cocycle $\lambda.$
It depends on its cohomology class $\left[  \lambda\right]  $ up to a non
canonical isomorphism as we have just seen (except if $H^{2}(X;\mathbf{Z})=0)
$. Therefore, strictly speaking, we cannot define in a functorial way a
cup-product%
\[
K_{\left[  \lambda\right]  }(X)\times K_{\left[  \mu\right]  }(X)\rightarrow
K_{\left[  \lambda\mu\right]  }(X).
\]
Another remark is the choice of a good covering in order to define twisted $K
$-theory via twisted bundles. There is also a functorial problem since many
choices are possible. One way to deal with this is to show that the categories
of twisted bundles associated to different coverings give the same twisted
$K$-theory if we choose two \v{C}ech cocycles which are cohomologous. This is
again included in the contents of Proposition 1.2. As we already pointed out,
this identification is not canonical, except if $H^{2}(X;\mathbb{Z})=0.$

Let us now turn our attention to the definition of the Chern character. If we
fix the good covering $\mathcal{U}$, our definition depends heavily on the
choice of a partition of unity $(\alpha_{i}).$ If $(\beta_{i})$ is another
choice, there is a homotopy between them which is $t\longmapsto(1-t)\alpha
_{i}+t\beta_{i}.$ If $\lambda$ is a completely normalized $2$-cocycle with
values in $S^{1},$ the associated closed differential forms $\theta\alpha$ and
$\theta\beta$ are homotopic and therefore cohomologous: they define the same
class in $H^{3}(X;i\mathbb{R}).$ However, it is not completely obvious that
the associated twisted cohomologies $H_{\theta\alpha}^{ev}(X)$ and
$H_{\theta\beta}^{ev}(X)$ are isomorphic in a way compatible with the Chern
character. One way to deal with this problem is to consider $\lambda$-twisted
bundles over $X\times\left[  0,1\right]  $ with the partition of unity given
by $(1-t)\alpha_{i}+t\beta_{i}$ as above. We then have a commutative diagram
where the horizontal arrows are isomorphisms%
\[%
\begin{array}
[c]{ccccc}%
K_{\lambda}(X\times\left\{  0\right\}  ) & \longleftarrow & K_{\lambda
}(X\times\left[  0,1\right]  ) & \longrightarrow & K_{\lambda}(X\times\left\{
1\right\}  )\\
\downarrow &  & \downarrow &  & \downarrow\\
H_{\theta\alpha}^{ev}(X\times\left\{  0\right\}  ) & \longleftarrow &
H_{\theta}^{ev}(X\times\left[  0,1\right]  & \longrightarrow & H_{\theta\beta
}^{ev}(X\times\left\{  1\right\}  ).
\end{array}
\]
This diagram shows that the Chern character does not depend on the choice of
partition of unity up to canonical isomorphisms given by the horizontal arrows.

We cannot expect the Chern character to be functorial with respect to the
cohomology class of $\lambda$ in $H^{3}(X;\mathbb{Z}).$ However, it is
\textquotedblleft partially functorial\textquotedblright\ in the following
sense: if we choose a good refinement $\mathcal{V}=(V_{s})$ of $\mathcal{U}%
=(U_{i})$ as in Section 1, any restriction map of type
\[
\Theta_{\tau}:K_{\lambda}(\mathcal{U})\rightarrow K_{\mu}(\mathcal{V})
\]
(where $V_{s}\subset U_{\tau(s)})$ is an isomorphism. This isomorphism is not
unique and depends on $\tau,$ as it was pointed out in the proof of Propositon
1.3. If $(\beta_{s})$ if a partition of unity associated to the covering
$\mathcal{V}$ and $(\alpha_{i})$ a partition of unity associated to the
covering $\mathcal{U}$, the functions $(\alpha_{i}\cdot\beta_{s})$ define a
partition of unity associated to $\mathcal{U\cap V}$ which is just a
reindexing of the covering $\mathcal{V}$. On the other hand, we may also
reindex $\mathcal{U}$, in such a way that the functions $(\alpha_{i}\cdot
\beta_{s})$ define also a partition of unity of $\mathcal{U}$. Since the
twisted cohomology is homotopically invariant, it follows that the
\textquotedblleft restriction map\textquotedblright%
\[
H_{\overline{\lambda}}^{ev}(X)\rightarrow H_{\overline{\mu}}^{ev}(X)
\]
is also well defined and that the diagram%

\[%
\begin{array}
[c]{ccc}%
K_{\lambda}(\mathcal{U}) & \rightarrow & K_{\mu}(\mathcal{V})\\
\downarrow &  & \downarrow\\
H_{\overline{\lambda}}^{ev}(X) & \rightarrow & H_{\overline{\mu}}^{ev}(X)
\end{array}
\]
is commutative (with the notations of Theorem $7.2)$.

\bigskip

\end{document}